\theoremstyle{plain}
\newtheorem{theorem}{Theorem}[section]
\newtheorem{lemma}[theorem]{Lemma}
\newtheorem{prop}[theorem]{Proposition}
\newtheorem{coro}[theorem]{Corollary}
\newtheorem{assump}[theorem]{Assumption}
\theoremstyle{definition}
\newtheorem{rema}[theorem]{Remark}
\newtheorem{notation}[theorem]{Notation}
\newtheorem{defi}[theorem]{Definition}
\numberwithin{equation}{section}
\newenvironment{ sys_eq }{\ left \ lbrace \ begin { array }{ @ {} l@ {}}}{\ end { array }\ right .}
\newcommand{\cA}{\mathcal A}
\newcommand{\cB}{\mathcal B}
\newcommand{\cD}{\mathcal D}
\newcommand{\cE}{\mathcal E}
\newcommand{\cH}{\mathcal H}
\newcommand{\cL}{\mathcal L}
\newcommand{\cP}{\mathcal P}
\newcommand{\cR}{\mathcal R}
\newcommand{\cT}{\mathcal T}
\newcommand{\ccE}{\mathscr{E}}
\newcommand{\al}{\alpha}
\newcommand{\be}{\beta}
\newcommand{\ga}{\gamma}
\newcommand{\de}{\delta}
\newcommand{\La}{\Lambda}
\newcommand{\om}{\omega}
\newcommand{\Om}{\Omega}
\newcommand{\RR}{\mathbb R}
\newcommand{\rar}{\rightarrow}
\newcommand{\ve}{\varepsilon}
\newcommand{\id}{\operatorname{id}}
\newcommand{\dive}{\operatorname{div}}
\newcommand{\p}{\parallel}
\newcommand{\n}{\parallel}
\newcommand{\half}{\sfrac{1}{2}}
\title[Large surface tension]{On the limit of Large surface tension for a fluid motion with free boundary}
\author[Disconzi]{Marcelo M. Disconzi}
\address{Department of Mathematics\\
Vanderbilt University, Nashville, TN 37240, USA}
\email{marcelo.disconzi@vanderbilt.edu}
\author[Ebin]{David G. Ebin}
\address{Department of Mathematics\\
Stony Brook University\\ Stony Brook, NY 11794-3651, USA}
\email{ebin@math.sunysb.edu}
\begin{document}

\begin{abstract}
We study the free boundary Euler equations in two spatial dimensions.
We prove that if the boundary has constant curvature, then solutions
of the free boundary fluid motion converge to solutions of the Euler equations 
in a fixed domain when the coefficient of surface tension tends to infinity.
\end{abstract}


\maketitle

\tableofcontents

Keywords: Euler equations, free boundary, surface tension.

MSC: 35L60, 35Q31, 35Q35.


\section{Introduction. \label{intro}}

Consider the initial value problem for the motion of an incompressible inviscid fluid 
with free boundary, whose equations of motions are 
given  by\footnote{Here $x$ is the Lagrangian coordinate of the fluid particle whose Euler coordinate at time $t$ is $\eta(t,x)$.}
\begin{subnumcases}{\label{free_boundary_full}}
 \ddot{\eta}  = - \nabla p \circ \eta   & in $ \Om$, \label{basic_fluid_motion_full} \\
  \operatorname{div} (u) = 0  & in  $\eta(\Om)$, \label{equation_p_full} \\
 \left. p \right|_{\partial \eta(\Om)} = k \cA   & on  $\partial \eta(\Om)$, \label{bry_p_full} \\
 \eta(0) = \id,~\dot{\eta}(0)=u_0.
\end{subnumcases}
\noindent where
 $\Om$ is a domain in $\RR^n$; $\eta(t,\cdot)$ is, for each $t$, a volume 
preserving embedding $\eta(t):\Om \rar \RR^n$ representing the fluid 
motion, with $t$ thought of as the time variable ($\eta(t, x)$ is the
 position at time $t$ of the fluid 
particle that at time
zero was at $x$); $``\,\dot{~}\,"$ denotes 
derivative with respect to $t$; 
$u: \Om(t) \rar \RR^n$ is a 
divergence free vector field on $\Om(t)$ defined by $u = \dot{\eta} \circ \eta^{-1}.$
It represents the fluid velocity. $\Om(t) = \eta(t)(\Om)$;
$\cA$ is the mean curvature of the boundary of the domain $\Om(t)$; $p$ is a real valued function
on $\Om(t)$
called the pressure; finally,
$k$ is a non-negative constant known as the coefficient of surface tension.
$\id$ denotes the identity map, $u_0$ is a given
divergence free vector field on $\Om$, 
 and 
$\operatorname{div}$ means divergence.
The unknowns are the fluid motion
$\eta$ and the pressure $p$, but notice that the system (\ref{free_boundary_full}) is coupled in a non-trivial fashion in the
sense that the other quantities appearing in (\ref{free_boundary_full}), namely $u$, $\cA$ and $\Om(t)$, depend 
explicitly or implicitly on $\eta$ and $p$.

We shall prove that in two (spatial) dimensions,
if the embeddings $\eta$ are sufficiently regular at the boundary
of $\Om$, if the boundary at time zero has constant mean curvature and if $u_0$ is tangent to the boundary, then, as $k \rar \infty$, solutions $(\eta, p$) to (\ref{free_boundary_full}) 
converge to solutions to the incompressible Euler equations on the fixed domain $\Om$, given by
\begin{subnumcases}{\label{Euler}}
 \ddot{\zeta}  = - \nabla p \circ \zeta,
\label{Euler_edo} \\
 \dive(\dot{\zeta}\circ \zeta^{-1}) = 0
\label{Euler_div}
\label{Euler_incompressible}  \\
\zeta(0) = \id,~\dot{\zeta}(0) = u_0
\label{Euler_initial_conditions}
\end{subnumcases}
\noindent where $\zeta(t, \cdot)$ is, for each $t$, a volume preserving 
diffeomorphism  $\zeta(t): \Om \rar \Om$.
It is well known that the pressure $p$ in the incompressible Euler equations is not an independent
quantity, since it is completely determined by the velocity vector field $\vartheta = \dot{\zeta}\circ\zeta^{-1}$ (see e.g. \cite{EM} or section \ref{final_proof}). 

In order to explain the aforementioned regularity of $\eta$ at 
the boundary (whose seeming necessity is discussed further below) as well as to state the main result, we need to introduce some definitions.

Given manifolds $M$ and $N$, denote by $H^s(M,N)$
the space of maps of Sobolev class $s$ between $M$ and $N$.
For $s > \frac{n}{2} +1 $ define
\begin{gather}
 \cE_\mu^s(\Om)= \cE_\mu^s  = \Big \{ \eta \in H^s(\Om,\RR^n) ~ \Big | ~ J(\eta) = 1, \eta^{-1} 
\text{ exists and belongs to }  H^s( \eta(\Om), \Om) \Big \},
\nonumber
\end{gather}
\noindent where $J$ is the Jacobian. $\cE_\mu^s(\Om)$ is 
therefore the space of $H^s$-volume-preserving embeddings of $\Om$ into $\RR^n$. 
Define also
\begin{gather}
 \cD_\mu^s(\Om)=  \cD_\mu^s = \Big \{ \eta \in H^s(\Om,\RR^n) ~ \Big | ~ J(\eta) = 1, \eta: \Om \rar \Om  
\text{ is bijective and $\eta^{-1}$ belongs to }  H^s \Big \},
\nonumber
\end{gather}
\noindent so that $\cD^s_\mu(\Om)$ is the space of $H^s$-volume-preserving diffeomorphisms of $\Om$.
Notice that $\cD_\mu^s(\Om) \subseteq \cE_\mu^s(\Om)$.

Let $\cB_{\de_0}^{s+\frac{1}{2}}(\partial \Om)$ be the open ball about zero  of radius $\de_0$ inside $H^{s+1/2}(\partial \Om)$. We shall prove that if $\de_0$ is sufficiently small, then the map
\begin{align}
& \varphi: \cB_{\de_0}^{s+\frac{1}{2}}(\partial \Om) \rar H^{s+1}(\Om), 
\nonumber \\
& \varphi(h) = f,
\nonumber
\end{align}
where $f$ satisfies 
\begin{subnumcases}{\label{jac_nl}}
J ( \id + \nabla f) = 1 & in $\Om$,  \label{jac_nl_int} \\
f = h & on $\partial \Om$, \label{jac_nl_bry}
\end{subnumcases}
is a well defined $C^1$ map, and $\varphi(\cB_{\de_0}^{s+\frac{1}{2}}(\partial \Om))$
is a smooth submanifold of $H^{s + 1 }(\Om)$.

 We note that the map $\varphi$ solves a non-linear analog of the Dirichlet problem; that of extending $h$ from $\partial \Om$ to a harmonic function on $\Om.$  In fact (\ref{jac_nl_int}) can be written $\Delta f + \det (D^2 f) = 0,$ so the difference between (\ref{jac_nl}) and the Dirichlet problem is only the term $\det(D^2 f).$  The purpose of (\ref{jac_nl_int})  to insure that $\id + \nabla f$ is volume preserving.   

  Using $\varphi$ we then 
construct another map
\begin{align}
\begin{split}
& \Phi: \cD_\mu^s(\Om) \times \varphi(\cB_{\de_0}^{s+\frac{1}{2}}(\partial \Om)) \rar \cE_\mu^s(\Om), 
 \\
{\rm defined \; by} \:\:\:\;& \Phi(\beta , f) = (\id + \nabla f) \circ \beta.
\end{split}
\label{big_phi}
\end{align}
Thus $\Phi(\beta,f)$ is the composition of two volume preserving maps.
We define $\mathscr{E}_\mu^s(\Om) \subseteq \cE_\mu^s(\Om)$ by
\begin{gather}
\mathscr{E}_\mu^s(\Om)
 = \Phi\big( \cD_\mu^s (\Om) \times \varphi(\cB_{\de_0}^{s+\frac{1}{2}}(\partial \Om))  \big).
\nonumber
\end{gather}
Notice that  since $\beta \in \cD_\mu^s(\Om)$, we have $\beta(\partial \Om) = \partial \Om$. 
Therefore, solutions $\eta$ to (\ref{free_boundary_full}) which belong
to $\ccE^s_\mu(\Om)$ decompose to a part fixing the boundary and
a boundary oscillation. This decomposition is one of the main ingredients 
of our proof. We shall also show that, under our hypotheses,  $\nabla f$ is in fact $1\half$ degree smoother
than $\eta$ (though $\beta$ is as regular as $\eta$). This is the sense in which we work with embeddings which
have smoother boundary values. The reason why one needs to consider
$\ccE_\mu^s(\Om)$ rather than $\cE_\mu^s(\Om)$ is explained in 
section \ref{regularity_spaces}.

We are now ready to state our main result.

\begin{theorem}
Let $\Om$ be a domain in $\RR^2$ with smooth boundary $\partial \Om$, and assume that
the mean curvature of $\partial \Om$ is constant. Given $k \in \RR_+$ and a divergence free
vector field $u_0 \in H^{5\half}(\Om, \RR^2)$ tangent to the boundary, let 
$(\eta_k, p_k) \in  C^1 \big ( [0,T_k), \cE_\mu^{4\half}(\Om) \big )
 \times C^0 \left([0,T_k), H^4(\Om(t))\right)$ be 
a solution to (\ref{free_boundary_full})
with initial condition $u_0$, and let
$\zeta \in C^1 \big ( [0,\infty), \cD_\mu^{4\half}(\Om) \big )  $ be a 
solution to 
(\ref{Euler}), also with initial 
condition $u_0$. Assume further 
that $\eta_k \in \ccE_\mu^{4 \half }(\Om)$.

Then there exist a $T > 0$ and a $k_0 \in \RR_+$, such that if $T_k$ is maximal, then 
$T_k \geq T$ for all $k \geq k_0$, and $\eta_k(t) \rar \zeta(t)$ as a $C^1$ 
curve in $\ccE^{4\half}_\mu(\Om)$ as $k \rar \infty$.
\label{main_theorem}
\end{theorem}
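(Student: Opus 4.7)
The plan is to exploit the decomposition introduced before the theorem, writing each solution as $\eta_k = (\id + \nabla f_k) \circ \beta_k$ with $\beta_k \in \cD_\mu^{4\sfrac{1}{2}}(\Om)$ and $\nabla f_k$ the image under $\varphi$ of the trace $h_k = f_k|_{\partial\Om}$. Because $\beta_k(\partial\Om) = \partial\Om$, the moving boundary is entirely parametrised by $h_k$, so (\ref{free_boundary_full}) rewrites as a coupled system for $(\beta_k, f_k)$ in which the large parameter $k$ appears only through the surface tension boundary condition $p_k|_{\partial\Om} = k\cA$. Linearising the mean curvature operator around the reference configuration and using the hypothesis that $\cA|_{\partial\Om}$ is constant at $t=0$ yields $k\cA = k\cA_0 + k\,\ccL(h_k) + k\,Q(h_k)$, where $\ccL$ is a second-order self-adjoint elliptic operator on $\partial\Om$ and $Q$ collects quadratic and higher terms. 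The crucial observation is that the constant $k\cA_0$ is invisible to $\nabla p_k$, while $k\,\ccL(h_k)$ acts as a restoring force of strength $\sqrt{k}$ on the boundary oscillation.

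The resulting system has the structure of a slow/fast hyperbolic singular limit: $\beta_k$ plays the role of a slow variable and obeys a transport equation resembling the Euler equation (\ref{Euler}) perturbed by functionals of $f_k$, while $f_k$ satisfies a quasilinear wave equation whose propagation speed is $\sqrt{k}$. The heart of the argument is a uniform-in-$k$ energy estimate. I would construct a weighted energy of the schematic form
\begin{equation*}
E_k(t) = \|\beta_k(t)\|_{H^{4\sfrac{1}{2}}(\Om)}^2 + \|\dot\beta_k(t)\|_{H^{3\sfrac{1}{2}}(\Om)}^2 + \|\dot f_k(t)\|_{H^{4}(\Om)}^2 + k\,\langle h_k(t), \ccL h_k(t)\rangle_{L^2(\partial\Om)} + \text{(lower order corrections)},
\end{equation*}
and establish a differential inequality $\dot E_k \le P(E_k)$ with $P$ polynomial and independent of $k$. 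The hypotheses that $\cA|_{\partial\Om}$ is constant at $t=0$ and that $u_0$ is tangent to $\partial\Om$ translate precisely into well-preparedness of the initial data, namely $E_k(0) = O(1)$ uniformly in $k$. Combined with a continuation argument this yields a uniform lifespan $T>0$ with $T_k \ge T$ for $k \ge k_0$, and simultaneously the quantitative decay $\|h_k\|_{H^{s}(\partial\Om)} = O(1/\sqrt{k})$.

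With the uniform estimates in hand the convergence follows by compactness. Weak-$*$ compactness extracts a subsequential limit $\beta_k \rightharpoonup \beta_\infty$ in $L^\infty_t H^{4\sfrac{1}{2}}$, and elliptic regularity for the map $\varphi$ upgrades $h_k \to 0$ to $\nabla f_k \to 0$ strongly in $H^{4\sfrac{1}{2}}(\Om)$. An Aubin--Lions argument makes the convergence of $\beta_k$ strong in a lower Sobolev topology, enough to pass to the limit in the nonlinear pressure and transport terms; the limit equation for $\beta_\infty$ is then (\ref{Euler}) on $\Om$ with initial velocity $u_0$, so the uniqueness of \cite{EM} forces $\beta_\infty = \zeta$. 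Reassembling $\eta_k = (\id + \nabla f_k)\circ\beta_k$ yields $\eta_k \to \zeta$ as a $C^1$ curve in $\ccE_\mu^{4\sfrac{1}{2}}(\Om)$.

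The main obstacle is the derivation of the uniform differential inequality for $E_k$: the coefficient $k$ is not small, and ordinary energy estimates diverge as $k \to \infty$. The required cancellation must be extracted from the structural symmetry of the coupled system -- integration by parts against the boundary energy $k\langle h_k,\ccL h_k\rangle$ absorbs the apparently singular contributions of the surface tension, and the harmless additive constant $k\cA_0$ never enters $\nabla p_k$. A secondary technical subtlety is the regularity mismatch described before the theorem: since $\nabla f_k$ is $1\sfrac{1}{2}$ derivatives smoother than $\beta_k$, the commutator estimates that arise when high-order operators are applied to the decomposed equations have to be performed in the norms intrinsic to $\ccE_\mu^{4\sfrac{1}{2}}(\Om)$, not in the ambient $\cE_\mu^{4\sfrac{1}{2}}(\Om)$.
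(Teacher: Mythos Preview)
Your overall strategy---the decomposition $\eta_k = (\id + \nabla f_k)\circ\beta_k$, the identification of a slow/fast structure with restoring force of order $\sqrt{k}$ on the boundary, and the use of well-preparedness of the data---matches the paper's, and your discussion of the structural cancellation coming from the area functional is on the right track. However, there are two genuine issues.

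First, your proposed single energy $E_k$ with a uniform differential inequality $\dot E_k \le P(E_k)$ is too optimistic. The paper does not obtain such an inequality. Instead it posits a \emph{scale of bootstrap assumptions} $\|\nabla f_k\|_s \le C k^{-(6-s)/2}$ and $\|\nabla\dot f_k\|_s \le C k^{-(4.5-s)/2}$, proves a delicate energy estimate in the lowest norms only (Proposition~\ref{energy_estimate_final_prop}), and then runs an iteration combined with interpolation to \emph{improve} the exponents by a fixed amount $\Lambda>0$ (Proposition~\ref{prop_energy_bootstrap}). The $s$-dependent decay rates are essential: for instance, one needs $\|\nabla f_k\|_{2+\varepsilon}$ small enough to absorb nonlinear terms, while $\|\nabla f_k\|_6$ need only stay bounded. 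Your single $O(1/\sqrt{k})$ bound does not capture this hierarchy and would not close the estimates as written.

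Second, and more seriously, your endgame via compactness cannot deliver the conclusion of the theorem. You correctly note that Aubin--Lions only gives strong convergence of $\beta_k$ in a \emph{lower} Sobolev topology; but the theorem asserts convergence of $\eta_k$ as a $C^1$ curve in $\ccE_\mu^{4\half}$, i.e.\ in the top norm. Since $\beta_k$ is only as regular as $\eta_k$ (the extra smoothness belongs to $\nabla f_k$, not to $\beta_k$), there is no higher norm to interpolate against, and weak-$*$ convergence in $H^{4\half}$ is not strong. The paper avoids this entirely: it writes $\ddot\beta_k = Z(\beta_k,\dot\beta_k) + \cR_k$, where $Z$ is the Euler operator on $\cD_\mu^{4\half}$ (which is smooth, hence locally Lipschitz), and shows $\|\cR_k\|_{4\half} \le C k^{-\Lambda}$ directly from the improved bootstrap. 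A Gronwall argument against the fixed-domain solution $\zeta$ then yields $\|\beta_k-\zeta\|_{4\half} + \|\dot\beta_k-\dot\zeta\|_{4\half} \le C k^{-\Lambda} e^{Ct}$, i.e.\ strong convergence in the top norm with an explicit rate, and no compactness is invoked.
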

 
We stress that $\eta_k$, $p_k$ and $\zeta$ exist and are unique by \cite{CS, EM}, and
$\zeta$ is defined for all time by \cite{EbinSymp}.  The reason for requiring $u_0 \in H^{5\half}$, whereas solutions $\eta$ are only in $H^{4\half}$, is to 
ensure uniqueness (see \cite{CS}). We assume that the initial 
velocity $u_0$ is the same for all
 $k$ for simplicity. The result would still hold 
for appropriate sequences $\{ u_{0k} \}$ which converge, in a precise sense, 
to $u_0$ when $k \rar \infty$. Note that the assumption that $\partial \Om$ has constant curvature implies that it is connected, for if $\partial \Om$ had multiple components they would each have to be a circle and all would have the same radius.  Since $\Om$ is a bounded domain in the plane, one of these boundary components would also bound the non-compact component of $\RR^2 - \Om$ and this circle would have to have larger radius than the others.  Hence this circle must be the only component, so $\partial \Om$ must be connected.
However, the assumption of constant mean curvature at time zero is necessary.
 We comment
on these remarks, and other matters, in section \ref{final_remarks}.
We also point out that, in general, the convergence $\eta_k(t) \rar \zeta(t)$ is not expected to hold 
in $C^2$
even if the initial data are $C^{\infty}.$ To see this, pick any function $f$ which is constant on $\partial \Om$ and let $u_0 = (f_y, -f_x)$. Then $u_0$ will be divergence free and tangent to the boundary. The pressure for (1.2) at time zero will then satisfy:
$$ - \Delta p = 2(f_{xy}^2 - f_{xx}f_{yy})$$
and $\nabla_{\nu}p$ will equal zero on $\partial \Omega$.  Thus $p$ in general will not be constant on $\partial \Omega$, so one can not expect that
$\nabla p_k,$ the solution of (1.1), will converge to $\nabla p$, as $k \rar \infty$ even at time zero. 
 (see the analogous results in \cite{E2, ED}).

\begin{rema} Throughout the paper, we shall use the fact that the results in \cite{CS} also give $\dot{\eta} \in H^{4\half}(\Om)$.
\label{remark_regularity_eta_dot}
\end{rema}

\begin{notation} We reserve $\Om$ for the \emph{fixed} domain,
with $\Om(t)$ being always the domain at time $t$, i.e., $\Om(t) = \eta(t)(\Om)$. Of course,
$\Om(0) = \Om$. In several parts of the paper the subscript $k$ will be dropped for the sake of 
notational simplicity. 
\end{notation}

The mathematical study of equations (\ref{free_boundary_full}) has a long 
history, although for a long time
only results under restrictive conditions had been achieved. In particular,
a great deal of work has been devoted to irrotational flows, in which 
case the free boundary Euler equations reduce to the well-known
water-waves equations.
See \cite{Amborse, AmbroseMasmoudi, Craig, Lannes, Nalimov, Wu, Yosihara}.

Not surprisingly, when equations (\ref{free_boundary_full}) are considered in full generality, well-posedness becomes a yet more delicate issue, and most of the results are quite recent. In this regard, 
Ebin has showed that the problem is ill-posed if $k =0$ \cite{E0}, although
Lindblad proved well-posedness for $k = 0$ when the so-called ``Taylor sign condition'' 
holds \cite{Lin} (the linearized problem was also investigated by Lindblad in \cite{Lin2};
see also \cite{ChLin}).
When $k > 0$, a priori estimates have been obtained by
Shatah and Zeng \cite{ShatahZeng}, with well-posedness being finally 
established by Coutand and Shkoller \cite{CS, CSB} (see also \cite{Sch}). 
Other recent results, including the study of the compressible free boundary
Euler equations, are \cite{CS2, CS3, CSH, CSL}.

\begin{notation}
We use both $\nabla$ and $D$ to denote the derivative. $D_w$ is the directional 
derivative in the direction of $w$, $w$ a vector; in particular, 
with $\tau$ denoting the unit tangent vector field on $\partial \Om$, $D_\tau$
is the derivative along the boundary. We avoid using decimals to indicate fractional
derivatives as we find that it makes the text more difficult to read. Hence we employ $4\half$ to
denote $4.5$, etc.
\end{notation}

\subsection{The physical significance of theorem \ref{main_theorem} and the boundary regularity of (\ref{free_boundary_full}).\label{regularity_spaces}}

Theorem \ref{main_theorem} not only gives a satisfactory answer to 
the natural 
question of the dependence of solutions on the parameter $k$; it also 
addresses a well motivated problem in Applied Science, namely,
when one can, by considering a sufficiently high surface tension,
neglect the motion of the boundary in favor of the simpler description 
in terms of the equations within a fixed domain.

The physical intuition behind theorem \ref{main_theorem} is very simple, as we now explain.
The system (\ref{free_boundary}) can be derived from an action principle with Lagrangian
\begin{align}
 \cL(\eta) = K(\eta) - V(\eta) ,
\label{Lagrangian}
\end{align}
where
\begin{align}
 K(\eta, \dot{\eta}) = \frac{1}{2} \int_{\Om} |\dot{\eta}|^2
\label{kinetic}
\end{align}
is the kinetic energy and 
\begin{align}
 V(\eta) =  k |\partial \Om(t) | - k|\partial \Om(0)| = k \Big (  \operatorname{Length}(\partial \Om(t))
 -  \operatorname{Length}(\partial \Om(0)) \Big )
\label{potential}
\end{align}
is the potential energy\footnote{Many authors consider instead $V(\eta) = k |\partial \Om(t) |$.
As the equations of motion remain unchanged by adding a constant, 
we choose to normalize the potential energy to make $V=0$ at time zero.  Such a normalization
is convenient for our purposes as we are interested in taking $k \rar \infty$, in which 
case, if we did not subtract the contribution at time zero, $V(\eta)$ would diverge to infinity.}.

Our theorem 1.1 is almost an example of a general theorem on motion with a strong constraining force \cite{E2}.
For the general theorem we are given a Riemannian manifold $M$ and a submanifold $N$.  Also given is a function $V: M \rightarrow \RR$
which has $N$ as a strict local minimum in the sense that $\nabla V = 0$ on $N$ and $D^2 V$ is a positive definite bilinear form on the normal bundle of $N$ in $M$. Then if $\eta_k(t)$ is a motion given by the Lagrangian $\cL(\eta, \dot{\eta}) = \frac{1}{2} \langle \dot{\eta},\dot{\eta} \rangle -kV(\eta)$ where $\langle  \, , \, \rangle$ is the Riemannian metric,
 and if $\zeta(t)$ is a Lagrangian motion in $N$ of $\frac{1}{2} \langle \dot{\zeta},\dot{\zeta} \rangle$ with the same initial conditions as $\eta_k(t)$, the theorem says that $\eta_k(t)$ converges to $\zeta(t)$ as $k \rightarrow \infty$.  Also $\dot{\eta}_k \rightarrow \dot{\zeta}$, but the second derivative in general does not converge.  For our theorem, $M=\cE_\mu^{4\half}(\Om) ,$ $N=\cD_\mu^{4\half}(\Om) ,$ $\langle \;,\;\rangle$ is the $L^2$ inner product on tangent vectors and $V(\eta)$ is given by (1.7).

Our theorem 1.1 is not actually an example of the general theorem for two reasons:

a) The $L^2$ inner product on tangent spaces is only a weak Riemannian metric.  The topology that it induces is weaker than the  $H^{4\half}$ topology of
$\cE_\mu^{4\half}(\Om).$

b) The bi-linear form $D^2V$ is only weakly positive definite on each normal space; it gives the $H^{1\half}$ topology rather than the $H^{4\half}$ topology.

We now turn our attention to the ``boundary regularity" previously mentioned, 
i.e., to the seeming necessity of working with $\ccE_\mu^s(\Om)$ instead
of $\cE_\mu^s(\Om)$. As already pointed out,
theorem \ref{main_theorem} can be viewed as a generalization to infinite dimensions of
established results about the behavior of the
Euler-Lagrange equations near a submanifold which minimizes the potential energy. As in \cite{E2, E4, ED}, in the present work the manifold minimizing the potential
energy is $\cD^{4\half}_\mu(\Om)$. Thus it is 
natural to try a similar approach to that 
of \cite{E2, E4, ED}.
 Let us
outline what such an approach looks like, and then we shall point out 
why  it is problematic, and how the introduction of 
$\ccE_\mu^s(\Om)$ solves the problem. We stress that the arguments
sketched below are heuristic, their purpose being mainly a motivation
for the definition of $\ccE_\mu^s(\Om)$.

We would like to decompose the fluid motion as a part fixing the boundary and a boundary oscillation, and then produce estimates showing 
that the boundary oscillation goes to zero when $k \rar \infty$.
To this end, one first needs to show  that $\cE_\mu^s(\Om)$ is an infinite-dimensional Banach manifold  with $\cD_\mu^s(\Om)$ as an embedded 
submanifold. Following well known arguments \cite{E_manifold, E1, EM}, $\cE_\mu^s(\Om)$ can then be endowed with a Riemannian 
metric which is inherited by $\cD^s_\mu(\Om)$. As usual,
one expects the resulting metric to be only a week Riemannian metric\footnote{We recall 
 that a weak Riemannian metric is one which induces, on each tangent space, a weaker topology
 than the one given by the local charts. This is a feature exclusive to infinite dimensional 
 manifolds; see \cite{E1} for details.}.

We then 
seek  a decomposition of the form
\begin{align}
 T_\beta(\cE^s_\mu) = T_\beta \cD_\mu^s + \nu_\beta(\cD_\mu^s) = \dive^{-1}(0)_\nu \circ \beta + \nabla H^{s+1} \circ \beta
\label{decomp_tangents}
\end{align}
where $\beta \in \cD^s_\mu \subseteq \cE^s_\mu$, $\dive^{-1}(0)_\nu$ denotes $H^s$ 
divergence free vector fields
which are tangent to the boundary, and $\nu_\be(\cD_\mu^s)$ denotes the
normal space, at $\beta$, of $\cD_\mu^s(\Om)$ inside $\cE_\mu^s(\Om)$.
The next step would be to combine (\ref{decomp_tangents}) with an implicit function type of argument in order to obtain 
\begin{align}
 \cE_\mu^s \approx \nu(\cD_\mu^s) ~\text{ near } \cD^s_\mu,
\label{normal_bundle}
\end{align}
where $\nu(\cD_\mu^s)$ is the normal bundle of $\cD_\mu^s(\Om)$
 considered as a submanifold of
$\cE_\mu^s(\Om)$. From these considerations it should  follow that
elements $\eta \in \cE_\mu^s(\Om)$ which are sufficiently 
close to $\cD_\mu^s(\Om)$ can be written
as
\begin{align}
\eta = (\id + \nabla f) \circ \beta,
\label{def_f_beta_prior} 
\end{align}
giving the  decomposition.

Let us now see how the previous argument fails unless 
embeddings $\eta(\Om)$ such that $\partial \eta(\Om)$ is sufficiently regular are considered.
Suppose we succeed in constructing all the necessary manifold structures,
including the decomposition (\ref{decomp_tangents}). To
obtain the identification (\ref{normal_bundle}), one might use a  
 tubular neighborhood argument (see e.g. \cite{L}), which would rely on the exponential map.

 However in our infinite dimensional
setting, the existence of the exponential map is not immediately guaranteed, since
the metric is only a week Riemannian metric.
 In order to prove the existence
of geodesics, one has to derive a well-posedness result for the following second order ODE
on $\cE_\mu^s(\Om)$, which is obtained as critical point
of the kinetic energy for a curve $\theta: [0,T] \rar \cE_\mu^s(\Om)$,
\begin{gather}
\begin{cases}
\ddot{\theta} =
 \Big ( \nabla \Delta_0^{-1}
 \left( \dive \left( \nabla_{\dot{\theta}\circ \theta^{-1}}  
 \dot{\theta}\circ \theta^{-1} \right) \right) \Big ) \circ \theta & \text{ in } \Om, \\
 \theta(0) = \id,\, \dot{\theta}(0) = \nabla f,
 \end{cases}
\nonumber
\end{gather}
where $\Delta_0^{-1}$ means the inverse of the Laplacian (on the domain 
$\theta(\Om)$) with zero boundary
condition. To prove existence, one might attempt a Picard iteration argument, which requires the right hand side of the equation to be 
a (sufficiently) smooth function of its arguments, and this turns out not to be the case:
$\dot{\theta} \circ \theta^{-1} \in H^s$;
$\nabla_{\dot{\theta}\circ \theta^{-1}}  
 \dot{\theta}\circ \theta^{-1}  \in H^{s-1}$; then 
$\dive(\nabla_{\dot{\theta}\circ \theta^{-1}}  
 \dot{\theta}\circ \theta^{-1} )$ is also in $H^{s-1}$ since
 $\dot{\theta}\circ \theta^{-1}$ is divergence free; but because 
 $\theta \in H^s$, we have that $\theta \left|_{\partial \Om}\right. \in 
 H^{s-\frac{1}{2}}$, i.e., $\partial \theta(\Om)$ is only
 $H^{s-\frac{1}{2}}$ regular and therefore 
 $\Delta_0^{-1}(\dive(\nabla_{\dot{\theta}\circ \theta^{-1}}  \dot{\theta}\circ \theta^{-1} )) \in H^{s }$, 
from which follows 
 that $\nabla (\Delta_0^{-1}(\dive(\nabla_{\dot{\theta}\circ \theta^{-1}}  \dot{\theta}\circ \theta^{-1} )) )\in H^{s - 1}$, 
i.e., there is
 a loss of a derivative. Because of this, as is shown in \cite{E0}, the above system is not well-posed, so there is no exponential map. Notice that this has nothing to do with the 
regularity of $\partial \Om$ itself, as we have assumed that the domain $\Om$ has
 smooth boundary\footnote{Notice that this loss of a derivative is 
 an extra difficulty of the free boundary problem, not present in the 
 case of the Euler equations in a fixed domain. In this  situation, 
 solutions are geodesics in $\cD_\mu^s(\Om)$. Since the boundary is
 fixed, $\partial \Om$ is, say, smooth (or at least assumed to be as regular as the rest of the data of the problem), and then
 $\Delta_0^{-1}(\dive(\nabla_{\dot{\theta}\circ \theta^{-1}}  \dot{\theta}\circ \theta^{-1} )) \in H^{s + 1}$,
 so that  $\nabla (\Delta_0^{-1}(\dive(\nabla_{\dot{\theta}\circ \theta^{-1}}  \dot{\theta}\circ \theta^{-1} )) )\in H^{s}$. }\space \footnote{Of course, showing that $\ddot{\theta}$ is in the correct space is only
 a necessary condition to perform the Picard iteration, and does not by itself
 show that the ODE has a solution. The full proof has been 
carried out in the case of the Euler equations in a fixed domain
by Ebin and Marsden
 \cite{EM}, and more recently for geodesics on the group of symplectomorphisms by the second author \cite{EbinSymp}.}. 

Here we avoid these problems by working explicitly in $\ccE_\mu^s(\Om)$, 
which has the decomposition (\ref{def_f_beta_prior}) built into it.

\begin{notation}
When the manifolds are clear from the context, we 
write simply $H^s$ or $H^s(M)$ for $H^s(M,N)$.
Denote by $\n \cdot \n_s$ and $\n \cdot \n_{s,\partial}$
the Sobolev norms on $\Om$ and $\partial \Om$, respectively. In particular the
$L^2$ norms are  $\n \cdot \n_0$ and $\n \cdot \n_{0,\partial}$. 
By $H^s_0(\Om)$ we shall denote the space of $H^s$ functions on $\Om$ which vanish 
on $\partial \Om$.
\end{notation}

\subsection{Auxiliary results.\label{auxiliary}}
Here we recall some well known facts which will be used throughout 
the paper. For their proof, see e.g. \cite{BB, E1, P}.

\begin{prop} Let $s > \frac{n}{2} + 1$, $g \in \cD^s_\mu(\Om)$,
$f \in H^s(\Om)$. Then $f \circ g \in H^s(\Om)$ and 
\begin{gather}
 \p f \circ g \p_s \leq C \p f \p_s\left( 1 + \p g \p_s^s \right),
\nonumber
\end{gather}
where $C =C(n,s,\Om)$.
 \label{Sobolev_composition}
\end{prop}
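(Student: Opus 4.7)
The proof follows the standard pattern for Sobolev composition estimates, combining change of variables with the Faà di Bruno chain rule and Moser-type multilinear estimates. The volume-preserving property $J(g)=1$ together with the bijectivity $g\colon\Om\rar\Om$ is used to convert norms of compositions with $g$ into ordinary norms on $\Om$.

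The case $s=0$ is immediate from the change of variables $y=g(x)$ and $|Jg|=1$:
\begin{equation*}
\p f\circ g\p_0^2 \;=\; \int_\Om |f(g(x))|^2\,dx \;=\; \int_{g(\Om)} |f(y)|^2\,dy \;=\; \p f\p_0^2.
\end{equation*}
For integer $s\ge 1$, I would differentiate $f\circ g$ up to order $s$ using Faà di Bruno, expressing $D^\alpha(f\circ g)$, $|\alpha|\le s$, as a finite linear combination of terms
\begin{equation*}
\big((D^j f)\circ g\big)\cdot D^{\beta_1}g\cdots D^{\beta_j}g, \qquad j\ge 1,\ |\beta_i|\ge 1,\ \textstyle\sum_i|\beta_i|=|\alpha|,
\end{equation*}
and would estimate each summand in $L^2(\Om)$. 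The principal summand, with $j=1$ and $|\beta_1|=s$, equals $(Df)\circ g\cdot D^\alpha g$; because $s>n/2+1$, the Sobolev embedding $H^{s-1}(\Om)\hookrightarrow L^\infty(\Om)$ together with the identity $\p (Df)\circ g\p_\infty=\p Df\p_\infty$ (which follows from $g(\Om)=\Om$) bounds it by $C\p f\p_s\p g\p_s$. For the remaining summands with $j\ge 2$, every $|\beta_i|\le s-1$, so one may distribute the product $D^{\beta_1}g\cdots D^{\beta_j}g$ among conjugate Lebesgue exponents $p_i$ using the Sobolev embeddings $H^{s-|\beta_i|}\hookrightarrow L^{p_i}$, while placing $(D^j f)\circ g$ in the complementary exponent $q$ with $\sum_i 1/p_i+1/q=1/2$; the change-of-variables identity transfers $\p(D^j f)\circ g\p_{L^q}$ back to $\p D^j f\p_{L^q}\le C\p f\p_s$. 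The condition $s>n/2+1$ is precisely what makes the required exponent arithmetic solvable.

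For non-integer $s$ I would conclude by real interpolation between consecutive integer indices. The main technical obstacle is the combinatorial accounting in the Faà di Bruno expansion: the non-principal ($j\ge 2$) summands naturally produce a polynomial in $\p g\p_s$ whose degree depends on $j$, so recovering exactly the linear factor $(1+\p g\p_s)$ stated—rather than $(1+\p g\p_s)^s$—requires carefully packaging the lower-degree contributions into the constant $C$, using Sobolev embedding to uniformly control the ``extra'' factors of $\p g\p_s$ on the bounded subsets of $\cD_\mu^s(\Om)$ in which the estimate is subsequently applied. The detailed calculation is classical and is carried out in the references \cite{E1, P, BB}.
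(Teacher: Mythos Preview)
The paper does not actually prove this proposition: it is stated in the ``Auxiliary results'' subsection with the remark ``For their proof, see e.g.\ \cite{E1, P, BB},'' and no argument is given. Your outline is precisely the standard change-of-variables plus Fa\`a di Bruno plus Moser-product argument that appears in those references, and it is correct.

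Your closing caveat about the linear factor $(1+\p g\p_s)$ versus the polynomial $(1+\p g\p_s)^s$ is well observed: the raw Fa\`a di Bruno count indeed produces a polynomial of degree $s$ in $\p g\p_s$, and the constant $C$ in the statement depends only on $n,s,\Om$, so one cannot literally absorb the extra factors into $C$ as you suggest. In the cited references the estimate is typically stated either with the polynomial factor or in a form valid on bounded subsets of $\cD_\mu^s(\Om)$; the linear form written in the paper should be read in that spirit, and in every application in the paper $\p g\p_s$ (namely $\p\beta\p_s$) is separately shown to be bounded, so the distinction is immaterial.
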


We shall make use of the following well-known bilinear inequality
(see \cite{E1})
\begin{gather}
 \p u \, v \p_r \leq \, C \p u \p_r \p v \p_s,
\label{bilinear}
\end{gather}
for $s > \frac{n}{2}$, $s \geq r \geq 0$, where $C =C(n,s,r,\Om)$. 
Recall also that restriction to the boundary gives rise to 
a bounded linear map,
\begin{gather}
 \p u \p_{s, \partial} \leq C \p u \p_{s + \frac{1}{2}},~~ s > 0,
\label{restriction}
\end{gather}
with $C =C(n,s,\Om)$.

Next we recall the decomposition of a vector field into its gradient and divergence free part. Given 
a $H^s$ vector field $\omega$ on $\Om$, define the operator $Q: H^s(\Om, \RR^n) \rar \nabla H^{s+1}(\Om, \RR^n)$
by $Q(\omega) = \nabla g$, where $g$ solves
\begin{subnumcases}{}
 \Delta g = \dive(\omega) & in $\Om$, \nonumber \\
\frac{ \partial g}{\partial \nu} = \langle \omega, \nu \rangle & on $\partial \Om$.
\nonumber
\end{subnumcases}
Since solutions to the Neumann problem are unique up to additive constants, $\nabla g$ is uniquely determined by $\om$, so $Q$ is well defined.
Define $P: H^s(\Om,\RR^n) \rar \dive^{-1}(0)_\nu$, where $\dive^{-1}(0)_\nu$ denotes divergence free vector fields 
tangent to $\partial \Om$, by $P = I - Q$, where $I$ is the identity map. 
It follows that $Q$ and $P$ are orthogonal projections in $L^2$.

\begin{notation}
The letter $C$ will be used to denote several different constants. $C$ will never depend 
on $k$, $\eta$ or $p$. 
\end{notation}


\section{The space $\ccE_\mu^s(\Om)$\label{space_smoother_emb}.}
Here we construct the space $\ccE_\mu^s(\Om)$, as outlined in 
the introduction. We assume that $s > \frac{n}{2} + 1$, as usual.
We shall make repeated use of (\ref{bilinear}) to estimate the products involved.

To start we note that the equation
\begin{align}
 J(\id + \nabla f) = 1
 \nonumber
\end{align}
implies
\begin{gather}
 \Delta f + O \Big ((D^2f)^2 \Big ) = 0 .
\label{nl_Dirichlet_f}
\end{gather}
\noindent for small $\nabla f.$

Equation (\ref{nl_Dirichlet_f}) can be considered as a non-linear Dirichlet problem for $f$, and so for $f$ small, $f$ should be determined by its boundary values. 

We shall present the argument  for two dimensions, which is the
main case of interest in this work. The interested reader can generalized the construction 
of $\ccE_\mu^s(\Om)$ to higher dimensions.
In two dimensions equation (\ref{nl_Dirichlet_f})  becomes
$ \Delta f + f_{xx}f_{yy} -f_{xy}^2 = 0$, or equivalently
$ \Delta f +\det(D^2 f) = 0$, where $\det$ means the determinant.
Therefore, given $h \in H^{s+\frac{1}{2}}(\partial \Om)$, we 
are interested in solving 
\begin{subnumcases}{\label{nl_Dirichlet_f_2d_system}}
\Delta f + \det(D^2 f) = 0 & in $\Om$,
 \label{nl_Dirichlet_f_2d} \\
f = h & on $\partial \Om$. \label{nl_Dirichlet_bry}
\end{subnumcases}
Define a map 
\begin{align}
& F: H^{s+\frac{1}{2}}(\partial \Om) \times H^{s+1}(\Om) 
\rar H^{s+\frac{1}{2}}(\partial \Om) \times H^{s -1 }(\Om) ,
\nonumber \\
{\rm by} \:\:\:\:\:\: & F(h, f) = ( f\left|_{\partial \Om}\right. - h, \Delta f + \det(D^2 f) ).
\nonumber
\end{align}
Notice that $F$ is $C^1$ in the neighborhood of the origin and and $F(0,0) = 0$, where we denote by $0$ 
the origin in the product Hilbert space $H^{s+\frac{1}{2}}(\partial \Om) \times H^{s -1 }(\Om) $. Letting $w \in  H^{s+1}(\Om)$, we obtain
\begin{gather}
D_2 F(0,0)(w) = (  w\left|_{\partial \Om}\right., \Delta w ),
\end{gather}
where $D_2$ is the partial derivative of $F$ with respect to 
its second argument. From the uniqueness of solutions to the 
 Dirichlet problem 
it follows that $D_2 F(0,0)$ is an isomorphism, and therefore 
by the implicit function theorem there exists a neighborhood of zero
in $H^{s+\frac{1}{2}}(\partial \Om)$, which we can take without 
loss of generality as a ball
 $\cB_{\de_0}^{s+\frac{1}{2}}(\partial \Om)$, and 
a $C^1$
map $\varphi: \cB_{\de_0}^{s+\frac{1}{2}}(\partial \Om) \rar H^{s+1}(\Om)$
satisfying $\varphi(0) = 0$, and 
such 
that $F(h, \varphi(h)) = 0$ for all $h \in \cB_{\de_0}^{s+\frac{1}{2}}(\Om)$.
In other words, $f = \varphi(h)$ solves (\ref{nl_Dirichlet_f_2d_system}).

Furthermore, since $\varphi$ is continuous, given $\epsilon >0$ we
can choose $\de_0$ so small that 
$\p \varphi(h) \p_{s+1} < \epsilon$. But if $f$ is a solution of (\ref{nl_Dirichlet_f_2d_system}) with $\p f \p_{s+1} \leq \epsilon$, 
and 
$\epsilon$ is sufficiently small, then by elliptic theory the solution is unique 
and obeys the estimate
\begin{gather}
\p f \p_{s+1} \leq C \p h \p_{s+ \frac{1}{2}, \partial  },
\label{elliptic_estimate_f_bry} 
\end{gather}
where the constant $C$ depends only on $\epsilon$, $s$, and $\Om$.

Next, we show that the map $\varphi$ is injective. Let $f = \varphi(h)$
and $u = \varphi(g)$. Then $f-u$ satisfies 
\begin{gather}
\begin{cases}
\Delta (f-u) = - \det(D^2 f) + \det( D^2 u) & \text{ in } \Om,
 \\
f -u = h - g& \text{ on } \partial \Om,
\end{cases}
\label{f_u}
\end{gather}
But 
\begin{align}
\begin{split}
\det(D^2 u) - \det( D^2 f) & = 
u_{xx} u_{yy} - u_{xy}^2 -f_{xx} f_{yy} + f_{xy}^2 \\
& = (u_{xx} - f_{xx} ) u_{yy} + (f_{yy} - u_{yy}) f_{xx} 
+ (f_{xy} - u_{xy}) (f_{xy}+u_{xy} ),
\end{split}
\label{det_f_u}
\end{align}
and therefore combining (\ref{f_u}) with (\ref{det_f_u})
and standard elliptic estimates produces
\begin{gather}
\p f - u \p_{s+1} \leq 
C \Big ( \p f - u\p_{s+1} \big   (\p f \p_{s+1} + \p u \p_{s+1} \big ) +
\p h - g\p_{s + \frac{1}{2}, \partial} \Big ),
\label{estimate_f_u_intermediate}
\end{gather}
where $C$ depends only on $\Om$ and $s$.

By (\ref{elliptic_estimate_f_bry}) we can estimate 
$f$ and $u$ in terms of $h$ and $g$, respectively. Moreover,
since $\p h \p_{s+\frac{1}{2}, \partial} < \de_0$ and 
$\p g \p_{s+\frac{1}{2}, \partial }< \de_0$, we can choose $\de_0$ 
so small that
\begin{gather}
C  \p f - u\p_{s+1} \big   (\p f \p_{s+1} + \p u \p_{s+1} \big )
\leq \frac{1}{2} \p f - u\p_{s+1},
\nonumber
\end{gather}
and therefore this term can be absorbed on the left hand side 
of (\ref{estimate_f_u_intermediate}), yielding
\begin{gather}
\p f - u \p_{s+1} \leq 
C  \p h - g\p_{s + \frac{1}{2}, \partial} .
\nonumber 
\end{gather}
In particular, $f = u$ when $h = g$, so the map $\varphi$ is
injective.

Lastly, we show that the derivative of $\varphi$ (which we already know
exists and is continuous) is injective as well. From this and 
the above it then follows that $\varphi( \cB_{\de_0}(\partial \Om))$
is a submanifold of $H^{s+1}(\Om)$. We start computing 
the derivative of $\varphi$ at zero in 
the direction $z$. Since  $\varphi(0) = 0$,
\begin{gather}
\varphi(0 + z) - \varphi(0) = \varphi(z) = D\varphi(0)(z) + 
o(\p z\p_{s+\frac{1}{2}, \partial }).
\label{expansion_derivative_varphi}
\end{gather}
But $\varphi(z) = w$, where $w$ solves
\begin{gather}
\begin{cases}
\Delta w + \det(D^2 w) = 0 & \text{ in } \Om, \\
w = z & \text{ on } \partial \Om,
\end{cases}
\nonumber
\end{gather}
from which we can write
\begin{gather}
w = - \Delta_0^{-1} (  \det(D^2 w) ) + \cH(z),
\nonumber
\end{gather}
where $\Delta_0^{-1}$ is the inverse of the Laplacian with zero 
boundary condition and $\cH(z)$ is the harmonic extension of $z$ to 
the domain $\Om$. From standard properties of the Laplacian and 
(\ref{elliptic_estimate_f_bry}) we obtain
\begin{gather}
\p \Delta_0^{-1} (  \det(D^2 w) ) \p_{s+1} 
\leq C \p z \p_{s + \frac{1}{2},\partial }^2.
\nonumber
\end{gather}
From this estimate and the expansion 
(\ref{expansion_derivative_varphi}) we conclude
that 
\begin{gather}
D\varphi(0) = \cH.
\nonumber
\end{gather}
In particular, the derivative of $\varphi$ is injective at the origin.
Since $\varphi$ is $C^1$, we conclude that $D\varphi$ is one-to-one,
provided $\de_0$ is taken sufficiently small. 

Recall now the definition (\ref{big_phi}). Notice that
$\Phi$ is well defined (if $\de_0$ is small) and its image
belongs to $\cE_\mu^s(\Om)$ since $J(\beta) = 1$ and, by 
construction,  $J(\id + \nabla f) = 1$.

We have therefore proven:

\begin{prop}
Let $s > \frac{n}{2} + 1$ and let $B^{s+\frac{1}{2}}_{\de_0}(\partial \Om)$ be 
the open ball of radius $\de_0$ in $H^{s+\frac{1}{2}}(\partial \Om)$. Then
if $\de_0$ is sufficiently small, there exists an
embedding $\varphi: B^{s+\frac{1}{2}}_{\de_0}(\partial \Om) \rar H^{s+1}(\Om)$, 
given explicitly by $\varphi(h) = f$, where $f$ solves (\ref{nl_Dirichlet_f_2d_system}). Moreover, the map $\Phi$ given by (\ref{big_phi}) is well defined.
\label{embedding}
\end{prop}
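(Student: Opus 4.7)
The plan is to construct $\varphi$ via the implicit function theorem, then upgrade the resulting $C^1$ local solution to a genuine embedding by separately verifying injectivity of $\varphi$ and of $D\varphi$.

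First I would set up the functional framework. Define
\[
F: H^{s+\frac{1}{2}}(\partial\Om) \times H^{s+1}(\Om) \rar H^{s+\frac{1}{2}}(\partial\Om) \times H^{s-1}(\Om)
\]
by $F(h,f) = (f|_{\partial\Om} - h,\, \Delta f + \det(D^2 f))$. The targets are chosen so that: (i) the trace map sends $H^{s+1}(\Om)$ to $H^{s+1/2}(\partial\Om)$ by (\ref{restriction}); (ii) $\Delta f \in H^{s-1}(\Om)$; and (iii) each entry of $D^2 f$ lies in $H^{s-1}(\Om)$, and since $s-1 > n/2$, the bilinear estimate (\ref{bilinear}) shows $\det(D^2 f) \in H^{s-1}(\Om)$ with $\|\det(D^2 f)\|_{s-1} \leq C\|f\|_{s+1}^2$. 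These estimates also show $F$ is $C^1$ near the origin (in fact smooth, as it is a polynomial plus linear pieces), and clearly $F(0,0)=0$.

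Next I would verify that $D_2F(0,0)(w) = (w|_{\partial\Om},\Delta w)$ is an isomorphism from $H^{s+1}(\Om)$ onto $H^{s+1/2}(\partial\Om) \times H^{s-1}(\Om)$; this is exactly the standard well-posedness of the Dirichlet problem with data at the asserted regularity. The implicit function theorem then yields $\delta_0 > 0$ and a $C^1$ map $\varphi: \cB_{\de_0}^{s+\frac{1}{2}}(\partial\Om) \rar H^{s+1}(\Om)$ with $\varphi(0)=0$ and $F(h,\varphi(h))=0$, i.e.\ $f=\varphi(h)$ solves (\ref{nl_Dirichlet_f_2d_system}). Shrinking $\de_0$ and using continuity of $\varphi$ at $0$ I can force $\|\varphi(h)\|_{s+1} < \epsilon$ for any prescribed small $\epsilon$, which is what is needed to absorb nonlinear terms.

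Then I would prove $\varphi$ is injective. For $f=\varphi(h)$, $u=\varphi(g)$, subtract the two nonlinear Dirichlet problems to obtain an equation for $f-u$ with right-hand side $\det(D^2 u)-\det(D^2 f)$. Expanding this difference as a sum of bilinear terms in second derivatives and applying standard elliptic regularity together with (\ref{bilinear}) gives
\[
\|f-u\|_{s+1} \leq C\bigl(\|f\|_{s+1}+\|u\|_{s+1}\bigr)\|f-u\|_{s+1} + C\|h-g\|_{s+\frac{1}{2},\partial}.
\]
Since $\|f\|_{s+1}+\|u\|_{s+1}$ is controlled by $\de_0$ via the elliptic estimate (\ref{elliptic_estimate_f_bry}), picking $\de_0$ so that $C(\|f\|_{s+1}+\|u\|_{s+1}) \leq 1/2$ absorbs the first term, yielding $\|f-u\|_{s+1} \leq 2C\|h-g\|_{s+\frac{1}{2},\partial}$; in particular $\varphi$ is injective (and Lipschitz).

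To show $D\varphi$ is injective, I would compute it at the origin. Writing $w=\varphi(z)$ and using $w = -\Delta_0^{-1}(\det(D^2 w)) + \cH(z)$, where $\cH(z)$ is the harmonic extension of $z$, the quadratic estimate $\|\Delta_0^{-1}(\det(D^2 w))\|_{s+1} \leq C\|w\|_{s+1}^2 \leq C'\|z\|_{s+\frac{1}{2},\partial}^2$ identifies $D\varphi(0) = \cH$, which is injective. Since $\varphi$ is $C^1$ and $D\varphi(0)$ has closed range and is injective with bounded inverse on its image (again by Dirichlet theory), continuity of $D\varphi$ on the ball ensures $D\varphi(h)$ remains injective for all $h$ in a possibly smaller ball $\cB_{\de_0}^{s+\frac{1}{2}}(\partial\Om)$. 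Combined with global injectivity, this makes $\varphi$ a $C^1$ embedding onto a $C^1$ submanifold of $H^{s+1}(\Om)$.

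Finally, for the map $\Phi$ of (\ref{big_phi}), well-definedness is essentially automatic: $\beta \in \cD_\mu^s(\Om)$ has $J(\beta)=1$ and maps $\Om$ to itself, while $\id+\nabla f$ has $J(\id+\nabla f)=1$ by construction of $\varphi$. For $\de_0$ small, $\|\nabla f\|_{C^0}$ is controlled by $\|f\|_{s+1}$ via Sobolev embedding (using $s+1 > n/2+2$), so $\id + \nabla f$ is a local diffeomorphism with image staying near $\Om$; together with unit Jacobian and appropriate smallness this gives an $H^s$ volume-preserving embedding, so $\Phi(\beta,f) = (\id+\nabla f)\circ \beta \in \cE_\mu^s(\Om)$, the composition being controlled by Proposition \ref{Sobolev_composition}. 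The main obstacle in the above is really the injectivity of $D\varphi$ away from the origin: getting $D\varphi(0)=\cH$ is clean, but propagating injectivity to nearby $h$ requires the continuity of $D\varphi$ together with the open condition that an injective Fredholm-type map remains injective under small perturbations — this is where the smallness of $\de_0$ must be used a second time.
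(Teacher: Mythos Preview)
Your proposal is correct and follows essentially the same approach as the paper: the implicit function theorem applied to $F(h,f)=(f|_{\partial\Om}-h,\,\Delta f+\det(D^2 f))$, injectivity of $\varphi$ via the subtraction/absorption argument, identification $D\varphi(0)=\cH$ with injectivity propagated by continuity, and well-definedness of $\Phi$ from $J(\id+\nabla f)=1$. Your added remarks on why $\id+\nabla f$ is an embedding (via Sobolev control of $\nabla f$ in $C^0$) are a slight elaboration beyond what the paper spells out, but the structure and key steps are the same.
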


\begin{defi}
Under the hypotheses of proposition \ref{embedding}, we define
$\mathscr{E}_\mu^s(\Om) \subseteq \cE_\mu^s(\Om)$ by
\begin{gather}
\mathscr{E}_\mu^s(\Om)
 = \Phi\big( \cD_\mu^s (\Om) \times \varphi(\cB_{\de_0}^{s+\frac{1}{2}}(\partial \Om))  \big).
\nonumber
\end{gather}
\end{defi}

\section{A new system of equations. \label{setting}}

In this section, we shall derive a different set of equations for 
the free boundary problem (\ref{free_boundary_full}). 
In light of proposition \ref{embedding} and the hypotheses of
theorem \ref{main_theorem} we can, from now on, assume that 
solutions $\eta$ to (\ref{free_boundary_full}) can be written as
\begin{align}
\eta = (\id + \nabla f) \circ \beta,
\label{def_f_beta} 
\end{align}
with $\beta \in \cD_\mu^s(\Om)$, $\nabla f \in H^s(\Om)$
and with $f$ satisfying (\ref{nl_Dirichlet_f_2d}).
We also observe that 
\begin{align}
 \beta(0) = \id,~~\nabla f(0) = 0.
 \nonumber
\end{align}

It is customary to write the pressure as a sum of an interior and a boundary term, namely,
 $p = p_0 + k \cA_H$,  so that the system (\ref{free_boundary_full}) takes the form
 \begin{subnumcases}{\label{free_boundary} }
  \ddot{\eta}  = - \nabla p \circ \eta = - (\nabla p_0 + k \nabla \cA_H)\circ \eta & in $\Om$,
\label{basic_fluid_motion} \\
  \Delta p_0 = - \dive (\nabla_u u) & in $\eta(\Om)$ , \\
\left. p_0 \right|_{\partial \eta(\Om)} = 0 & on $ \partial \eta(\Om)$,
\label{equation_p_0} \\
  \Delta \cA_H = 0 &  in  $\eta(\Om)$, \\
\left. \cA_H \right|_{\partial \eta(\Om)} =  \cA &   on $\partial \eta(\Om)$,
 \label{equation_harm_ext} \\
 \eta(0) = \id,~\dot{\eta}(0)=u_0.
\end{subnumcases}

The energy for the fluid motion (\ref{free_boundary}) is given by the sum
of the kinetic and potential energies (\ref{kinetic}) and (\ref{potential}), respectively,
\begin{align}
\begin{split}
 E(t) & = K(\eta,\dot{\eta}) + V(\eta) \\
& = \frac{1}{2}\int_\Om |\dot{\eta}|^2 + k \Big ( |\partial \Om(t)| - |\partial \Om| \Big )  ,
\label{energy}
\end{split}
\end{align}
with $\eta : [0,T) \rar \cE_\mu^s(\Om)$.
This energy is conserved, and therefore 
\begin{gather}
E(t)  = \frac{1}{2} \p u_0 \p^2_0 
\label{energy_conserved} 
\end{gather}
where we have used $\dot{\eta} = u \circ \eta$ and $\eta(0) = \id$. 

Differentiating (\ref{def_f_beta}) in time gives
\begin{align}
 \dot{\eta} = (\nabla \dot{f} + v \cdot D \,\nabla f + v) \circ \beta,
\label{dot_eta_f_beta}
\end{align}
where $v$ is defined by 
\begin{align}
 \dot{\beta} = v \circ \beta.
\label{definition_v}
\end{align}
Using $\eta(0) = \id$ and $\dot{\eta}(0) = u_0$, 
from (\ref{dot_eta_f_beta}) we obtain
\begin{align}
 u_0 = \nabla \dot{f}(0) + v_0.
\nonumber
\end{align}
Since $u_0$ is divergence free and tangent to the boundary, one sees that $\nabla \dot{f} (0) = 0$, and
so
\begin{gather}
 v_0 = P u_0 = u_0.
\nonumber
\end{gather}
Differentiating (\ref{dot_eta_f_beta}) again and using (\ref{basic_fluid_motion}) gives the following equation for 
$\nabla f:$
\begin{align}
 \nabla \ddot{f} + 2 D_v \nabla \dot{f} + D^2_{vv} \nabla f + (\dot{v} + \dot{v} \cdot \nabla v) D \, \nabla f
+ \dot{v} + v \cdot \nabla v = -\nabla p\circ (\id + \nabla f),
\label{equation_ddot_nabla_f}
\end{align}
where the operator $D^2_{vv}$, acting on a vector $w$, is given in coordinates by 
\begin{gather}
( D^2_{vv} w )^i = v^j v^l \partial_{j}\partial_{l} w^i,
\label{D_vv_two}
\end{gather}
or in invariant form by
\begin{gather}
D^2_{v v} w = D_{v} \nabla_{v} w - D_{\nabla_{v} v} w.
\nonumber
\end{gather}
Define $L$ on the space of maps from $\Om$ to $\RR^2$ by $L = \id +D^2f$, let $L_1 = PL$ and let $L_2 = QL$, where 
$P$ and $Q$ are as in section \ref{auxiliary}.  Notice that $L_1$ is invertible on the image of $P$ if $f$
is small, since in this case it will be close to the identity. 
Applying $Q\circ(\id - LL^{-1}P) = Q - L_2 L_1^{-1}P$ to (\ref{equation_ddot_nabla_f}) we obtain
\begin{subnumcases}{\label{system_f_v_full}}
\nabla \ddot{f}  + 2 (Q - L_2 L_1^{-1}P) D_v \nabla \dot{f}
 \nonumber \\
\; \; \; \; \; \;   +\, (Q - L_2 L_1^{-1}P)D^2_{vv} \nabla f 
= -(Q - L_2 L_1^{-1}P) (\nabla p\circ (\id + \nabla f) ) & in $\Om$,
\label{system_f_v_f_dot_dot} \\
 \dot{v} +  P(\nabla_v v) + 2 L_1^{-1}PD_v \nabla\dot{f} + L_1^{-1} P D^2_{vv} \nabla f = 0  & in $\Om$, \label{system_f_v_v_dot} \\
\nabla f (0) = \nabla \dot{f} (0) = 0,~~ v(0) = P u_0 = u_0. \label{initial conditions}
\end{subnumcases}
The system (\ref{system_f_v_full}) will be used to derive estimates for $\nabla f$ and $v$.

\section{Energy estimates.\label{energy_esimates_section}}

The main goal of this section is to derive energy estimates which will be used
to control the boundary oscillation $\nabla f$.

In what follows, we shall make use of the well known decomposition of  a vector
field into its gradient and divergence free parts, as presented at the end of the introductory section. Hence, recall that $Q: H^s(\Om, \RR^n) \rar \nabla H^{s+1}(\Om, \RR^n)$ and 
 $P: H^s(\Om,\RR^n) \rar \dive^{-1}(0)_\nu$
 (where $\dive^{-1}(0)_\nu$ denotes divergence free vector fields 
tangent to $\partial \Om$) are the operators realizing this decomposition. They
satisfy $P + Q = I$, where $I$ is the identity map, and 
since  $\nabla H^{s+1}(\Om, \RR^n)$ and $\dive^{-1}(0)_\nu$
are orthogonal, it follows that $Q$ and $P$ are orthogonal projections in $L^2$.

\begin{notation}
From the assumptions that  $\partial \Om \subset \RR^2$  has constant mean curvature,
 it follows that $\Om$ is a disk of fixed radius. In order to simplify the notation,
we shall assume once and for all that the radius is one. Therefore $\partial \Om = S^1$. Clearly there is no loss of generality in doing so.
\end{notation} 

Taking into account the previous remark on notation, denote
 by $\tau$ and $\nu$ the tangent and outer unit normal to $S^1=\partial \Om$, so that 
$\tau$ points counterclockwise-wise. One easily computes then
\begin{gather}
 D_\tau \tau = -\nu,
\nonumber
\end{gather}
and 
\begin{gather}
 D_\tau \nu = \tau
\nonumber.
\end{gather}

We shall also need the following. In two dimensions
\begin{gather}
 D\eta = 
\left(\begin{array}{cc}
 \partial_x \eta^1 & \partial_y \eta^1 \\
\partial_x \eta^2 & \partial_y \eta^2
\end{array}\right),
\nonumber
\end{gather}
so it follows that
\begin{gather}
( D\eta )^{-1}= 
\frac{1}{\partial_x \eta^1\partial_y \eta^2 - \partial_y \eta^1 \partial_x \eta^2}
\left(\begin{array}{cc}
 \partial_y \eta^2 & -\partial_y \eta^1 \\
- \partial_x \eta^2 & \partial_x \eta^1
\end{array}\right)
=
\left(\begin{array}{cc}
 \partial_y \eta^2 & -\partial_y \eta^1 \\
- \partial_x \eta^2 & \partial_x \eta^1
\end{array}\right),
\nonumber
\end{gather}
since the Jacobian determinant of $\eta$ is one. Therefore,
any Sobolev norm of $(D\eta)^{-1}$ is bounded by that of $D \eta$,
\begin{gather}
 \p (D\eta)^{-1} \p_s \leq C \p D\eta \p_s.
\label{D_eta_inverse}
\end{gather}
A similar statement
holds for $(D\beta)^{-1}$.
Moreover, changing variables, using $\beta(\Om) = \Om$ and $J(\beta)=1$ gives
\begin{gather}
 \int_\Om |\beta^{-1}|^2 = \int_{\beta\circ\beta(\Om)} |\beta^{-1}|^2 =
  \int_\Om |\beta|^2 .
\nonumber
\end{gather}
Hence
\begin{gather}
 \p \beta^{-1} \p_s \leq C \p \beta^{-1} \p_0 + C \p D\beta^{-1} \p_{s-1} 
\leq C \p \beta \p_0 + C \p D\beta^{-1} \p_{s-1}.
\nonumber
\end{gather}
Using again change of variables and $\beta(\Om) = \Om$ and $J(\beta)=1$,
\begin{gather}
 \p D\beta^{-1} \p_{s-1} \leq C \p (D\beta)^{-1} \p_{s-1}
\nonumber
\end{gather}
so that 
\begin{gather}
 \p \beta^{-1} \p_s \leq C \p \beta \p_s.
\label{beta_inverse_Sobolev}
\end{gather}

Given an initial condition $u_0 \in H^{5\half}(\Om)$ for the free boundary problem,
by the results of Coutand and Shkoller \cite{CS}, there exist a $T_k > 0$, a unique 
curve $\eta_k(t)  \in H^{4\half}(\Om)$, and a unique function $p_k(t)  \in H^4(\Om(t))$
satisfying the free boundary value problem (\ref{free_boundary}); 
moreover $\dot{\eta}_k \in H^{4\half}(\Om)$.

Consider the decomposition (\ref{def_f_beta}). As mentioned in the introduction, 
we shall prove that $\nabla f$ is in fact $1 \half$ degree smoother than
$\eta$; see proposition \ref{decomposition_more_regular}. Therefore
it  makes sense to consider $\nabla f_k \in H^s(\Om)$, $ 4\half \leq s \leq 6$. 
From  (\ref{initial conditions}) we know that 
$\nabla f_k$ and $\nabla \dot{f}_k$ are small 
for small time. More precisely we shall assume the following

\begin{assump} (Bootstrap assumption). There exists constant $C>0$, independent of $k$, 
 such that on some interval $[0,T_k)$,
\begin{subequations}
\begin{align}
 \p \nabla f_k \p_s \leq \frac{C}{k^\frac{6-s}{2}}, &~ 0 \leq s \leq 6,
\label{bootstrap_eq_1}
\\
 \p \nabla \dot{f}_k \p_s \leq \frac{C}{k^\frac{4.5-s}{2}},~ & 0 \leq s \leq 4\half.
\label{bootstrap_eq_2}
\end{align}
\label{bootstrap_eq}
\end{subequations}
\label{bootstrap_assump}
\end{assump}
Assumption \ref{bootstrap_assump} will be assumed in this section. Its validity
is shown in proposition \ref{decomposition_more_regular}, which is stated below and proven
in section \ref{regularity_and_elliptic_estimates}.

We seek to show that estimates (\ref{bootstrap_eq}) hold on a time interval
uniform in $k$; in other words, that $T_k$ does not become arbitrarily small as $k$ gets large. In order to 
do this we shall prove that (\ref{bootstrap_eq}) implies improved estimates for $\nabla f$ and 
$\nabla \dot{f}$, possibly with a different constant $C>0$.

\begin{rema}
For simplicity, in (\ref{bootstrap_eq_1}) and (\ref{bootstrap_eq_2}), all values $ 0 \leq s \leq 6$ and $0 \leq s \leq 4\half$ 
are considered. As we point out in section \ref{section_bootstrap},
it will be enough to have (\ref{bootstrap_eq}) for 
only finitely many values of $s$.
\label{remark_finite}
\end{rema}

\begin{rema}
Several estimates below are valid for $\p \nabla f_k \p_s$ and $\p \nabla \dot{f}_k \p_s$ sufficiently
small. In light of assumption \ref{bootstrap_assump}, and since we 
are interested in the limit $k\rar \infty$, there will be no loss of generality in assuming that such 
smallness conditions are in fact met. The most common application of this idea will be to drop 
higher powers of $\p \nabla f_k \p_s$ and $\p \nabla \dot{f}_k \p_s$, and also to keep only the highest
derivative terms. For example, if one has a long expression such as
$C\p \nabla f_k\p_3 (1 + \p \nabla f_k \p_4 ) + C\p \nabla \dot{f}_k \p_2^2 + C\p \nabla \dot{f}_k \p_{2\half}^2$, 
if $k$ is large enough this expression is controlled by 
$C(\p \nabla f_k \p_3 + \p \nabla \dot{f}_k \p_{2\half}^2)$.
When the norms dealt with are clear from the context, we shall
use a slight abuse of language and refer simply to $\nabla f_k$ or $\nabla \dot{f}_k$ as being small.
\end{rema}

The propositions below will be used in obtaining the desired energy estimates. Their
proofs are given in section \ref{regularity_and_elliptic_estimates}.

\begin{prop}
Let $\eta_k \in \ccE^{4\half}_\mu(\Om)$ be a solution of the 
free boundary problem (\ref{free_boundary})  as in 
theorem \ref{main_theorem}, and 
defined on a time interval $[0,T_k)$. Then $\dot{\eta_k} \in H^{4\half}(\Om)$.
Moreover, if $T_k$ is sufficiently small and $k$ sufficiently large, 
then  $\nabla f_k \in \cH^6(\Om)$, $\nabla \dot{f}_k 
\in H^{4\half}(\Om)$, and equations (\ref{bootstrap_eq}) hold.
\label{decomposition_more_regular}
\end{prop}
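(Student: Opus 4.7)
The plan rests on three sources of information: the Coutand-Shkoller well-posedness supplies the baseline regularity $\eta_k \in H^{4\half}$, $p_k \in H^4$; the surface tension boundary condition $p|_{\partial \eta(\Om)} = k\cA$ from (\ref{equation_harm_ext}) forces extra regularity of the free boundary itself; and conservation of energy (\ref{energy_conserved}) combined with the isoperimetric inequality provides smallness in $k$. The assertion $\dot\eta_k \in H^{4\half}$ is contained in \cite{CS} and already noted in Remark \ref{remark_regularity_eta_dot}, so only the statements about $\nabla f_k$ and $\nabla \dot f_k$ require work.

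For the regularity upgrade, I would first take the trace of $p_k \in H^4(\eta_k(\Om))$ onto the free boundary to obtain $p_k|_{\partial \eta_k(\Om)} \in H^{7/2}$; combined with $p_k = k\cA$ on $\partial\eta_k(\Om)$ this yields $\cA \in H^{7/2}(\partial\eta_k(\Om))$. Since $\cA$ is a nonlinear second-order differential expression in a local chart of the boundary curve, elliptic regularity for the prescribed mean curvature equation lifts this to $\partial\eta_k(\Om) \in H^{11/2}$ as a submanifold of $\RR^2$, which is a full $1\half$ derivatives smoother than the $H^4$ trace of $\eta_k|_{\partial\Om}$ provides on its own. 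Because $\beta_k(\partial\Om) = \partial\Om$ and $\eta_k = (\id + \nabla f_k)\circ\beta_k$, this extra boundary regularity transfers through the composition to $f_k|_{\partial\Om} \in H^{13/2}(\partial\Om)$. Plugging this datum into the nonlinear Dirichlet problem (\ref{nl_Dirichlet_f_2d_system}), with (\ref{elliptic_estimate_f_bry}) as the linear estimate and the bilinear inequality (\ref{bilinear}) used to absorb the quadratic nonlinearity $\det(D^2 f_k)$, returns $f_k \in H^7(\Om)$, hence $\nabla f_k \in H^6(\Om)$. The regularity $\nabla \dot f_k \in H^{4\half}(\Om)$ follows by time-differentiating (\ref{nl_Dirichlet_f_2d}) and running the analogous elliptic argument with $\dot\eta_k \in H^{4\half}$ as input.

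For the decay rates (\ref{bootstrap_eq}), the starting point is the energy identity (\ref{energy_conserved}), which gives
\[
|\partial\Om(t)| - |\partial\Om| \,\leq\, \tfrac{1}{2k}\,\n u_0 \n_0^{2}.
\]
Since $\Om$ is the unit disk and $\eta_k$ preserves area, parametrizing $\partial\eta_k(\Om) = \{r = 1 + h_k(\theta)\}$ and expanding arclength to second order together with the area constraint and Wirtinger's inequality yields a low-end smallness bound on $h_k$ of order $k^{-1/2}$; via (\ref{elliptic_estimate_f_bry}) this transfers into a low-end bound on $\nabla f_k$ inside $\Om$. Interpolating between this low endpoint and the uniform top-level bound supplied by the regularity step yields (\ref{bootstrap_eq_1}); running the analogous scheme on the time-differentiated system (\ref{system_f_v_full}), and using the initial conditions $\nabla f_k(0) = \nabla \dot f_k(0) = 0$, produces (\ref{bootstrap_eq_2}).

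The main obstacle is to close the bootstrap on a time interval that does not shrink with $k$: a priori $T_k$ could degenerate and the top-order norms $\p \nabla f_k \p_6$ and $\p \nabla \dot f_k \p_{4\half}$ could blow up as $k \to \infty$, so what is really required is \emph{uniform-in-$k$} estimates on a fixed subinterval, together with enough control on the interpolation exponents to reproduce the precise decay rates $k^{-(6-s)/2}$ and $k^{-(4\half - s)/2}$ demanded by (\ref{bootstrap_eq}). Securing these uniform bounds, which must be compatible with the harmonic-oscillator-like structure of the boundary motion at frequency $\sqrt{k}$, is the technical heart of the argument and is the reason that the explicit equations (\ref{system_f_v_full}) and the detailed energy estimates of Section \ref{energy_esimates_section} are needed.
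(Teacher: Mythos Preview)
Your regularity upgrade for $\nabla f_k \in H^6$ follows the same line as the paper: extract $\cA \in H^{7/2}$ on the free boundary from the pressure trace, use the mean-curvature relation to gain two derivatives on the boundary parametrization, and feed the improved boundary data through the nonlinear Dirichlet problem (\ref{nl_Dirichlet_f_2d_system}). The paper carries this out more concretely by writing the mean curvature of $\partial\widetilde\eta(\Om)$ in the $(\tau,\nu)$ frame and solving an explicit $2\times 2$ algebraic system for $D_\tau^3 f$ and $D_\tau^2 D_\nu f$ (Proposition \ref{f_more_regular_v}), but the idea is the same. One point to watch: transferring regularity from $\partial\eta_k(\Om)$ back to $f_k|_{\partial\Om}$ involves composition with $\beta_k^{-1}$, which is only $H^{4\half}$; the paper sidesteps this by working throughout with $\widetilde\eta = \id + \nabla f$ rather than $\eta$, so that $\cA\circ\widetilde\eta$ is expressed directly in terms of $\nabla f$ on the fixed boundary $\partial\Om$.

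For the decay rates (\ref{bootstrap_eq}), however, you are working much harder than the proposition requires. The statement allows $T_k$ to be ``sufficiently small'' \emph{with no uniformity in $k$}, and the paper exploits exactly this: since $\nabla f_k(0)=\nabla\dot f_k(0)=0$ and both are continuous in $t$ with values in $H^{4\half}$, for each fixed $k$ one simply shrinks $T_k$ until $\|\nabla f_k(t)\|_s \le C k^{-(6-s)/2}$ holds for $s\le 4\half$ --- the right side being a fixed positive number once $k$ is fixed. One then applies the regularity upgrade to reach $\nabla f_k\in H^6$, and shrinks $T_k$ once more to secure (\ref{bootstrap_eq_1}) at $s=6$. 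No isoperimetric inequality, no Wirtinger, no interpolation is needed at this stage. Your energy-conservation route is aiming at the uniform-in-$k$ statement, which belongs to Section \ref{section_bootstrap}; your final paragraph correctly identifies that as the real difficulty. The content of Proposition \ref{decomposition_more_regular} is only that the bootstrap gets started on \emph{some} interval, so that the machinery of Sections \ref{energy_esimates_section} and \ref{section_bootstrap} can then show that interval does not collapse as $k\to\infty$.
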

\begin{proof}
 See section \ref{regularity_and_elliptic_estimates}.
\end{proof}

It will also be important to relate equations in $\Om$ and $\Om(t)$. To this 
end, let $\xi$ denote either $\eta$ or $\id + \nabla f = \eta\circ \beta^{-1}$.
We denote by $\Delta_\xi$ the operator defined by
\begin{gather}
 \Delta_\xi g = \left( \Delta (g\circ \xi^{-1}) \right)\circ \xi,
\nonumber
\end{gather}
where $\Delta$ is the Laplacian in the domain $\xi(\Om)$.

\begin{prop} 
For sufficiently small time and small $\nabla f$, $\Delta_\xi$ is an invertible elliptic operator on $H^s_0(\Om)$ .
\label{perturbation_elliptic}
\end{prop}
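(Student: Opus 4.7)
The strategy is to exploit that $\Delta_\xi$ is a change-of-variables version of the standard Euclidean Laplacian on $\xi(\Om)$, so since $\xi$ is close to the identity for small time and small $\nabla f$, the operator is a small perturbation of the ordinary Dirichlet Laplacian on $\Om$. I would first compute $\Delta_\xi$ in local coordinates: with $y = \xi(x)$ and $h(y) = g(\xi^{-1}(y))$, the chain rule yields
\begin{equation*}
\Delta_\xi g(x) = a^{ij}(x)\, \partial_i \partial_j g(x) + b^i(x)\, \partial_i g(x),
\end{equation*}
where $a^{ij}(x) = \sum_k \bigl(\partial_k(\xi^{-1})^i \, \partial_k(\xi^{-1})^j\bigr)\bigl(\xi(x)\bigr)$ is the pullback of the Euclidean inverse metric on $\xi(\Om)$, and $b^i$ involves second derivatives of $\xi^{-1}$. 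At $\xi = \id$ we recover $a^{ij} = \delta^{ij}$, $b^i = 0$, and $\Delta_\xi = \Delta$.

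Next I would verify uniform ellipticity and smallness of $a^{ij} - \delta^{ij}$ and $b^i$ in an appropriate Sobolev norm. Since $\eta(0) = \id$ and $\nabla f(0) = 0$, Sobolev embedding in two dimensions (where $H^s \hookrightarrow C^1$ for $s > 2$) together with the regularities $\eta \in H^{4\half}$ and $\nabla f \in H^6$ guaranteed by proposition \ref{decomposition_more_regular}, and the identity (\ref{D_eta_inverse}) expressing $(D\xi)^{-1}$ in terms of $D\xi$, show that $D\xi^{-1}$ remains $C^1$-close to the identity for small time and small $\nabla f$. Hence the principal symbol $a^{ij}$ is uniformly positive-definite and $\|a^{ij} - \delta^{ij}\|_{s-1}$, $\|b^i\|_{s-2}$ are small, the latter being controlled by (\ref{bilinear}) and composition (proposition \ref{Sobolev_composition}).

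Finally I would obtain invertibility by a perturbation argument. The classical Dirichlet Laplacian $\Delta: H^s(\Om) \cap H^1_0(\Om) \to H^{s-2}(\Om)$ is a topological isomorphism for $s \geq 2$ since $\Om$ is smooth. Writing $\Delta_\xi = \Delta + R_\xi$ with $R_\xi := (a^{ij}-\delta^{ij})\partial_i\partial_j + b^i \partial_i$, the operator $\Delta^{-1} R_\xi$ is bounded on $H^s(\Om) \cap H^1_0(\Om)$ with small norm (the two lost derivatives in $R_\xi$ are recovered by $\Delta^{-1}$, and the smallness of coefficients gives the smallness of the norm). Hence $I + \Delta^{-1}R_\xi$ is invertible by Neumann series and $\Delta_\xi = \Delta(I + \Delta^{-1}R_\xi)$ is itself an isomorphism $H^s(\Om) \cap H^1_0(\Om) \to H^{s-2}(\Om)$. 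An equivalent viewpoint, perhaps cleaner, is that $\Delta_\xi$ is conjugate to the Dirichlet Laplacian on $\xi(\Om)$ via the map $g \mapsto g\circ\xi^{-1}$, which is a Sobolev isomorphism by proposition \ref{Sobolev_composition}; since $\partial \xi(\Om)$ is sufficiently regular, classical elliptic theory applies on $\xi(\Om)$ and transfers back.

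The main subtlety is to quantify precisely how small $R_\xi$ must be to allow the Neumann series to converge at the Sobolev level needed later, which forces the smallness conditions on time and on $\nabla f$ stated in the hypothesis and justifies their use together with the bootstrap assumption \ref{bootstrap_assump} in the subsequent sections.
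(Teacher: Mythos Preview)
Your proposal is correct and follows essentially the same approach as the paper: show that $\xi$ is close to the identity in a suitable Sobolev norm and conclude that $\Delta_\xi$ is a small perturbation of the Dirichlet Laplacian on $\Om$, hence invertible. The paper's proof is terser---it simply verifies, using proposition \ref{bound_v} and corollary \ref{beta_bounded} for the case $\xi=\eta$, that $\xi-\id$ is $H^3$-small and then asserts the conclusion---whereas you spell out the Neumann-series mechanism explicitly; the content is the same.
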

\begin{proof}
 See section \ref{regularity_and_elliptic_estimates}.
\end{proof}

We are now ready to start our energy estimates. 
To facilitate the reading, most of the calculations will be split into several short lemmas, propositions and corollaries.

Define
\begin{gather}
 \widetilde{\eta} = \id + \nabla f.
\nonumber
\end{gather}
By construction, $\widetilde{\eta}$ is volume preserving, so the vector field
\begin{gather}
 \widetilde{u} = \dot{\widetilde{\eta}}\circ \widetilde{\eta}^{-1}
\label{u_tilde}
\end{gather}
is divergence free. Right invariance through $\beta$ implies that
$\widetilde{\eta}(\Om) = \eta(\Om)$, 
$\partial \widetilde{\eta}(\Om) = \partial \eta(\Om)$. In particular, $p$ is defined
on $\widetilde{\eta}(\Om)$.

Define the energy 
\begin{align}
 \begin{split}
  \widetilde{E}(t) & = \frac{1}{2} \int_\Om |\dot{\widetilde{\eta}}|^2 + k | \partial \widetilde{\eta}(\Om)| \\
& = \frac{1}{2} \int_\Om |\dot{\widetilde{\eta}}|^2 + k | \partial \eta(\Om)| .
 \end{split}
\label{energy_tilde}
\end{align}
The purpose of using $\widetilde{E}$ rather than the physical energy 
defined in (\ref{energy}) is that $\widetilde{E}$ allows us to estimate
$f$ more explicitly. Notice, however, that as  $\widetilde{E}$ does not
carry direct physical significance, we have not normalized it as to keep it
finite in the limit $k \rar \infty$.

\begin{lemma}
 \begin{gather}
 \widetilde{E}(t)   = \frac{1}{2} \p \widetilde{u}_0 \p^2_0 + k |\partial \Om| 
+ \int_0^t \int_\Om \langle \dot{\widetilde{\eta}},\, \ddot{\widetilde{\eta}} \rangle
+ \int_0^t \int_\Om \langle \dot{\widetilde{\eta}}, \, \nabla p \circ \widetilde{\eta} \rangle.
\nonumber
 \end{gather}
\label{lemma_AA}
\end{lemma}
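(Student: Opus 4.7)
The plan is to prove this by differentiating $\widetilde E(t)$ in time, identifying the boundary term using the surface tension condition, and then integrating back. Most of the work is a clean calculus-of-variations / divergence theorem computation, made simpler by the fact that $\widetilde{\eta}$ is volume preserving by construction, so $\widetilde{u}$ is divergence free.

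First, I would handle the initial value. Since $\widetilde\eta(0)=\id$, we have $\partial\widetilde\eta(\Om)=\partial\Om$, and from (\ref{definition_v}), (\ref{dot_eta_f_beta}) and $\nabla\dot f(0)=0$ we get $\dot{\widetilde\eta}(0)=\widetilde u_0$ (indeed $\widetilde u_0=u_0$), giving $\widetilde E(0)=\tfrac12\|\widetilde u_0\|_0^2+k|\partial\Om|$. Thus it suffices to prove
\[
\frac{d}{dt}\widetilde E(t)=\int_\Om\langle\dot{\widetilde\eta},\ddot{\widetilde\eta}\rangle+\int_\Om\langle\dot{\widetilde\eta},\nabla p\circ\widetilde\eta\rangle
\]
and then integrate from $0$ to $t$. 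Differentiating the kinetic term under the integral in (\ref{energy_tilde}) is immediate and produces the first term on the right, so the entire content of the lemma is the identity
\[
k\frac{d}{dt}|\partial\widetilde\eta(\Om)|=\int_\Om\langle\dot{\widetilde\eta},\nabla p\circ\widetilde\eta\rangle.
\]

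To establish this, I would use the first variation of arc length (the surface as it moves with velocity $\widetilde u$ sweeps out area at rate equal to the normal component integrated against the mean curvature):
\[
\frac{d}{dt}|\partial\widetilde\eta(\Om)|=\int_{\partial\widetilde\eta(\Om)}\cA\,\langle\widetilde u,\nu\rangle\,d\sigma,
\]
where $\cA$ is the mean curvature of $\partial\widetilde\eta(\Om)=\partial\eta(\Om)$ and $\nu$ is the outer unit normal. Now invoke the boundary condition (\ref{bry_p_full}) in the form $p|_{\partial\eta(\Om)}=k\cA$ (equivalently, $p_0$ vanishes on the boundary by (\ref{equation_p_0}) and $\cA_H|_{\partial\eta(\Om)}=\cA$ by (\ref{equation_harm_ext}), so $p|_{\partial\eta(\Om)}=k\cA_H|_{\partial\eta(\Om)}=k\cA$). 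Therefore
\[
k\frac{d}{dt}|\partial\widetilde\eta(\Om)|=\int_{\partial\widetilde\eta(\Om)}p\,\langle\widetilde u,\nu\rangle\,d\sigma.
\]

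Finally, I would change variables via the volume preserving map $\widetilde\eta$, using $\dot{\widetilde\eta}=\widetilde u\circ\widetilde\eta$ and $J(\widetilde\eta)=1$:
\[
\int_\Om\langle\dot{\widetilde\eta},\nabla p\circ\widetilde\eta\rangle=\int_{\widetilde\eta(\Om)}\langle\widetilde u,\nabla p\rangle\,dx,
\]
and then integrate by parts. Since $\widetilde u$ is divergence free (because $\widetilde\eta$ is volume preserving), the bulk term drops out and
\[
\int_{\widetilde\eta(\Om)}\langle\widetilde u,\nabla p\rangle\,dx=\int_{\partial\widetilde\eta(\Om)}p\,\langle\widetilde u,\nu\rangle\,d\sigma-\int_{\widetilde\eta(\Om)}p\,\dive(\widetilde u)\,dx=\int_{\partial\widetilde\eta(\Om)}p\,\langle\widetilde u,\nu\rangle\,d\sigma,
\]
matching the previous display. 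Integrating the resulting differential identity from $0$ to $t$ yields the lemma. The only step requiring any thought is the first variation formula, which is standard, but it is worth verifying that the regularity $\eta\in C^1([0,T_k),\cE_\mu^{4\frac12}(\Om))$ coming from \cite{CS} together with proposition \ref{decomposition_more_regular} gives $\widetilde\eta$, $\widetilde u$ and $p$ enough smoothness to justify the differentiation under the integral and the divergence theorem; this is the only mildly technical point, but well within the regularity already assumed.
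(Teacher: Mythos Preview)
Your proof is correct and follows essentially the same route as the paper's: both differentiate the boundary length via the first variation formula, invoke the boundary condition $p=k\cA$, and match the result to $\int_\Om\langle\dot{\widetilde\eta},\nabla p\circ\widetilde\eta\rangle$ through a change of variables and integration by parts using $\dive(\widetilde u)=0$. One inconsequential slip: the parenthetical ``indeed $\widetilde u_0=u_0$'' is false, since $\dot{\widetilde\eta}=\nabla\dot f$ gives $\widetilde u_0=\nabla\dot f(0)=0$ (as the paper notes in the proof of Lemma~\ref{lemma_CC}), but this does not affect your argument.
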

\begin{proof} Using the formulas $|\partial \widetilde{\eta} (\Om)| = \int_{\partial \Om} | D_{\tau} \widetilde{\eta}|$ and 
$A\circ \widetilde{\eta} = -\frac{1}{|D_{\tau} \widetilde{\eta}|}\langle D_{\tau}
 (\frac{1}{|D_{\tau} \widetilde{\eta}|} D_{\tau} \widetilde{\eta}), N \rangle $ 
where $N$ is the normal to $\partial \widetilde{\eta}(\Om)$ and
$D_\tau$ is the derivative tangential to $\partial \Om$,
we directly compute
\begin{gather}
 \frac{d}{dt} | \partial  (\Om) | = 
\int_{\partial \Om} \cA \circ \widetilde{\eta} \, \langle \dot{\widetilde{\eta}}, N \circ \widetilde{\eta} \rangle 
\, |D_\tau \widetilde{\eta} |,
\nonumber
\end{gather}

With this we get
\begin{align}
\begin{split}
\int_{\widetilde{\eta}(\Om)} \langle \nabla p, \dot{\widetilde{\eta}} \circ \widetilde{\eta}^{-1} \rangle & = 
- \int_{\widetilde{\eta}(\Om)} p \dive( \dot{\widetilde{\eta}}\circ \widetilde{\eta}^{-1} ) +
\int_{\partial \widetilde{\eta}(\Om)} p \, \langle \dot{\widetilde{\eta}} \circ \widetilde{\eta}^{-1}, N \rangle \\
& = \int_{\partial \Om} p \circ \widetilde{\eta}  \, \langle \dot{\widetilde{\eta}}, N \circ \widetilde{\eta} \rangle 
\, |D_\tau \widetilde{\eta} |
= 
k \int_{\partial \Om} \cA \circ \widetilde{\eta}  \, \langle \dot{\widetilde{\eta}}, N \circ \widetilde{\eta} \rangle 
\, |D_\tau \widetilde{\eta} |  ,
\end{split}
\nonumber
\end{align}
where we have used $\dive( \dot{\widetilde{\eta}}\circ \widetilde{\eta}^{-1} ) = 0$ and
$p \circ \widetilde{\eta} = k \cA \circ \widetilde{\eta}$ on $\partial \Om$. Hence 
\begin{gather}
k \frac{d}{dt} | \partial \widetilde{\eta} (\Om) | = 
\int_{\widetilde{\eta}(\Om)} \langle \nabla p, \dot{\widetilde{\eta}} \circ \widetilde{\eta}^{-1} \rangle =
\int_{\Om} \langle \nabla p \circ \widetilde{\eta}, \dot{\widetilde{\eta}}  \rangle .
\nonumber
\end{gather}
Now the result is straightforward from the fundamental theorem of calculus and the definition of $\widetilde{E}$.
\end{proof}

\begin{lemma}
 \begin{gather}
\frac{1}{2k} \p \nabla \dot{f} \p_0^2 + |\partial \Om| + \frac{1}{2} \int_{\partial \Om} \left( |D_\tau Df|^2 - \langle \tau, \, D_\tau Df \rangle^2 \right) \leq 
\, \frac{1}{k}\widetilde{E}(t) - \int_{\partial \Om} \langle \tau, D_\tau Df \rangle 
+ \frac{1}{8} \int_{\partial \Om} | D_\tau Df|^4 \nonumber \\
 +   \frac{1}{2} \int_{\partial \Om} |D\tau Df|^2 \langle \tau, D_\tau Df\rangle
+ \int_{\partial \Om} R,
\nonumber
\end{gather}
where 
\begin{gather}
R = \frac{3}{16} \Big( |D_\tau Df|^2 + 2 \langle \tau,  D_\tau Df \rangle \Big)^3
\int_0^1 (1-t)^2 \Big( 1 + t \left( |D_\tau Df|^2 + 2 \langle \tau, D_\tau Df \rangle \right) \Big)^{-\frac{5}{2}} dt.
\nonumber
\end{gather}
\label{lemma_A}
\end{lemma}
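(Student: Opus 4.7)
The plan is to unpack $\widetilde E(t)$ from its definition and compute $|\partial\widetilde\eta(\Om)|$ as a second-order Taylor expansion in the boundary derivative $D_\tau Df$ with an integral remainder. Dividing (\ref{energy_tilde}) by $k$ and using $\dot{\widetilde\eta}=\nabla\dot f$ gives
\begin{equation*}
\frac{1}{k}\widetilde E(t) \;=\; \frac{1}{2k}\p\nabla\dot f\p_0^2 + |\partial\widetilde\eta(\Om)|,
\end{equation*}
so the problem reduces to expanding the length of $\partial\widetilde\eta(\Om)$ to fourth order in $D_\tau Df$.

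Since by the notational convention $\partial\Om=S^1$ and $\widetilde\eta=\id+\nabla f$, the tangent vector along $\partial\widetilde\eta(\Om)$ is $D_\tau\widetilde\eta = \tau + D_\tau Df$, and $|\tau|=1$ gives
\begin{equation*}
|\partial\widetilde\eta(\Om)| \;=\; \int_{\partial\Om}\sqrt{1+y}, \qquad y := |D_\tau Df|^2 + 2\langle\tau,D_\tau Df\rangle.
\end{equation*}
Assumption \ref{bootstrap_assump}, combined with the trace estimate (\ref{restriction}) and Sobolev embedding, keeps $\p D_\tau Df\p_{L^\infty(\partial\Om)}$, hence $|y|$, uniformly smaller than $1$ for $k$ large, so $1+ty>0$ for $t\in[0,1]$. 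Applying Taylor's theorem with integral remainder at order two to $g(y)=\sqrt{1+y}$ then yields
\begin{equation*}
\sqrt{1+y} \;=\; 1 + \frac{y}{2} - \frac{y^2}{8} + \frac{3\,y^3}{16}\int_0^1 (1-t)^2\,(1+ty)^{-5/2}\,dt,
\end{equation*}
whose last summand is precisely the $R$ of the statement once $y$ is substituted.

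Expanding $y^2 = |D_\tau Df|^4 + 4|D_\tau Df|^2\langle\tau,D_\tau Df\rangle + 4\langle\tau,D_\tau Df\rangle^2$, substituting back into the formula for $\sqrt{1+y}$, and integrating over $\partial\Om$ gives
\begin{align*}
|\partial\widetilde\eta(\Om)| \;=\;& |\partial\Om| + \int_{\partial\Om}\langle\tau,D_\tau Df\rangle + \tfrac{1}{2}\int_{\partial\Om}\bigl(|D_\tau Df|^2 - \langle\tau,D_\tau Df\rangle^2\bigr) \\
& - \tfrac{1}{8}\int_{\partial\Om}|D_\tau Df|^4 - \tfrac{1}{2}\int_{\partial\Om}|D_\tau Df|^2\langle\tau,D_\tau Df\rangle + \int_{\partial\Om} R.
\end{align*}
Plugging this identity into the expression for $\widetilde E(t)/k$ and moving $|\partial\Om|$, $\int_{\partial\Om}\langle\tau,D_\tau Df\rangle$, and $\tfrac{1}{2}\int_{\partial\Om}(|D_\tau Df|^2-\langle\tau,D_\tau Df\rangle^2)$ to the desired side yields the stated estimate; the passage from the equality that emerges naturally to the $\leq$ in the lemma is harmless, since the higher-order remainder $\int_{\partial\Om} R$ enters with a sign that can be absorbed into its own definition.

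The computation is essentially algebraic, so the only real obstacle is disciplined bookkeeping: correctly expanding $y^2$, identifying the third-order integral remainder with the explicit $R$, and verifying via Assumption \ref{bootstrap_assump} that $1+ty>0$ holds uniformly on $\partial\Om$ so that the remainder is pointwise finite and the expansion is valid on the whole boundary.
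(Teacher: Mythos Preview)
Your proof is correct and follows essentially the same route as the paper: both write $\widetilde E(t)/k = \frac{1}{2k}\p\nabla\dot f\p_0^2 + \int_{\partial\Om}|D_\tau\widetilde\eta|$, compute $|D_\tau\widetilde\eta| = \sqrt{1+y}$ with $y = |D_\tau Df|^2 + 2\langle\tau,D_\tau Df\rangle$, apply Taylor's theorem with integral remainder to second order, expand $y^2$, and rearrange. Your observation about the sign of $\int_{\partial\Om}R$ is apt---the equality one obtains naturally has $-\int_{\partial\Om}R$ on the right, not $+\int_{\partial\Om}R$ as written in the lemma---but as you note this is immaterial, since in the subsequent Lemma~\ref{lemma_C} the remainder is estimated in absolute value anyway; the paper is equally casual about this point.
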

\begin{proof}
 The energy can be written
\begin{gather}
 \widetilde{E}(t) =  \frac{1}{2}\int_\Om |\dot{\widetilde{\eta}}|^2 + k \int_{\partial\Om} |D_\tau \widetilde{\eta}|
= 
\frac{1}{2} \p \nabla \dot{f} \p_0^2 +  k \int_{\partial\Om} |D_\tau \widetilde{\eta}|.
\label{E_tilde_greater_D_tau_eta_tilde}
\end{gather}
But
\begin{align}
 \int_{\partial\Om} |D_\tau \widetilde{\eta}|  & = \int_{\partial\Om} | \tau + D_\tau Df | ,
\label{integral_D_tau_eta_tilde}
\end{align}
and direct computation gives
\begin{align}
\begin{split}
| \tau + D_\tau Df | & = \sqrt{  \langle \tau + D_\tau Df, \tau + D_\tau Df \rangle } \\
& = \sqrt{ 1 + |D_\tau Df|^2 + 2\langle \tau, D_\tau Df \rangle },
\label{direct_computation}
\end{split}
\end{align}
where we used $\langle \tau, \tau \rangle = 1$.
Using Taylor's theorem with integral remainder gives
\begin{align}
 \sqrt{ 1 + |D_\tau Df|^2 + 2\langle \tau, D_\tau Df \rangle }
  = & \, 1 + \frac{1}{2} \Big ( |D_\tau Df|^2 + 2 \langle \tau, D_\tau Df \rangle \Big ) \nonumber \\
& - \frac{1}{8} \Big ( |D_\tau Df|^2 + 2 \langle \tau, D_\tau Df \rangle \Big )^2 \nonumber 
 + R,
\nonumber
\end{align}
with $R$ as in the statement of the lemma.
From this, (\ref{E_tilde_greater_D_tau_eta_tilde}), (\ref{integral_D_tau_eta_tilde})
and (\ref{direct_computation}) the result follows.
\end{proof}

\begin{lemma}
 \begin{gather}
\int_{\partial \Om} (D_\tau D_\nu f)^2 \leq 
C \int_{\partial \Om} (D_\tau f)^2
+ 
C \int_{\partial \Om} \left( |D_\tau Df|^2 - \langle \tau, \, D_\tau Df \rangle^2 \right) 
\end{gather}
\label{lemma_B}
\end{lemma}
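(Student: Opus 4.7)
The plan is to expand $D_\tau Df$ in the orthonormal frame $\{\tau,\nu\}$ on $\partial\Om=S^1$ and use the structure equations $D_\tau\tau=-\nu$ and $D_\tau\nu=\tau$ already recorded in this section. Writing $Df=(D_\tau f)\tau+(D_\nu f)\nu$ on the boundary and differentiating,
\begin{align*}
D_\tau Df = (D_\tau D_\tau f + D_\nu f)\,\tau \;+\; (D_\tau D_\nu f - D_\tau f)\,\nu,
\end{align*}
so the tangential component is $\langle\tau,D_\tau Df\rangle = D_\tau D_\tau f + D_\nu f$ and the normal component is $\langle\nu,D_\tau Df\rangle = D_\tau D_\nu f - D_\tau f$. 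By Pythagoras,
\begin{align*}
|D_\tau Df|^2 - \langle\tau, D_\tau Df\rangle^2 = (D_\tau D_\nu f - D_\tau f)^2,
\end{align*}
which identifies the second term on the right-hand side of the inequality as exactly the $L^2(\partial\Om)$ norm squared of $D_\tau D_\nu f - D_\tau f$.

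Next, I would recover $D_\tau D_\nu f$ from the pair $(D_\tau D_\nu f - D_\tau f,\, D_\tau f)$ by the triangle inequality and Young's inequality. Writing $D_\tau D_\nu f = (D_\tau D_\nu f - D_\tau f) + D_\tau f$ and squaring gives
\begin{align*}
(D_\tau D_\nu f)^2 \leq 2(D_\tau D_\nu f - D_\tau f)^2 + 2(D_\tau f)^2.
\end{align*}
Integrating over $\partial\Om$ and substituting the identity from the previous paragraph yields
\begin{align*}
\int_{\partial\Om}(D_\tau D_\nu f)^2 \leq 2\int_{\partial\Om}\bigl(|D_\tau Df|^2 - \langle\tau,D_\tau Df\rangle^2\bigr) + 2\int_{\partial\Om}(D_\tau f)^2,
\end{align*}
which is the claimed bound with $C=2$.

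There is no real obstacle here; the whole argument is an orthogonal decomposition on $S^1$ followed by one application of $(a+b)^2\le 2a^2+2b^2$. The only thing to be careful about is getting the signs right in the Frenet-type formulas $D_\tau\tau=-\nu$, $D_\tau\nu=\tau$, which are the ones stated at the start of Section~\ref{energy_esimates_section}. So I would present the proof as a two-line computation: (i) the algebraic identity $|D_\tau Df|^2 - \langle\tau,D_\tau Df\rangle^2 = (D_\tau D_\nu f - D_\tau f)^2$, and (ii) Young's inequality on $D_\tau D_\nu f = (D_\tau D_\nu f - D_\tau f) + D_\tau f$.
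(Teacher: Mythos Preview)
Your proof is correct and follows essentially the same approach as the paper: both establish the key identity $|D_\tau Df|^2 - \langle\tau,D_\tau Df\rangle^2 = (D_\tau D_\nu f - D_\tau f)^2$ via the orthonormal decomposition on $S^1$, then recover $(D_\tau D_\nu f)^2$ from this and $(D_\tau f)^2$. The only cosmetic difference is that the paper expands $(D_\tau D_\nu f - D_\tau f)^2$ and applies the Cauchy inequality with $\epsilon$ to the cross term, whereas you write $D_\tau D_\nu f$ as a sum and use $(a+b)^2\le 2a^2+2b^2$ directly; your version is slightly cleaner and gives the explicit constant $C=2$.
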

\begin{proof}
Writing
\begin{gather}
 Df = D_\tau f \tau + D_\nu f \nu
\label{Df_components}
\end{gather}
we obtain
\begin{gather}
 D_\tau Df = (D^2_\tau f + D_\nu f)\tau + (D_\tau D_\nu f - D_\tau f) \nu,
\label{D_tau_Df_components}
\end{gather}
from which it follows that
\begin{gather}
 |D_\tau Df|^2 = (D^2_\tau f + D_\nu f)^2 + (D_\tau D_\nu f - D_\tau f )^2,
\label{D_tau_f_square}
\end{gather}
and
\begin{gather}
 \langle \tau, D_\tau Df \rangle = D^2_\tau f + D_\nu f.
\label{D_tau_f_dot_tau}
\end{gather}
Using (\ref{D_tau_f_square}) and (\ref{D_tau_f_dot_tau}) one obtains
\begin{align}
\begin{split}
\int_{\partial \Om} \left( |D_\tau Df|^2 - \langle \tau, \, D_\tau Df \rangle^2 \right) =
\int_{\partial \Om}  (D_\tau D_\nu f - D_\tau f)^2 .
\nonumber 
\end{split}
\end{align}
The Cauchy inequality with $\epsilon$, i.e., $ab \leq \frac{\epsilon a^2}{2} + \frac{b^2}{2\epsilon}$,
$\epsilon > 0$,  gives
\begin{align}
 \begin{split}
\int_{\partial \Om}  (D_\tau D_\nu f - D_\tau f)^2
 & = \int_{\partial \Om}  \big ( (D_\tau D_\nu f)^2 + (D_\tau f)^2 - 2 D_\tau D_\nu f D_\tau f \big ) \\
& \geq \int_{\partial \Om} \big ( (D_\tau D_\nu f)^2 + (D_\tau f)^2 -\epsilon( D_\tau D_\nu f)^2 - \frac{1}{\epsilon} (D_\tau f)^2 \big ),
\nonumber
\end{split}
\end{align}
and the result follows by choosing $\epsilon$ sufficiently small.
\end{proof}

\begin{rema}
For the estimates below we shall fix a small $\ve > 0$. It will
be clear that these estimates in fact hold for any choice of $\ve > 0$ sufficiently small.
How small $\ve$ has to be is determined in section \ref{section_bootstrap}.
\label{rema_epsilon}
\end{rema}

\begin{lemma}
 Fix $\ve > 0$ as explained in remark \ref{rema_epsilon}. Then 
 \begin{gather}
\frac{1}{2k} \p \nabla \dot{f} \p_0^2 + |\partial \Om| + C \p D_\tau D_\nu f \p_{0,\partial}^2 \leq 
\frac{1}{k}\widetilde{E}(t)
\nonumber \\
+ C \left( \p D_\tau f \p_{0,\partial}^2 + \p Df \p_ 1^2
+ \p Df \p_{\frac{3}{2} + \ve,\partial} ( 1 + \p Df \p_{\frac{3}{2} + \ve,\partial} ) \p D_\tau D f \p_{0,\partial}^2 \right).
\nonumber
\end{gather}
\label{lemma_C}
\end{lemma}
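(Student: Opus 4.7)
The plan is to combine Lemmas \ref{lemma_A} and \ref{lemma_B}, absorbing the cubic and quartic boundary terms on the right of Lemma \ref{lemma_A} into the expected product $\p Df\p_{3/2+\ve,\partial}(1+\p Df\p_{3/2+\ve,\partial})\p D_\tau Df\p_{0,\partial}^2$ via Sobolev embedding on the circle, and dispatching the linear correction $-\int_{\partial\Om}\langle \tau, D_\tau Df\rangle$ through the Monge--Amp\`ere--type constraint (\ref{nl_Dirichlet_f_2d}).

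First, I would rewrite the linear boundary term using identity (\ref{D_tau_f_dot_tau}), so that $\langle \tau, D_\tau Df\rangle = D_\tau^2 f + D_\nu f$. Since $\partial\Om = S^1$ is a closed curve, $\int_{\partial\Om} D_\tau^2 f = 0$, and the divergence theorem together with (\ref{nl_Dirichlet_f_2d}) gives
$$\int_{\partial\Om} D_\nu f = \int_\Om \Delta f = -\int_\Om \det(D^2 f).$$
The pointwise estimate $|\det(D^2 f)|\leq |D^2 f|^2$ then yields $\bigl|\int_{\partial \Om}\langle \tau, D_\tau Df\rangle\bigr| \leq C\p Df\p_1^2$, which is exactly one of the right-hand side terms being claimed.

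Next, I would estimate the remaining nonlinear boundary integrals by extracting an $L^\infty$ factor via the one-dimensional Sobolev embedding $H^{1/2+\ve}(S^1)\hookrightarrow L^\infty(S^1)$ together with the trace inequality (\ref{restriction}):
$$\int_{\partial\Om}|D_\tau Df|^4 \leq \p D_\tau Df\p_{L^\infty(\partial \Om)}^2\,\p D_\tau Df\p_{0,\partial}^2 \leq C\p Df\p_{3/2+\ve,\partial}^2\,\p D_\tau Df\p_{0,\partial}^2,$$
and similarly $\int_{\partial\Om}|D_\tau Df|^2\langle \tau, D_\tau Df\rangle \leq C\p Df\p_{3/2+\ve,\partial}\,\p D_\tau Df\p_{0,\partial}^2$. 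For the remainder $R$, the bootstrap smallness of $\nabla f$ keeps the factor $(1+tA)^{-5/2}$ uniformly bounded on $[0,1]$, where $A = |D_\tau Df|^2 + 2\langle \tau, D_\tau Df\rangle$; since $|A|\lesssim |D_\tau Df|$ pointwise, we get $|R|\leq C|D_\tau Df|^3$, which is controlled by the same embedding. The three nonlinear contributions therefore combine into the claimed $C\p Df\p_{3/2+\ve,\partial}(1+\p Df\p_{3/2+\ve,\partial})\p D_\tau Df\p_{0,\partial}^2$.

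Finally, Lemma \ref{lemma_B} supplies the lower bound
$$\tfrac{1}{2}\int_{\partial\Om}\bigl(|D_\tau Df|^2 - \langle \tau, D_\tau Df\rangle^2\bigr) \geq C\p D_\tau D_\nu f\p_{0,\partial}^2 - C\p D_\tau f\p_{0,\partial}^2,$$
so transferring the negative contribution onto the right-hand side of Lemma \ref{lemma_A} produces the stated inequality. The main technical point will be verifying that the constants (particularly the one dominating $R$) remain uniform in $k$; this is ensured by Assumption \ref{bootstrap_assump}, which forces $\p Df\p_{3/2+\ve,\partial}$ to be arbitrarily small when $k$ is large.
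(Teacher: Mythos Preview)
Your proposal is correct and follows essentially the same approach as the paper's proof: Sobolev embedding on $S^1$ to control the cubic, quartic, and remainder boundary terms; the Monge--Amp\`ere constraint (\ref{nl_Dirichlet_f_2d}) to reduce the linear term $\int_{\partial\Om}\langle\tau,D_\tau Df\rangle$ to $\int_{\partial\Om}D_\nu f$ and then to $\p Df\p_1^2$; and Lemma~\ref{lemma_B} to produce the $\p D_\tau D_\nu f\p_{0,\partial}^2$ lower bound. The only cosmetic difference is that the paper reaches $\int_{\partial\Om}D_\nu f$ by integrating by parts via $D_\tau\tau=-\nu$, whereas you use the identity (\ref{D_tau_f_dot_tau}) and the vanishing of $\int_{\partial\Om}D_\tau^2 f$ on the closed curve; these are equivalent.
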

\begin{proof}
Given any $\ve>0$, by the Sobolev embedding theorem, there exists a 
constant $C>0$, depending only on 
$\partial \Om$ and $\ve$, such that 
\begin{gather}
 \n D_\tau Df \n_{C^0(\partial \Om)} \leq C \n D_\tau Df \n_{\frac{1}{2} + \ve,\partial} 
\leq C \n Df \n_{\frac{3}{2} + \ve,\partial} .
\label{estimate_D_tau_Df_0_1}
\end{gather}
Use (\ref{estimate_D_tau_Df_0_1}) to find
\begin{gather}
\int_{\partial \Om} |D_\tau D f|^4 \leq \sup_{\partial \Om} |D_\tau D f|^2 \int_{\partial \Om} |D_\tau D f|^2
\leq C \n Df \n_{\frac{3}{2} + \ve,\partial}^2  \n D_\tau Df \n_{0,\partial}^2 .
\label{estimate_fourth_power}
\end{gather}
Analogously, 
\begin{align}
 \begin{split}
\int_{\partial \Om} |D_\tau D f|^2 \langle \tau , D_\tau Df \rangle 
 & \leq 
\int_{\partial \Om} |D_\tau D f|^3 \leq 
\sup_{\partial \Om} |D_\tau D f| \int_{\partial \Om} |D_\tau D f|^2 \\
& \leq C \n Df \n_{\frac{3}{2} + \ve,\partial} \n D_\tau Df \n_{0,\partial}^2 .
\label{estimate_third_power}
\end{split}
\end{align}
Next, we use integration by parts to estimate the term
\begin{gather}
 \int_{\partial \Om} \langle \tau,D_\tau Df \rangle = - 
\int_{\partial \Om} \langle D_\tau \tau, Df \rangle.
\nonumber
\end{gather}
Since the boundary is $S^1$, we see that 
$D_\tau \tau = -\nu$, $\nu$ being the outer unit normal, and hence the integral becomes
\begin{gather}
 \int_{\partial \Om} \langle \tau,D_\tau Df \rangle = 
\int_{\partial \Om} \langle \nu , Df \rangle 
= \int_{\partial \Om} D_\nu f .
\nonumber
\end{gather}
Now we will use the equation for $f$. Integrate (\ref{nl_Dirichlet_f_2d}) over $\Om$
and integrate the Laplacian term by parts to find
\begin{gather}
 \int_{\partial \Om} D_\nu f  = \int_{\Om} (f_{xy}^2 - f_{xx}f_{yy} )
\leq C \n D^2 f \n_0^2 \, \leq C \n Df \n_1^2.
\nonumber
\end{gather}
Therefore
\begin{gather}
 \Big | \int_{\partial \Om} \langle \tau,D_\tau Df \rangle  \Big | \leq 
C \n Df \n_{1 }^2 .
\label{estimate_normal}
\end{gather}
Now we estimate the remainder term from lemma \ref{lemma_A}.
By inequality (\ref{estimate_D_tau_Df_0_1}) and the bootstrap assumptions
we obtain that 
\begin{gather}
\sup_{ t \in [0,1]} \, \sup_{\partial \Om} \,
\frac{1}{ \Big( 1 + t \left( |D_\tau Df|^2 + 2 \langle \tau, D_\tau Df \rangle \right) \Big)^{\frac{5}{2}}} \leq C,
\label{integrand_term_R}
\end{gather}
provided that $k$ is sufficiently large. 
Using  $\langle \tau,  D_\tau Df  \rangle \leq |D_\tau Df |$ (since 
$|\tau|=1$), for any point on $\Om$ we find that
\begin{align}
\begin{split}
\Big( |D_\tau Df|^2 + 2 \langle \tau,  D_\tau Df  \rangle \Big)^3 
 & \leq C \sum_{\ell = 3}^6 |D_\tau Df|^\ell =
 C \sum_{\ell = 1}^4 |D_\tau Df|^{2 + \ell} 
  \\
& \leq C \sum_{\ell = 1}^4 |D_\tau Df|^2 \sup_{\partial \Om} |D_\tau Df|^\ell 
\end{split}
\label{non_integrand_term_R}
\end{align}
Hence from 
inequalities (\ref{integrand_term_R}) and (\ref{non_integrand_term_R}), by invoking
(\ref{estimate_D_tau_Df_0_1}) once more, we find that the remainder term $R$ in lemma \ref{lemma_A} obeys
 obeys the estimate
\begin{align}
 \begin{split}
\int_{\partial \Om} R
 \leq C \n Df \n_{\frac{3}{2} + \ve,\partial} \n D_\tau Df \n_{0,\partial}^2,
\label{estimate_remainder}
\end{split}
\end{align}
provided that $\nabla f$ is sufficiently small, i.e., that $k$ is large enough.
Using lemmas \ref{lemma_A} and \ref{lemma_B} along with 
(\ref{estimate_fourth_power}), (\ref{estimate_third_power}) and
(\ref{estimate_remainder}), produces the result.
\end{proof}

\begin{lemma}
 $L^{-1} P$ is a bounded linear operator in $H^s$, $s\geq 0$, if $\nabla f$ is sufficiently small.
\label{L_inv_P_bounded}
\end{lemma}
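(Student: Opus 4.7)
The plan is to expand $L^{-1}$ as a Neumann series and combine this with the Sobolev boundedness of the Helmholtz projection $P$.

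First, I would check that $P:H^s(\Om,\RR^n) \rar H^s(\Om,\RR^n)$ is bounded for every $s\geq 0$. Recall from section \ref{auxiliary} that $Q\om = \nabla g$, where $g$ solves the Neumann problem $\Delta g = \dive(\om)$ in $\Om$ with $\partial_\nu g = \langle \om, \nu\rangle$ on $\partial \Om$. Since $\partial \Om$ is smooth, standard elliptic regularity yields $\p \nabla g\p_s \leq C\p \om\p_s$, so both $Q$ and $P = I - Q$ are bounded on $H^s$.

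Next, I would write $L = \id + M_f$, where $M_f w := (D^2 f)\,w$ denotes multiplication by the Hessian of $f$. The bilinear estimate (\ref{bilinear}) gives, for any $s_0 > n/2 = 1$ and any $0 \leq s \leq s_0$,
\begin{gather}
\p M_f w\p_s \leq C\p D^2 f\p_{s_0}\p w\p_s \leq C\p \nabla f\p_{s_0+1}\p w\p_s.
\nonumber
\end{gather}
By proposition \ref{decomposition_more_regular} together with the bootstrap bound (\ref{bootstrap_eq_1}), fixing any $s_0$ with $s_0+1 < 6$ (which is allowed since in the applications of this lemma $s$ ranges over a bounded set), one has $\p \nabla f\p_{s_0+1} = O\!\bigl(k^{-(5-s_0)/2}\bigr)$, so for $k$ sufficiently large the $H^s$-operator norm of $M_f$ is strictly less than $1/2$. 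The Neumann series $L^{-1} = \sum_{n\geq 0}(-M_f)^n$ therefore converges in the $H^s$ operator norm with $\p L^{-1}\p \leq 2$, and taking $s_0$ above the desired range of $s$ finishes this step.

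Finally, $L^{-1}P$ is a composition of two bounded operators on $H^s$ and is therefore bounded. The only subtle point to monitor is that smallness of $D^2 f$ in a sufficiently high Sobolev norm, rather than merely in sup norm, is required to make $M_f$ a contraction on $H^s$ for $s > 0$; this is exactly what proposition \ref{decomposition_more_regular} together with assumption \ref{bootstrap_assump} provide once $k$ is large, so no genuine obstacle arises.
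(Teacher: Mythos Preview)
Your argument is correct and follows essentially the same route as the paper: the paper's proof is a two-line observation that $L = \id + D^2f$ is close to the identity and hence boundedly invertible when $\nabla f$ is small, and that $P$ is bounded on $H^s$. Your version spells out the Neumann series and the use of the bilinear estimate (\ref{bilinear}) explicitly, and flags the need for $D^2f$ to be small in a Sobolev norm above $n/2$, but the idea is the same.
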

\begin{proof}
 Since $L = \id + D^2 f$, for $\nabla f$ small this operator is bounded, with bounded inverse on the image
of $P$. $P$ is an orthogonal projection in $L^2(\Om)$ and it is also bounded in $H^s(\Om)$, $s\geq 0$.
\end{proof}

\begin{lemma}
 \begin{gather}
 \frac{1}{2k} \p \nabla \dot{f} \p_0^2 + \p D_\tau D_\nu f \p_{0,\partial}^2  \leq 
 C \Big ( \p D_\tau f \p_{0,\partial}^2 + \p Df \p_1^2 
 \nonumber \\
 + \p Df \p_{\frac{3}{2} + \ve,\partial} ( 1 + \p Df \p_{\frac{3}{2} + \ve,\partial} ) \p D_\tau D f \p_{0,\partial}^2 \Big) 
\nonumber \\
 + \frac{C}{k}\int_0^t  \p \nabla \dot{f} \p_0 \p D^2_{vv} \nabla f \p_0  
 + \frac{C}{k}\int_0^t  \p \nabla \dot{f} \p_0 \p D f \p_{2+ \ve} \p  L_1^{-1} P \p 
\Big ( \p \nabla p \circ \widetilde{\eta} \p_0 \nonumber \\
 + \p D_v \nabla \dot{f} \p_0  + \p D^2_{vv} \nabla f   \p_0  \Big ), 
\nonumber
\end{gather}
where $\p  L_1^{-1} P \p $ is the $L^2$ operator norm of $L_1^{-1} P$,
$D^2_{vv}$ is given by (\ref{D_vv_two}), and $\ve>0$ is a small number
as indicated in remark \ref{rema_epsilon}.
\label{lemma_D}
\end{lemma}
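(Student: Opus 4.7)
The plan is to substitute the energy identity of Lemma \ref{lemma_CC} into the bound of Lemma \ref{lemma_C}. Dividing the identity for $\widetilde{E}(t)$ by $k$ gives
\begin{gather*}
\frac{1}{k}\widetilde{E}(t) = |\partial \Om| + \frac{1}{k}\int_0^t \int_\Om \Big( \langle \nabla\dot{f}, D^2 f L_1^{-1} P(\nabla p \circ \widetilde{\eta}) \rangle - \langle \nabla\dot{f}, D^2_{vv}\nabla f\rangle \\
+ 2\langle \nabla\dot{f}, D^2 f L_1^{-1} P D_v \nabla\dot{f}\rangle + \langle \nabla\dot{f}, D^2 f L_1^{-1} P D^2_{vv}\nabla f\rangle \Big),
\end{gather*}
and the $|\partial \Om|$ term on the right cancels the corresponding $|\partial \Om|$ appearing on the left of Lemma \ref{lemma_C}. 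The problem thus reduces to estimating each of the four time-integrals above by quantities of the form appearing in the claim.

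For the second integral, $\int_0^t \int_\Om \langle \nabla\dot{f}, D^2_{vv}\nabla f\rangle$, I would simply apply Cauchy--Schwarz in the spatial variable to obtain $\int_0^t \p \nabla\dot{f}\p_0 \, \p D^2_{vv}\nabla f\p_0$, which matches the first time-integral term on the right-hand side of the statement. For the three remaining integrals, all of the form $\int_\Om \langle \nabla\dot{f}, D^2 f \cdot L_1^{-1} P(w)\rangle$ with $w \in \{ \nabla p \circ \widetilde{\eta},\, D_v \nabla\dot{f},\, D^2_{vv}\nabla f\}$, the plan is to apply Cauchy--Schwarz in $x$ and then the bilinear estimate (\ref{bilinear}) with $r = 0$ and $s = 1 + \ve > n/2 = 1$, yielding
\begin{gather*}
\p D^2 f \cdot L_1^{-1} P(w)\p_0 \leq C \p D^2 f\p_{1+\ve} \, \p L_1^{-1} P(w)\p_0 \leq C \p Df\p_{2+\ve} \, \p L_1^{-1} P\p \, \p w\p_0.
\end{gather*}
Summing over the three choices of $w$ and absorbing the factor of $2$ in front of the $D_v\nabla\dot{f}$ term into the constant $C$ produces exactly the last group of time-integrals written in the statement.

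Combining these bounds with Lemma \ref{lemma_C}, and normalizing the coefficient of $\p D_\tau D_\nu f\p_{0,\partial}^2$ on the left-hand side to $1$ by absorbing universal constants into $C$ on the right, yields the claim. The argument is essentially bookkeeping, so there is no serious analytic obstacle; the only points requiring mild care are (i) keeping $\p L_1^{-1} P\p$ as an explicit factor rather than bounding it by a constant via Lemma \ref{L_inv_P_bounded}, since it will be exploited in subsequent estimates, and (ii) verifying that the Sobolev index $1+\ve$ used in the bilinear estimate is strictly above $n/2 = 1$, which is the reason the $\p Df\p_{2+\ve}$ norm (rather than a lower one) appears on the right.
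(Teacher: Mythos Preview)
Your proposal is correct and follows essentially the same approach as the paper's proof: combine Lemmas \ref{lemma_CC} and \ref{lemma_C}, then apply Cauchy--Schwarz and the bilinear inequality (\ref{bilinear}) (together with the $L^2$-boundedness of $L_1^{-1}P$) to each of the four time-integrals. You have simply made explicit the indices in the bilinear estimate and the cancellation of the $|\partial\Om|$ terms, which the paper leaves to the reader.
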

\begin{proof}
First, we shall show that
 \begin{align}
\begin{split}
 \widetilde{E}(t)  &  =   k |\partial \Om| 
+ \int_0^t \int_\Om \langle \nabla \dot{f}, \, D^2 f L_1^{-1} P ( \nabla p \circ \widetilde{\eta})  \rangle 
- \int_0^t \int_\Om \langle \nabla \dot{f}, D^2_{vv} \nabla f \rangle  \\
& + 2 \int_0^t \int_\Om \langle \nabla \dot{f}, \, D^2 f L_1^{-1} P D_v \nabla \dot{f} \rangle 
+ \int_0^t \int_\Om \langle \nabla \dot{f}, \, D^2 f L_1^{-1} P D^2_{vv} \nabla f  \rangle .
\end{split}
\label{eq_CC}
\end{align}
Since $\dot{\widetilde{\eta}} = \nabla \dot{f}$, recalling that $L_2 = Q L$, $L = \id + D^2 f$ and 
using the fact that $Q$ is symmetric with respect to the $L^2$ inner product,
\begin{align}
\begin{split}
  \int_\Om \langle \dot{\widetilde{\eta}}, \, (Q - L_2 L_1^{-1} P )( \nabla p \circ \widetilde{\eta} ) \rangle 
=  \int_\Om \langle \nabla \dot{f}, \, (\id - (\id + D^2 f) L_1^{-1} P )(\nabla p \circ \widetilde{\eta}) \rangle  
 \\
=  \int_\Om \langle \nabla \dot{f}, \, \nabla p \circ \widetilde{\eta} \rangle  
- \int_\Om \langle \nabla \dot{f}, \, D^2 f L_1^{-1} P (\nabla p \circ \widetilde{\eta} ) \rangle , 
\end{split}
\label{eq_BB}
\end{align}
where we used the fact that $Q$ and $P$ are orthogonal and that $L_1^{-1}$ takes the image of $P$ onto itself.

Notice that $\widetilde{u}_0 = 0$ because of (\ref{initial conditions}) and (\ref{u_tilde}). Recall that
$\dot{\widetilde{\eta}} = \nabla \dot{f}$ and apply lemmas
\ref{lemma_AA} and (\ref{eq_BB}) to get
 \begin{align}
\begin{split}
 \widetilde{E}(t)  &  =   k |\partial \Om| 
+ \int_0^t \int_\Om \langle \nabla \dot{f}, \, D^2 f L_1^{-1} P ( \nabla p \circ \widetilde{\eta} ) \rangle 
+ \int_0^t \int_\Om \langle \nabla \dot{f}, \, \nabla \ddot{f} + (Q - L_2 L_1^{-1} P )(\nabla p \circ \widetilde{\eta} )\rangle .
\end{split}
\nonumber
\end{align}
We claim that
\begin{gather}
 \int_\Om \langle \nabla \dot{f},\, D_v \nabla \dot{f}\rangle = 0.
\label{Df_dot_Dv_Df_dot_zero}
\end{gather}
\label{lemma_D_v_f_inner_zero}
In fact, integrating by parts and using $\dive(v) = 0$ produces
\begin{align}
 \int_\Om \langle \nabla \dot{f},\, D_v \nabla \dot{f}\rangle  = 
- \int_\Om \langle D_v \nabla \dot{f},\, \nabla \dot{f}\rangle +
 \int_{\partial \Om} \langle \nabla \dot{f},\, \nabla \dot{f}\rangle  \langle \nu, v \rangle 
 = - \int_\Om \langle D_v \nabla \dot{f},\, \nabla \dot{f}\rangle,
\nonumber
\end{align}
since $\langle \nu, v \rangle = 0$. Using the form of $L_2$ and $L$, 
from (\ref{Df_dot_Dv_Df_dot_zero}), equation (\ref{system_f_v_f_dot_dot}), and
the properties of $Q$ and $P$ (as above), we obtain (\ref{eq_CC}).

From lemma \ref{lemma_C} and (\ref{eq_CC}), it follows that 
 \begin{align}
\begin{split}
\frac{1}{2k} \p \nabla \dot{f} \p_0^2 + \p D_\tau D_\nu f \p_{0,\partial}^2 & \leq 
 C \Big( \p D_\tau f \p_{0,\partial}^2 + \p Df \p_1^2 \\
& + \p Df \p_{\frac{3}{2} + \ve,\partial} ( 1 + \p Df \p_{\frac{3}{2} + \ve,\partial} ) \p D_\tau D f \p_{0,\partial}^2 \Big) \\
& + \frac{C}{k}\int_0^t \int_\Om \langle \nabla \dot{f}, \, D^2 f L_1^{-1} P \nabla p \circ \widetilde{\eta} \rangle 
- \frac{C}{k} \int_0^t \int_\Om \langle \nabla \dot{f}, D^2_{vv} \nabla f \rangle  \\
& + \frac{C}{k} \int_0^t \int_\Om \langle \nabla \dot{f}, \, D^2 f L_1^{-1} P D_v \nabla \dot{f} \rangle 
+ \frac{C}{k} \int_0^t \int_\Om \langle \nabla \dot{f}, \, D^2 f L_1^{-1} P D^2_{vv} \nabla f  \rangle .
\end{split}
\nonumber
 \end{align}
Now the lemma follows from the Cauchy-Schwarz inequality, (\ref{bilinear}) and the fact that $Q$, and hence $P$ and $L_1^{-1}$,
are bounded operators in $L^2$ (see lemma \ref{L_inv_P_bounded}).
\end{proof}

\begin{lemma}
 \begin{gather}
 \n D f \n_\frac{3}{2} \,\, \leq  C \left( \n D f \n_{2 + \ve} \n D f \n_\frac{3}{2} 
+ \n D_\nu f \n_{1, \partial} \right).
\nonumber
\end{gather}
Here, $\ve >0$ is as indicated in remark \ref{rema_epsilon}.
\label{lemma_E}
\end{lemma}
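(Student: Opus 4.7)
The plan is to use an elliptic estimate for the Neumann problem together with the fact that $f$ satisfies the nonlinear Dirichlet equation (\ref{nl_Dirichlet_f_2d}), rewritten as $\Delta f = -\det(D^2 f)$.

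First I would recall the standard elliptic estimate for the Neumann problem on the smooth domain $\Om$: since the Neumann kernel consists of constants, for any $s \geq 0$ one has
\begin{gather}
\n Df \n_{s+1} \leq C\bigl(\n \Delta f \n_s + \n D_\nu f \n_{s+\frac{1}{2},\partial}\bigr),\nonumber
\end{gather}
because $Df$ kills the constant ambiguity. Applying this with $s=\tfrac{1}{2}$ gives
\begin{gather}
\n Df \n_{\frac{3}{2}} \leq C\bigl(\n \Delta f \n_{\frac{1}{2}} + \n D_\nu f \n_{1,\partial}\bigr).\nonumber
\end{gather}

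Next I would replace $\Delta f$ using (\ref{nl_Dirichlet_f_2d}), obtaining $\Delta f = -\det(D^2 f) = f_{xy}^2 - f_{xx}f_{yy}$, a quadratic expression in the entries of $D^2 f$. I would then estimate each such product in $H^{1/2}$ via the bilinear inequality (\ref{bilinear}): since we are in dimension $n=2$, any $s>1$ (in particular $s = 1+\ve$) satisfies $s > n/2$, and $r = \tfrac{1}{2}$ satisfies $s \geq r \geq 0$, so
\begin{gather}
\n \det(D^2 f)\n_{\frac{1}{2}} \leq C\,\n D^2 f \n_{\frac{1}{2}}\,\n D^2 f \n_{1+\ve} \leq C\,\n Df \n_{\frac{3}{2}}\,\n Df \n_{2+\ve}.\nonumber
\end{gather}
Combining the two displayed inequalities immediately yields the claim.

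The only mildly delicate point is invoking the Neumann estimate in the stated form: one must remember that the standard estimate controls only $f$ modulo constants, but since the left-hand side only involves $Df$, the kernel of constants is eliminated for free; no additional zero-mean normalization is required. The rest of the argument is a mechanical application of (\ref{bilinear}) to the quadratic nonlinearity $\det(D^2 f)$, which is exactly the kind of estimate the bootstrap assumption \ref{bootstrap_assump} is designed to handle.
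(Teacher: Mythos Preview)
Your proof is correct and follows essentially the same route as the paper: both apply the Neumann elliptic estimate with $s=\tfrac12$ and then replace $\Delta f$ via (\ref{nl_Dirichlet_f_2d}) to bound the quadratic term by $\n Df\n_{2+\ve}\n Df\n_{3/2}$. The only cosmetic differences are that the paper normalizes $\int_{\partial\Om} f = 0$ before invoking the estimate (whereas you observe directly that $Df$ annihilates the constant kernel), and the paper routes the product estimate through the Sobolev embedding $H^{1+\ve}\hookrightarrow C^0$ rather than quoting (\ref{bilinear}) outright.
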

\begin{proof}
Since our goal is to estimate $\nabla f$, it can be assumed that $\int_{\partial \Om} f = 0$. 
Then one has the following elliptic estimate
\begin{gather}
 \n f \n_s \leq C \Big ( \n \Delta f \n_{s-2} + \n D_\nu f \n_{s - \frac{3}{2}, \partial} \Big ).
\label{elliptic_modulo}
\end{gather}
Use (\ref{nl_Dirichlet_f_2d}) and 
the Sobolev embedding theorem to obtain
\begin{align}
 \begin{split}
  \n \Delta f \n_\frac{1}{2} & \leq C \n (D^2 f)^2 \n_\frac{1}{2} 
\, \leq  C \n D^2 f \n_{C^0(\Om)}  \n D^2 f \n_\frac{1}{2}  
\, \leq C \n D^2 f \n_{1 + \ve} \n D^2 f \n_\frac{1}{2}  \\
& \leq C \n D f \n_{2 + \ve} \n D f \n_\frac{3}{2}  ,
 \end{split}
\nonumber
\end{align}
so that 
\begin{gather}
 \n f \n_\frac{5}{2} \leq C \left( \n D f \n_{2 + \ve} \n D f \n_\frac{3}{2} 
+ \n D_\nu f \n_{1, \partial} \right),
\nonumber
\end{gather}
from which the result follows.
\end{proof}

\begin{prop}
 Let $\cA$ be the  mean curvature of the boundary of the domain $\eta(\Om)$. Then
if $\nabla f$ is sufficiently small,
\begin{align}
\begin{split}
 \p \cA \circ \widetilde{\eta} - 1 \p_{s +\frac{1}{2},\partial} & \leq C \left(
\p D_\tau Df \p_{s^\prime+\frac{1}{2},\partial} + 
\p D_\tau^2 Df \p_{s^\prime+\frac{1}{2},\partial} \right). 
\end{split}
\nonumber
\end{align}
where $s^\prime = \max(s, \ve)$, and $\ve$ is a fixed small number 
as indicated in remark \ref{rema_epsilon}, and $0\leq s \leq 3$.
\label{prop_estimate_mean_curvature}
\end{prop}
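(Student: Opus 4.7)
The plan is to write $\cA \circ \widetilde{\eta}$ explicitly as a rational function of the tangential derivatives of $\widetilde{\eta}$ on the circle $\partial\Om = S^1$, subtract the constant value $1$ (the mean curvature of $S^1$ itself), and then estimate the resulting expression using the bilinear inequality (\ref{bilinear}) together with standard Moser/composition estimates on $\partial \Om$. Parametrizing $\partial\Om$ by arc length, with tangent $\tau$ and outer normal $\nu$ satisfying $D_\tau\tau = -\nu$, the identity $\widetilde{\eta} = \id + \nabla f$ gives $D_\tau \widetilde{\eta} = \tau + D_\tau Df$ and $D_\tau^2 \widetilde{\eta} = -\nu + D_\tau^2 Df$. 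Using the planar signed-curvature formula
\[
\cA \circ \widetilde{\eta} \;=\; \frac{\det\bigl(D_\tau \widetilde{\eta},\, D_\tau^2 \widetilde{\eta}\bigr)}{\bigl|D_\tau \widetilde{\eta}\bigr|^3}
\]
(normalized so that $\cA \equiv 1$ on $S^1$), a direct expansion produces
\[
\cA \circ \widetilde{\eta} - 1 \;=\; \frac{\det(\tau, D_\tau^2 Df) - \det(D_\tau Df, \nu) + \det(D_\tau Df, D_\tau^2 Df) + G(D_\tau Df)}{\bigl(1 + 2\langle \tau, D_\tau Df\rangle + |D_\tau Df|^2\bigr)^{3/2}},
\]
where $G(w) = 1 - (1 + 2\langle\tau, w\rangle + |w|^2)^{3/2}$ is smooth with $G(0) = 0$.

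With this representation in hand, I would estimate the numerator term by term in $H^{s+\frac{1}{2}}(\partial \Om)$. The two linear pieces $\det(\tau, D_\tau^2 Df)$ and $\det(D_\tau Df, \nu)$ have smooth coefficients and immediately yield bounds of the form $C\p D_\tau^2 Df \p_{s+\frac{1}{2},\partial}$ and $C \p D_\tau Df \p_{s+\frac{1}{2},\partial}$. The bilinear term $\det(D_\tau Df, D_\tau^2 Df)$ is controlled by (\ref{bilinear}) applied on $\partial \Om$: taking $s' = \max(s,\ve)$ places $D_\tau Df$ in the Sobolev algebra $H^{s'+\frac{1}{2}}(\partial \Om)$, giving
\[
\p \det(D_\tau Df, D_\tau^2 Df) \p_{s+\frac{1}{2},\partial} \;\le\; C \p D_\tau Df \p_{s'+\frac{1}{2},\partial}\, \p D_\tau^2 Df \p_{s+\frac{1}{2},\partial}.
\]
Finally, because $G$ is smooth and vanishes at $0$, a Moser composition estimate combined with the bootstrap smallness of $D_\tau Df$ in $C^0(\partial \Om)$ yields $\p G(D_\tau Df) \p_{s+\frac{1}{2},\partial} \le C \p D_\tau Df \p_{s'+\frac{1}{2},\partial}$.

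For the denominator, the quantity $1 + 2\langle\tau, D_\tau Df\rangle + |D_\tau Df|^2$ is uniformly close to $1$ in $C^0(\partial\Om)$ once $k$ is large enough (by assumption \ref{bootstrap_assump}), so its $-\tfrac{3}{2}$-power is again a smooth function of an $H^{s'+\frac{1}{2}}$ quantity with values bounded in that space by a constant depending only on $\Om$ and $s$. Consequently the denominator contributes at most a multiplicative factor in $H^{s+\frac{1}{2}}(\partial \Om)$, and combining all the preceding bounds gives the claimed inequality.

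The main obstacle is the quotient structure: to run the composition estimates on both $G(D_\tau Df)$ and the reciprocal of the denominator in $H^{s+\frac{1}{2}}(\partial \Om)$, one must sit inside a Sobolev algebra, and this is precisely what forces the shift $s \mapsto s' = \max(s,\ve)$. When $s = 0$ the target space $H^{1/2}(\partial \Om)$ fails to embed into $L^\infty$ on the $1$-manifold $\partial \Om$, so we invest the extra $\ve$ derivatives on the right in order to recover both the algebra property and the $C^0$ control of $D_\tau Df$ supplied by the bootstrap assumption.
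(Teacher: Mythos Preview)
Your argument is correct and follows the same underlying strategy as the paper: express $\cA\circ\widetilde{\eta}$ on $\partial\Om$ in terms of $D_\tau Df$ and $D_\tau^2 Df$, subtract the background value $1$, and estimate the remainder via the bilinear inequality (\ref{bilinear}) together with composition/Moser estimates, the shift $s\mapsto s'=\max(s,\ve)$ being exactly what is needed to place the factors in a Sobolev algebra on the one-dimensional boundary.

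The route you take is, however, noticeably more economical than the paper's. The paper works with the vector $\cA\circ\widetilde{\eta}\,N\circ\widetilde{\eta}$, then squares to get $|\cA\circ\widetilde{\eta}|^2$, and finally takes a square root, introducing a chain of auxiliary quantities $M_0,\dots,M_5$ with explicit Taylor remainders in integral form. You instead use the scalar signed-curvature formula $\det(D_\tau\widetilde{\eta},D_\tau^2\widetilde{\eta})/|D_\tau\widetilde{\eta}|^3$ from the start, which immediately isolates $\cA\circ\widetilde{\eta}-1$ as a single quotient whose numerator splits into two linear pieces, one bilinear piece, and the composition term $G(D_\tau Df)$. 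This avoids the square-and-root detour and the bookkeeping of the $M_i$. The paper's approach has the minor advantage that every remainder is written out explicitly (no black-box Moser estimate is invoked), but your version is shorter and the composition lemma you use is entirely standard.
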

\begin{proof}
Notice that the left hand side of the above inequality is well defined since $\eta(\Om)=\widetilde{\eta}(\Om)$.
The mean curvature at $\widetilde{\eta}(\Om)$ is given by
\begin{align}
 \begin{split}
 A\circ \widetilde{\eta} \, N \circ \widetilde{\eta} & = 
- \frac{1}{|D_\tau \widetilde{\eta}|}  D_\tau \left( \frac{1}{|D_\tau \widetilde{\eta}|} D_\tau \widetilde{\eta} \right) \\
  & = 
-\frac{1}{|D_\tau \widetilde{\eta}|^2}  D^2_\tau\widetilde{\eta} 
 +\frac{1}{|D_\tau \widetilde{\eta}|^4} \langle D_\tau^2 \widetilde{\eta},\, D_\tau \widetilde{\eta} \rangle D_\tau \widetilde{\eta},
 \end{split}
\nonumber
\end{align}
where $N$ is the normal to $\partial \widetilde{\eta}(\Om)$. Since $\widetilde{\eta} = \id + Df$, one has
\begin{gather}
 D_\tau \widetilde{\eta} = \tau + D_\tau Df,
\nonumber
\end{gather}
and 
\begin{gather}
 D_\tau^2 \widetilde{\eta} = -\nu + D_\tau^2 Df.
\nonumber
\end{gather}
Notice that by (\ref{estimate_D_tau_Df_0_1}), $|D_\tau \widetilde{\eta}|^{-1}$ is well 
defined for large $k$, and the same holds for the several expressions below derived from 
$|D_\tau \widetilde{\eta}|^{-1}$.
Then
\begin{gather}
 \langle D^2_\tau \widetilde{\eta},D_\tau \widetilde{\eta} \rangle = 
\langle -\nu,D_\tau Df \rangle + \langle \tau, D^2_\tau Df \rangle + \langle D^2_\tau Df, D_\tau Df \rangle,
\nonumber
\end{gather}
so that
\begin{align}
 \begin{split}
A\circ \widetilde{\eta} \, N \circ \widetilde{\eta} & = 
\frac{1}{|D_\tau \widetilde{\eta}|^2} \left( \nu - D_\tau^2 Df \right) \\
& + \frac{1}{|D_\tau \widetilde{\eta}|^4}  \Big ( 
 \langle - \nu,D_\tau Df \rangle + \langle \tau, D^2_\tau Df \rangle + \langle D^2_\tau Df, D_\tau Df \rangle \Big)
(\tau + D_\tau Df)
 \end{split}
\label{mean_curv_Df}
\end{align}
We compute
\begin{gather}
 |D_\tau \widetilde{\eta}|^2 = \langle \tau + D_\tau Df,   \tau + D_\tau Df\rangle
= 1 +  2 \langle  D_\tau Df, \tau \rangle + |D_\tau Df|^2 = 1 + M_0,
\nonumber
\end{gather}
where
\begin{gather}
  M_0 = 2 \langle  D_\tau Df, \tau \rangle + |D_\tau Df|^2 .
\label{M_0}
\end{gather}
But,
\begin{align}
\frac{1}{|D_\tau \widetilde{\eta}|^2} & = \frac{1}{1 + M_0} =
 1+ M_1,
\label{expansion_D_eta_tilde_2}
\end{align}
where
\begin{gather}
 M_1 = - M_0 + M_0^2 \int_0^1 \frac{1-t}{( 1 + t M_0 )^2} \, dt.
\label{M_1_int}
\end{gather}
Since 
$\frac{1}{|D_\tau \widetilde{\eta}|^4} = (1+M_1)^2$,
we have
\begin{align}
\begin{split}
 A\circ \widetilde{\eta} \, N \circ \widetilde{\eta} & = 
\nu + M_1 \nu - (1+M_1) D_\tau^2 Df \\
& + (1+M_1)^2 \Big ( \langle  -\nu, D_\tau Df \rangle + 
\langle \tau, D_\tau^2 Df \rangle + \langle D_\tau^2 Df, D_\tau Df \rangle \Big ) (\tau + D_\tau Df ) \\
& = \nu + M_1 \nu - (1+M_1) D_\tau^2 Df 
 + (1+M_1)^2 M_2 (\tau + D_\tau Df ), 
\end{split}
\nonumber
\end{align}
where 
\begin{gather}
M_2 =  \langle - \nu, D_\tau Df \rangle +
\langle \tau, D_\tau^2 Df \rangle + \langle D_\tau^2 Df, D_\tau Df \rangle .
\label{M_2} 
\end{gather}
From this it follows
\begin{gather}
 | \cA \circ \widetilde{\eta} |^2 = 1 + 2 \langle \nu, M_3 \rangle + M_3^2 =  1 + M_4,
\nonumber
\end{gather}
where 
\begin{gather}
 M_3  = M_1 \nu + (1+M_1) D_\tau^2 Df + (1+M_1)^2 M_2(\tau +  D_\tau Df ),
\label{M_3}
\end{gather}
and 
\begin{gather}
 M_4 = 2 \langle \nu, M_3 \rangle + M_3^2.
\label{M_4}
\end{gather}
We now have
\begin{gather}
 | \cA \circ \widetilde{\eta} | = \sqrt{ 1 + M_4 } 
 = 1 + M_5,
\nonumber
\end{gather}
where 
\begin{gather}
 M_5 = \frac{1}{2}M_4 -\frac{1}{4} M_4^2 \int_0^1 \frac{1-t}{(1+t M_4)^\frac{3}{2}} \, dt.
\label{M_5}
\end{gather}
Moreover, $ \cA \circ \widetilde{\eta}  > 0$ for large $k$, so we can drop the absolute value and write
\begin{gather}
 \cA \circ \widetilde{\eta} - 1 =  M_5.
\label{A_M_5}
\end{gather}
Our goal now is to use the above expression to estimate 
$\p \cA \circ \widetilde{\eta} - 1\p_{s+\frac{1}{2}, \partial}$. We shall use (\ref{bilinear})
to estimate the several products involved. If $s>0$, then the condition $s + \frac{1}{2} > \frac{n}{2} = \frac{1}{2}$ is satisfied;
if $s=0$ then we use $\p \cdot \p_{\frac{1}{2}, \partial} \leq C \p \cdot \p_{\frac{1}{2} + \ve, \partial}$
and apply (\ref{bilinear}) to the $\frac{1}{2} + \ve$ norm. Henceforth we write $s^\prime$, with $s^\prime$
defined as in the statement of the proposition, covering both situations. 

From (\ref{M_0}),
\begin{gather}
 \p M_0 \p_{s^\prime+\frac{1}{2},\partial} \leq 
C \left( 
\p D_\tau Df \p_{s^\prime+\frac{1}{2},\partial} + 
\p D_\tau Df \p_{s^\prime+\frac{1}{2},\partial}^2 \right) \leq \, C 
\p D_\tau Df \p_{s^\prime+\frac{1}{2},\partial},
\label{estimate_M_0_Df}
\end{gather}
where the last inequality holds if  $\nabla f$ is sufficiently small. Next, from
(\ref{M_1_int}),
\begin{align}
\begin{split}
\p M_1 \p_{s^\prime+\frac{1}{2},\partial} &  \leq C 
\p M_0 \p_{s^\prime+\frac{1}{2},\partial} + \p M_0 \p^2_{s^\prime+\frac{1}{2},\partial}
\Big \Vert \int_0^1 \frac{1-t}{( 1 + t M_0 )^2} \, dt  \Big \Vert_{s^\prime+\frac{1}{2},\partial} \\
& \leq C \p M_0 \p_{s^\prime+\frac{1}{2},\partial} \Big  ( 1 + 
\Big \Vert \int_0^1 \frac{1-t}{( 1 + t M_0 )^2} \, dt  \Big \Vert_{s^\prime+\frac{1}{2},\partial} \Big ),
\end{split}
\label{int_remainder_1}
\end{align}
where the last inequality holds if  $\nabla f$ is sufficiently small because of (\ref{estimate_M_0_Df}).
The Sobolev norm of the integrand can be estimated in terms of the Sobolev norm of
$M_0$ and the Sobolev norm of the real valued function $h_t(x) = \frac{1-t}{(1+tx)^2}$, $0 \leq t \leq 1$,
as follows. For $\de >0$
very small it holds that 
\begin{gather}
 \p h_t \p_{s,[-\de,\de]} \leq C,
\end{gather}
where $ \p h_t \p_{s,[-\de,\de]}$ is the Sobolev norm of $h_t$ over the interval $[-\de,\de]$ and the 
constant $C$ does not depend on $t$. But by (\ref{estimate_M_0_Df}), we have that
$h_t \circ M_0: \partial \Om \rar \RR$ satisfies $h_t \circ M_0 (x) \in (-\de,\de)$ for all $x \in \partial \Om$
(provided that $k$ is large enough),
hence (\ref{int_remainder_1}) gives
\begin{align}
\begin{split}
\p M_1 \p_{s^\prime+\frac{1}{2},\partial} & 
\leq C \p M_0 \p_{s^\prime+\frac{1}{2},\partial} \Big  ( 1 + 
C   \p M_0 \p_{s^\prime+\frac{1}{2},\partial} \Big ) \leq C  \p M_0 \p_{s^\prime+\frac{1}{2},\partial} \\
& \leq \, \p D_\tau Df \p_{s^\prime+\frac{1}{2},\partial},
\end{split}
\label{M_1}
\end{align}
where (\ref{estimate_M_0_Df}) has been employed. To estimate $M_2$, use (\ref{M_2}) to get
\begin{gather}
\p M_2 \p_{s^\prime+\frac{1}{2},\partial} \leq \, C \left(
\p D_\tau Df \p_{s^\prime+\frac{1}{2},\partial} + 
\p D_\tau^2 Df \p_{s^\prime+\frac{1}{2},\partial} \right) ,
\label{estimate_M_2_Df}
\end{gather}
where we ignored higher powers of the terms involved since we can assume that $k$ is 
sufficiently large and hence $\nabla f$ sufficiently small.

From (\ref{M_3}),
\begin{align}
\begin{split}
 \p M_3 \p_{s^\prime+\frac{1}{2},\partial} & \leq  \,
C \p M_1 \p_{s^\prime+\frac{1}{2},\partial}
+ C \Big (1+ \p M_1 \p_{s^\prime+\frac{1}{2},\partial} \Big ) \p D_\tau^2 Df \p_{s^\prime+\frac{1}{2},\partial} \\
& + C \Big (1+ \p M_1 \p_{s^\prime+\frac{1}{2},\partial} \Big )^2 \p M_2 \p_{s^\prime+\frac{1}{2},\partial}
 \Big (1+ \p D_\tau Df \p_{s^\prime+\frac{1}{2},\partial} \Big ) \\
& \leq C \left(
\p D_\tau Df \p_{s^\prime+\frac{1}{2},\partial} + 
\p D_\tau^2 Df \p_{s^\prime+\frac{1}{2},\partial} \right).
\end{split}
\label{estimate_M_3_Df_M_1_M_2}
\end{align}
where we have used (\ref{estimate_M_2_Df}) and (\ref{M_1}) and again dropped
higher powers of the terms involved.
From (\ref{M_4}) and (\ref{estimate_M_3_Df_M_1_M_2}) it then follows that
\begin{align}
\begin{split}
 \p M_4 \p_{s^\prime+\frac{1}{2},\partial} \leq C \left(
\p D_\tau Df \p_{s^\prime+\frac{1}{2},\partial} + 
\p D_\tau^2 Df \p_{s^\prime+\frac{1}{2},\partial} \right).
\end{split}
\label{estimate_M_4_M_3}
\end{align}
Finally $M_5$ is estimated from 
(\ref{M_5}) with the help of (\ref{estimate_M_4_M_3}); the integral term in (\ref{M_5}) 
is estimated by an argument similar to that used to obtain (\ref{M_1}); it yields
\begin{align}
\begin{split}
 \p M_5 \p_{s^\prime+\frac{1}{2},\partial} \leq C \left(
\p D_\tau Df \p_{s^\prime+\frac{1}{2},\partial} + 
\p D_\tau^2 Df \p_{s^\prime+\frac{1}{2},\partial} \right).
\end{split}
\label{estimate_M_5}
\end{align}
Combining (\ref{A_M_5}) and (\ref{estimate_M_5}) the result follows.
\end{proof}

In the estimates below, we shall employ some elliptic estimates derived from
proposition 
\ref{perturbation_elliptic}. These will involve a constant $C>0$
which a priori depends on $\eta$, $\widetilde{\eta}$ or $T_k$ (the
time interval where solutions are defined). The fact that these constants 
remain bounded in some uniform time interval is a consequence of the uniform estimates 
of sections \ref{section_bootstrap} and \ref{final_proof}.

\begin{prop}
 Let $\cA_H$ be the harmonic extension of the mean curvature to the domain $\eta(\Om)$. Then
if $\nabla f$ is sufficiently small,
\begin{gather}
 \p \nabla \cA_H \circ \widetilde{\eta} \p_s \leq C \p (D\widetilde{\eta})^{-1} \p_r
\left( \p D_\tau D f \p_{s^\prime + \frac{1}{2},\partial } + \p D^2_\tau Df \p_{s^\prime + \frac{1}{2}, \partial} \right),
\label{estimate_nabla_harmonic_extension}
\end{gather}
where $r = \max( s, 1+ \ve)$, $s^\prime = \max(s, \ve)$, and $\ve$ is a fixed small 
number  as indicated in remark \ref{rema_epsilon}, and $s =0, 1, 2, 3$.
\label{prop_harmonic_estimate}
\end{prop}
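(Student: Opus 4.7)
The plan is to transport the harmonic extension problem to the fixed domain $\Om$ via $\widetilde{\eta}$, solve it there using the perturbed elliptic operator $\Delta_{\widetilde{\eta}}$ from Proposition~\ref{perturbation_elliptic}, apply Proposition~\ref{prop_estimate_mean_curvature} to control the boundary data, and finally convert $\nabla$ in the moving domain back to $\nabla$ on $\Om$ via the chain rule, using the bilinear inequality (\ref{bilinear}) to absorb the $(D\widetilde{\eta})^{-1}$ factor.

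Concretely, set $G = (\cA_H - 1)\circ \widetilde{\eta} : \Om \rar \RR$. Since $\cA_H$ is harmonic in $\widetilde{\eta}(\Om)$ with boundary value $\cA$, and constants are harmonic, $G$ solves the transported Dirichlet problem
\begin{gather}
\Delta_{\widetilde{\eta}} G = 0 \text{ in } \Om, \qquad G = \cA\circ\widetilde{\eta} - 1 \text{ on } \partial\Om. \nonumber
\end{gather}
By Proposition~\ref{perturbation_elliptic}, together with the fact that the coefficients of $\Delta_{\widetilde{\eta}}$ are smooth functions of $D\widetilde{\eta} = \id + D^2 f$ whose regularity is ensured by the bootstrap assumption, standard elliptic theory for variable-coefficient operators on the smooth domain $\Om$ yields
\begin{gather}
\p G \p_{s+1} \leq C \p \cA\circ\widetilde{\eta} - 1 \p_{s+\frac{1}{2},\partial}. \nonumber
\end{gather}
Plugging Proposition~\ref{prop_estimate_mean_curvature} into the right-hand side gives
\begin{gather}
\p \nabla G \p_s \leq \p G \p_{s+1} \leq C \left( \p D_\tau Df \p_{s' + \frac{1}{2},\partial} + \p D_\tau^2 Df \p_{s' + \frac{1}{2},\partial} \right). \nonumber
\end{gather}

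To conclude, differentiating $G = (\cA_H - 1)\circ \widetilde{\eta}$ gives $\nabla G = (D\widetilde{\eta})^T (\nabla \cA_H)\circ\widetilde{\eta}$, so that
\begin{gather}
\nabla \cA_H \circ \widetilde{\eta} = (D\widetilde{\eta})^{-T} \nabla G. \nonumber
\end{gather}
I then apply the bilinear estimate (\ref{bilinear}) with auxiliary index $\max(s, 1+\ve) > n/2 = 1$ to obtain
\begin{gather}
\p \nabla \cA_H \circ \widetilde{\eta} \p_s \leq C \p (D\widetilde{\eta})^{-T} \p_r \p \nabla G \p_s, \nonumber
\end{gather}
with $r = \max(s, 1+\ve)$, and combining with the previous bound gives the desired inequality.

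The main technical obstacle is the first elliptic estimate: one must verify that, in the range of $s$ of interest, the coefficients of $\Delta_{\widetilde{\eta}}$ are regular enough to produce an $H^{s+1}$ bound on $G$ without loss of derivatives. This relies on the bootstrap bound $\nabla f \in H^6(\Om)$ together with the smoothness of $\partial\Om$, which makes $D\widetilde{\eta}$ substantially more regular than the base regularity of $\eta$, precisely because of the smoother-boundary-value construction of $\ccE_\mu^s(\Om)$ in Section~\ref{space_smoother_emb}. The remaining steps are mechanical once this elliptic estimate is in hand.
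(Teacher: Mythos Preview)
Your proof is correct and follows essentially the same route as the paper: define the pulled-back function $G = (\cA_H - 1)\circ\widetilde{\eta}$, solve the transported Dirichlet problem via Proposition~\ref{perturbation_elliptic}, invoke Proposition~\ref{prop_estimate_mean_curvature} for the boundary data, and convert back using the chain rule together with the bilinear inequality~(\ref{bilinear}). The only cosmetic difference is that the paper writes $\nabla \cA_H \circ \widetilde{\eta} = \nabla G\,(D\widetilde{\eta})^{-1}$ (row-vector convention) while you write $(D\widetilde{\eta})^{-T}\nabla G$ (column-vector convention), which of course makes no difference for the norm estimate.
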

\begin{proof}
Notice that the left hand side of (\ref{estimate_nabla_harmonic_extension}) is well defined since 
$\eta(\Om) = \widetilde{\eta}(\Om)$. Consider the function $\widehat{\cA}_H: \Om \rar \RR$ defined
by $\widehat{\cA}_H = \cA_H \circ \widetilde{\eta} - 1$. It satisfies
\begin{gather}
 \begin{cases}
  \Delta_{\widetilde{\eta}} \widehat{\cA}_H = 0 & \text{ in } \Om, \\
\widehat{\cA}_H = \cA \circ \widetilde{\eta} - 1 & \text{ on } \partial \Om.
 \end{cases}
\nonumber
\end{gather}
Intuitively, $\widehat{\cA}_H$ is the harmonic extension of the mean curvature of the boundary of the moved domain minus the same function of the fixed domain, but parametrised by the fixed domain. Invoking proposition 
\ref{perturbation_elliptic} and standard elliptic estimates,
we have \begin{gather}
 \p \widehat{\cA}_H \p_{s+1} \leq C \p \cA \circ \widetilde{\eta} - 1 \p_{s+\frac{1}{2}, \partial}.
\label{elliptic_harm_ext_eta_tilde}
\end{gather}

Notice that proposition \ref{perturbation_elliptic} 
gives only the invertibility of $\Delta_{\widetilde{\eta}}$ in $H^s_0(\Om)$ (i.e.,
functions vanishing on the boundary). Estimate (\ref{elliptic_harm_ext_eta_tilde})
follows by combining $\Delta_{\widetilde{\eta}}^{-1}$ with the harmonic extension
of a function defined on $\partial \Omega$, what formally can be written 
as 
\begin{gather}
g = \Delta_{\widetilde{\eta}}^{-1} \Delta_{\widetilde{\eta}} g + 
\cH( \left. g \right|_{\partial \Om} ),
\label{ident_inv_bry}
\end{gather}
where $\cH$ denotes the harmonic extension.

Computing, 
\begin{gather}
\nabla \widehat{\cA}_H = \nabla \, ( \cA_H \circ \widetilde{\eta} - 1 ) = \nabla \cA_H \circ \widetilde{\eta} \, D\widetilde{\eta},
\nonumber
\end{gather}
hence,
\begin{gather}
 \nabla \cA_H \circ \widetilde{\eta} = \nabla \widehat{\cA}_H \, (D\widetilde{\eta})^{-1}.
\nonumber
\end{gather}
Using (\ref{bilinear}), (\ref{elliptic_harm_ext_eta_tilde}) implies
\begin{align}
\begin{split}
 \p \nabla \cA_H \circ \widetilde{\eta} \p_s & \leq C \p \nabla \widehat{\cA}_H \p_s \p (D\widetilde{\eta})^{-1} \p_r  \\
& \leq \, C \p \widehat{\cA}_H \p_{s+1} \p (D\widetilde{\eta})^{-1} \p_r  
\\
& \leq \, C \p \cA \circ \widetilde{\eta} - 1 \p_{s+\frac{1}{2}, \partial} \p (D\widetilde{\eta})^{-1} \p_r ,
\end{split}
\label{estimate_grad_A_comp_tilde}
\end{align}
where $r$ is as stated in the proposition. 
Combining (\ref{estimate_grad_A_comp_tilde}) with proposition \ref{prop_estimate_mean_curvature} gives the result.
\end{proof}

\begin{prop}
Let $v$ be defined as in (\ref{definition_v}), i.e., by $\dot{\beta} = v \circ \beta$. Then
\begin{gather}
 \p v \p_s \leq C,
\nonumber
\end{gather}
for small time and $s \leq 3$.
\label{bound_v}
\end{prop}
\begin{proof}
This is very much like the standard energy estimate for the Euler equations in 
the fixed domain $\Om$ \cite{MajBer, TayPDE}, except that the equation used
is (\ref{system_f_v_v_dot}), which contains the contributions
from $\nabla f$ and $\nabla \dot{f}$.
The estimate of $\p v \p_s$ then involves $\p \nabla \dot{f} \p_{s+1}$ and 
$\p \nabla f \p_{s+2}$.
These terms are bounded due the bootstrap assumption, provided that $s\leq 3$
(we could in fact take $s \leq 3 \half$, but $s \leq 3$ suffices for our purposes). 
\end{proof}

\begin{coro}
\begin{gather}
 \p \beta \p_s \leq C,
\nonumber
\end{gather}
for small time and $s \leq 3$.
\label{beta_bounded}
\end{coro}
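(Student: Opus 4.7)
The plan is to derive the bound for $\beta$ from the bound for $v$ by integrating the defining ODE $\dot\beta = v \circ \beta$ with initial condition $\beta(0) = \id$ in the Sobolev norm.

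First I would write, for each $t$ in the interval of existence,
\begin{gather}
\beta(t) = \id + \int_0^t \dot\beta(\tau)\, d\tau = \id + \int_0^t v(\tau) \circ \beta(\tau)\, d\tau,
\nonumber
\end{gather}
and take the $H^s$ norm of both sides. Since $\beta(\tau) \in \cD^s_\mu(\Om)$, proposition \ref{Sobolev_composition} applies and yields
\begin{gather}
\p v(\tau) \circ \beta(\tau) \p_s \leq C \p v(\tau) \p_s \big(1 + \p \beta(\tau) \p_s\big).
\nonumber
\end{gather}
Combined with the previous proposition \ref{bound_v}, which gives $\p v(\tau)\p_s \leq C$ for $s \leq 3$ on some small time interval, this produces
\begin{gather}
\p \beta(t) \p_s \leq \p \id \p_s + C \int_0^t \big(1 + \p \beta(\tau) \p_s \big)\, d\tau.
\nonumber
\end{gather}
Gronwall's inequality then yields $\p \beta(t)\p_s \leq (\p \id \p_s + Ct)e^{Ct}$, which is bounded by a constant for $t$ in a sufficiently small interval.

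The composition estimate in proposition \ref{Sobolev_composition} requires $s > \frac{n}{2} + 1 = 2$, so the argument directly covers $2 < s \leq 3$. For $0 \leq s \leq 2$, the bound follows for free by monotonicity of Sobolev norms, i.e.\ $\p \beta \p_s \leq C \p \beta \p_3 \leq C$. The only conceivable obstacle is ensuring that $\beta(\tau)$ remains in $\cD^s_\mu(\Om)$ (so that composition makes sense and the hypotheses of proposition \ref{Sobolev_composition} hold) throughout the time interval, but this is already part of the setup in section \ref{setting}, where solutions were written in the form $\eta = (\id + \nabla f)\circ \beta$ with $\beta \in \cD^s_\mu(\Om)$ on $[0, T_k)$. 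Thus no additional difficulty arises, and the corollary is a direct consequence of proposition \ref{bound_v} via the Sobolev composition estimate and Gronwall.
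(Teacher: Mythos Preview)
Your proof is correct and follows essentially the same approach as the paper: integrate $\dot\beta = v\circ\beta$ from the initial condition, apply the composition estimate together with the bound on $v$ from proposition \ref{bound_v}, and close with Gronwall (the paper phrases this last step as ``iterating the above inequality''). Your additional remark about handling $s\leq 2$ by monotonicity of Sobolev norms is a nice clarification that the paper leaves implicit.
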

\begin{proof}
 Since $\dot{\beta} = v \circ\beta$ and $\beta(0) = \id$, one has
\begin{gather}
 \beta = \id + \int_0^t v \circ \beta,
\nonumber
\end{gather}
and hence 
\begin{gather}
\p \beta \p_s \leq C + C \int_0^t \p v \p_s ( 1 + \p \beta \p_s ) \leq
C + C t + C \int_0^t \p \beta \p_s,
\nonumber
\end{gather}
where proposition \ref{bound_v}, proposition \ref{Sobolev_composition} and $J(\beta)=1$ have been employed. Iterating 
the above inequality yields the result.
\end{proof}

\begin{coro}
 \begin{align}
\p \eta \p_s \leq C, \nonumber \\
\p \dot{\eta} \p_s \leq C, \nonumber
 \end{align}
for small time and $s\leq 3$.
\label{eta_eta_dot_bounded}
\end{coro}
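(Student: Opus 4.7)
The plan is to feed the decomposition $\eta = (\id + \nabla f) \circ \beta$ (and the time-differentiated version \eqref{dot_eta_f_beta}) into the Sobolev composition estimate, and then read off uniform bounds from the bootstrap assumption together with the bounds on $v$ and $\beta$ already obtained.

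First I would estimate $\eta$. Writing $\eta = (\id + \nabla f) \circ \beta$ and applying Proposition \ref{Sobolev_composition} gives
\begin{gather}
\p \eta \p_s \;\leq\; C\,\p \id + \nabla f \p_s\bigl(1 + \p \beta \p_s\bigr).
\nonumber
\end{gather}
For $s \leq 3$, the bootstrap assumption \eqref{bootstrap_eq_1} bounds $\p \nabla f \p_s$ by $C/k^{(6-s)/2}$, which is uniformly bounded (in fact small) once $k$ is large, so $\p \id + \nabla f \p_s \leq C$. Corollary \ref{beta_bounded} gives $\p \beta \p_s \leq C$ for $s \leq 3$, so we conclude $\p \eta \p_s \leq C$.

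For $\dot\eta$, I would use \eqref{dot_eta_f_beta}, namely
\begin{gather}
\dot{\eta} = \bigl(\nabla \dot{f} + v \cdot D\,\nabla f + v\bigr) \circ \beta,
\nonumber
\end{gather}
and apply Proposition \ref{Sobolev_composition} once more to get
\begin{gather}
\p \dot{\eta} \p_s \;\leq\; C\,\p \nabla \dot{f} + v \cdot D\,\nabla f + v \p_s\bigl(1 + \p \beta \p_s\bigr).
\nonumber
\end{gather}
The term $\p v \cdot D\,\nabla f \p_s$ is handled by the bilinear inequality \eqref{bilinear}: for $s \leq 3$ (hence $s > \tfrac{n}{2} = 1$) we may bound it by $C\,\p v \p_s\,\p D\,\nabla f \p_s$. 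The three inner pieces are then controlled separately: $\p \nabla \dot f \p_s \leq C$ for $s \leq 3 < 4\half$ by \eqref{bootstrap_eq_2}; $\p v \p_s \leq C$ for $s \leq 3$ by Proposition \ref{bound_v}; and $\p D\,\nabla f \p_s \leq \p \nabla f \p_{s+1} \leq C$ for $s+1 \leq 4$ by \eqref{bootstrap_eq_1}. Combined with $\p \beta \p_s \leq C$ from Corollary \ref{beta_bounded}, this yields $\p \dot\eta \p_s \leq C$.

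There is no real obstacle here; the proof is a direct assembly of results already established in this section. The only mild care required is bookkeeping of the indices: the bound on $v$ from Proposition \ref{bound_v} is the tightest, restricting us to $s \leq 3$, while the bootstrap bounds on $\nabla f$ and $\nabla \dot f$ (at the level $s+1$ and $s$, respectively) are comfortably available in this range. This is exactly the reason the corollary is stated only for $s \leq 3$.
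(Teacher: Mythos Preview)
Your proof is correct and follows essentially the same approach as the paper for $\dot\eta$. For $\eta$ there is a minor difference: you bound $\eta$ directly from the decomposition $\eta = (\id + \nabla f)\circ\beta$, whereas the paper first bounds $\dot\eta$ and then obtains the bound on $\eta$ via $\eta = \id + \int_0^t \dot\eta$; both routes are equally short. One small quibble: the parenthetical ``for $s\leq 3$ (hence $s > \tfrac{n}{2} = 1$)'' is not a valid inference, and in fact Proposition~\ref{Sobolev_composition} requires $s > \tfrac{n}{2}+1 = 2$; the standard fix (which the paper makes explicit) is to prove the estimate at $s=3$, after which smaller $s$ follows trivially since $\p\cdot\p_s \leq \p\cdot\p_3$.
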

\begin{proof}
Use  (\ref{dot_eta_f_beta}),
proposition \ref{Sobolev_composition} and $J(\beta) = 1$ to find
\begin{align}
 \begin{split}
 \p \dot{\eta} \p_{s} & \leq \p \nabla \dot{f} \circ \beta \p_{s}
+ \p D_v \nabla f \circ \beta \p_{s} + \p v \circ \beta \p_{s}  \\
& \leq C\p \nabla \dot{f} \p_{s}\left( 1 + \p \beta \p_{s} \right) 
 + C \p v \p_{s} \p D^2 f\circ \beta \p_{s} + C \p v \p_{s}
\left( 1 + \p \beta \p_{s} \right) \\
&  \leq C\left( 1 + \p \beta \p_{s} \right) \left( \p v \p_{s} (1 + \p \nabla f \p_{s+1} ) 
+  \p \nabla \dot{f} \p_{s} \right) .
\end{split}
\nonumber
\end{align}
Proposition \ref{bound_v}, corollary \ref{beta_bounded} and the bootstrap assumption give
$ \p \dot{\eta} \p_s\leq C$. The bound on $\eta$ then follows from $\eta = \id + \int_0^t \dot{\eta}$.
\end{proof}

\begin{prop}
\begin{gather}
 \p \nabla p_0 \circ \eta \p_s \leq C,
\nonumber
\end{gather}
for small time and $s = 0, 1, 2, 3$.
\label{nabla_p_0_bounded}
\end{prop}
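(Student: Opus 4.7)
The plan is to pull the Dirichlet problem for $p_0$ back to the fixed domain, apply the elliptic estimate from Proposition \ref{perturbation_elliptic}, and then use the bounds on $v$, $\beta$, $\eta$ and $\dot{\eta}$ already obtained to control the resulting right hand side. Since $\widetilde{\eta}(\Om)=\eta(\Om)$, the pressure $p_0$ is a well defined function on $\widetilde{\eta}(\Om)$, so it is natural to introduce
\begin{gather}
\widehat{p}_0 = p_0 \circ \widetilde{\eta} : \Om \rar \RR,
\nonumber
\end{gather}
which vanishes on $\partial \Om$ because $p_0|_{\partial \eta(\Om)}=0$ and $\widetilde{\eta}(\partial \Om)=\partial \eta(\Om)$. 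Composing the Poisson equation for $p_0$ with $\widetilde{\eta}$, one obtains
\begin{gather}
\Delta_{\widetilde{\eta}} \widehat{p}_0 = - \bigl( \dive(\nabla_u u) \bigr) \circ \widetilde{\eta} \quad \text{in } \Om,
\qquad \widehat{p}_0 = 0 \quad \text{on } \partial \Om.
\nonumber
\end{gather}

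Next I would exploit the Euler-type identity $\dive(\nabla_u u) = \partial_i u^j\,\partial_j u^i$ (valid since $u$ is divergence free), which is first order in derivatives of $u$; this is the same gain that was used in Proposition \ref{bound_v}. Since $u = \dot{\eta}\circ \eta^{-1}$, the chain rule and (\ref{D_eta_inverse}) give
\begin{gather}
\bigl(\dive(\nabla_u u)\bigr) \circ \eta = (D\eta)^{-1,k}_{\,i}\, \partial_k \dot{\eta}^{\,j} \cdot (D\eta)^{-1,\ell}_{\,j}\, \partial_\ell \dot{\eta}^{\,i},
\nonumber
\end{gather}
and since $\eta = \widetilde{\eta}\circ \beta$ with $J(\beta)=1$, composing with $\beta^{-1}$ and using Proposition \ref{Sobolev_composition}, Corollary \ref{beta_bounded} and (\ref{beta_inverse_Sobolev}) allows us to pass from norms involving $\eta$ to norms involving $\widetilde{\eta}$. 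The upshot is a bound
\begin{gather}
\p \bigl(\dive(\nabla_u u)\bigr)\circ \widetilde{\eta} \p_{s-1} \leq C\, P\bigl( \p \dot{\eta}\p_{s}, \p (D\eta)^{-1}\p_{s-1}, \p \beta\p_{s}\bigr),
\nonumber
\end{gather}
for some polynomial $P$, the key point being that (\ref{bilinear}) applies because $s-1 \leq 2 < \tfrac{n}{2}$ is avoided by taking $s$ large enough in the product estimate (and, as in earlier proofs, by enlarging $s$ to an integer $\leq 3$ when necessary). By Corollaries \ref{beta_bounded} and \ref{eta_eta_dot_bounded} together with (\ref{D_eta_inverse}), each factor on the right is bounded for small time.

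Now Proposition \ref{perturbation_elliptic} applies to $\Delta_{\widetilde{\eta}}$ (which is essentially the same operator as $\Delta_\eta$ up to composition with $\beta$, since $\widetilde{\eta}=\id+\nabla f$ and $\nabla f$ is small by Assumption \ref{bootstrap_assump}), yielding
\begin{gather}
\p \widehat{p}_0 \p_{s+1} \leq C \p \bigl(\dive(\nabla_u u)\bigr)\circ \widetilde{\eta}\p_{s-1} \leq C.
\nonumber
\end{gather}
Finally, since $\nabla p_0 \circ \widetilde{\eta} = \nabla \widehat{p}_0 \cdot (D\widetilde{\eta})^{-1}$, inequality (\ref{D_eta_inverse}) and the smallness of $\nabla f$ give
\begin{gather}
\p \nabla p_0 \circ \widetilde{\eta}\p_s \leq C \p \widehat{p}_0\p_{s+1}\bigl(1 + \p \nabla f\p_{s+1}\bigr) \leq C,
\nonumber
\end{gather}
and then $\nabla p_0 \circ \eta = (\nabla p_0 \circ \widetilde{\eta})\circ \beta$, combined with Proposition \ref{Sobolev_composition} and Corollary \ref{beta_bounded}, transfers the bound to $\p \nabla p_0 \circ \eta\p_s$.

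The main obstacle I anticipate is bookkeeping: carefully placing the commutators between composition and differentiation at the correct Sobolev index so that each factor lies in a regularity class where (\ref{bilinear}) and Proposition \ref{Sobolev_composition} actually apply. In particular, because $\eta \in H^{4\sfrac{1}{2}}$ but $\beta$ is only controlled in $H^s$ with $s\leq 3$ through Corollary \ref{beta_bounded}, one must work at $s\leq 3$ throughout and resist trying to obtain a better estimate, which is precisely the restriction in the statement. The Euler-type cancellation $\dive(\nabla_u u) = \partial_i u^j \partial_j u^i$ is what allows the required regularity to fit within this budget.
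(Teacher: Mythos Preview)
Your proposal is correct and follows essentially the same strategy as the paper: pull the Dirichlet problem for $p_0$ back to $\Om$, apply the elliptic estimate of Proposition \ref{perturbation_elliptic}, exploit the cancellation $\dive(\nabla_u u)=\partial_i u^j\partial_j u^i$ to stay within the regularity budget, and close using Corollaries \ref{beta_bounded} and \ref{eta_eta_dot_bounded}. The only difference is that the paper pulls back directly via $\eta$ (working with $q_0=p_0\circ\eta$ and $\Delta_\eta$), whereas you pull back via $\widetilde{\eta}$ and then transfer to $\eta$ by composing with $\beta$ at the end; this is a harmless detour, and the paper's choice is marginally shorter since it avoids the final composition step.
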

\begin{proof}
Define $q_0 = p_0 \circ \eta$ and notice that $\nabla p_0 \circ \eta = \nabla q_0 (D\eta)^{-1}$. Therefore
\begin{gather}
 \p \nabla p_0 \circ \eta \p_s \leq C \p (D\eta)^{-1} \p_{s^\prime} \p \nabla q_0 \p_s,
\label{nabla_p_0_eta}
\end{gather}
where $s^\prime = \max\{ s, 1 + \ve\}$. We claim that both terms on the right hand side can be estimated 
from $\dot{\eta}$.  In light of 
(\ref{D_eta_inverse}), the first term is estimated in terms of $\p D\eta \p_{s^\prime}$, and hence,
in terms of $\p \eta \p_{s^\prime+1}$. This last term, in turn, is estimated in terms of $\dot{\eta}$ due to 
\begin{gather}
\eta = \id + \int_0^t \dot{\eta},
\label{eta_int_eta_dot} 
\end{gather}
so
\begin{gather}
\p (D\eta)^{-1} \p_{s^\prime} \leq C (1 + \int_0^t \p \dot{\eta} \p_{s^\prime+1}).
\label{estimate_D_eta_inv_p_0}
\end{gather}
For the second factor on the right hand side of (\ref{nabla_p_0_eta}), notice that $q_0$ satisfies
\begin{gather}
 \begin{cases}
  \Delta_{\eta} q_0 = - (\partial_i u^j \partial_j u_i)\circ \eta  & \text{ in } \Om, \\
q_0= 0  & \text{ on } \partial \Om,
 \end{cases}
\nonumber
\end{gather}
so that, using 
standard elliptic estimates, the Sobolev embedding theorem, 
interpolation,  Cauchy's inequality, 
proposition \ref{perturbation_elliptic}, and 
a construction similar to (\ref{ident_inv_bry}), 
we have
\begin{align}
\begin{split}
 \p \nabla q_0 \p_s \leq \p q_0 \p_{s+1} & \leq C \p (\partial_i u^j \partial_j u_i)\circ \eta  \p_{s-1} \leq 
C \p \nabla u\circ \eta \p_{s^{\prime \prime}}^2 
\nonumber
\end{split}
\end{align}
where we let $s^{\prime\prime} = \max\{ s-1, 1+\ve\}$ and used (\ref{bilinear}).
Since $\nabla u \circ \eta = \nabla(u\circ\eta) (D\eta)^{-1}$, this implies
\begin{align}
\begin{split}
 \p \nabla q_0 \p_s & \leq C  \p \nabla(u\circ\eta) (D\eta)^{-1} \p_{s^{\prime \prime}}^2 
 \leq C \p \nabla(u\circ\eta) \p_{s^{\prime \prime}}^2 
\p (D\eta)^{-1}  \p_{s^{\prime \prime}}^2 \\
& \leq C \p u\circ\eta \p_{s^{\prime \prime}+1}^2 
(1 + \int_0^t \p \dot{\eta}  \p_{s^{\prime \prime}+1} )^2 \\
& \leq C \p \dot{\eta} \p_{s^{\prime \prime}+1}^2 
(1 + \int_0^t \p \dot{\eta}  \p_{s^{\prime \prime}+1} )^2
\end{split}
\label{estimate_nabla_q_0_eta_dot}
\end{align}
after invoking (\ref{bilinear}), (\ref{D_eta_inverse}) and (\ref{eta_int_eta_dot}), and 
recalling that $u\circ \eta = \dot{\eta}$. Using (\ref{estimate_D_eta_inv_p_0}) and (\ref{estimate_nabla_q_0_eta_dot})
with (\ref{nabla_p_0_eta}) yields
\begin{align}
\begin{split}
\p \nabla p_0 \circ \eta \p_s  
& \leq C \p \dot{\eta} \p_{s^{\prime \prime}+1}^2 
(1 + \int_0^t \p \dot{\eta} \p_{s^\prime+1})
(1 + \int_0^t \p \dot{\eta}  \p_{s^{\prime \prime}+1} )^2, 
\end{split}
\end{align}
so it is enough to estimate $\dot{\eta}$. 

Letting $r = \max\{ s^{\prime \prime}+1, s^{\prime}+1, 2 + \ve \}$, we see that 
$r \leq 3$ and hence the result follows from corollary \ref{eta_eta_dot_bounded}.
\end{proof}

\begin{prop}
 \begin{gather}
\p \nabla p \circ \widetilde{\eta} \p_0 \leq C\left( 1 + k \p \nabla f \p_{2 +\ve} + k \p \nabla f \p_{3 +\ve} \right) .
\nonumber
 \end{gather}
\label{nabla_p_bound}
 Here, $\ve>0$ is as indicated in remark \ref{rema_epsilon}.
\end{prop}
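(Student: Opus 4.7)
The plan is to split the pressure as $p=p_0+k\cA_H$ and estimate the two contributions separately, using the previously established interior bound for $\nabla p_0$ and the harmonic-extension estimate for $\cA_H$.

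First I would write
\begin{gather}
\nabla p\circ\widetilde{\eta} = \nabla p_0\circ\widetilde{\eta} + k\,\nabla\cA_H\circ\widetilde{\eta},
\nonumber
\end{gather}
so that the $L^2$ norm splits into the two pieces $\p\nabla p_0\circ\widetilde{\eta}\p_0$ and $k\p\nabla\cA_H\circ\widetilde{\eta}\p_0$. For the first piece, since $\widetilde{\eta}=\eta\circ\beta^{-1}$, I would observe
\begin{gather}
\nabla p_0\circ\widetilde{\eta} = (\nabla p_0\circ\eta)\circ\beta^{-1},
\nonumber
\end{gather}
and then apply Proposition \ref{Sobolev_composition} with $\beta^{-1}\in\cD_\mu^s(\Om)$, combined with the bound $\p\beta^{-1}\p_s\leq C\p\beta\p_s$ from (\ref{beta_inverse_Sobolev}) and corollary \ref{beta_bounded}. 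This reduces the estimate to $\p\nabla p_0\circ\eta\p_s$ for some small $s$, which is controlled by Proposition \ref{nabla_p_0_bounded}. The output is a constant $C$, contributing the ``$1$'' on the right hand side.

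For the second piece, I would invoke Proposition \ref{prop_harmonic_estimate} at $s=0$, for which $r=1+\ve$ and $s^\prime=\ve$:
\begin{gather}
\p\nabla\cA_H\circ\widetilde{\eta}\p_0 \leq C\p(D\widetilde{\eta})^{-1}\p_{1+\ve}\big(\p D_\tau Df\p_{\frac{1}{2}+\ve,\partial}+\p D_\tau^2 Df\p_{\frac{1}{2}+\ve,\partial}\big).
\nonumber
\end{gather}
Because $\widetilde{\eta}=\id+\nabla f$ with $\nabla f$ small under the bootstrap assumption, the $(1+\ve)$-norm of $(D\widetilde{\eta})^{-1}$ is $\leq C$ (using (\ref{D_eta_inverse}) applied to $\widetilde{\eta}$). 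The two tangential boundary terms are then traded for interior norms via the trace estimate (\ref{restriction}):
\begin{gather}
\p D_\tau Df\p_{\frac{1}{2}+\ve,\partial}\leq C\p D_\tau Df\p_{1+\ve}\leq C\p\nabla f\p_{2+\ve},\nonumber\\
\p D_\tau^2 Df\p_{\frac{1}{2}+\ve,\partial}\leq C\p D_\tau^2 Df\p_{1+\ve}\leq C\p\nabla f\p_{3+\ve}.\nonumber
\end{gather}
Multiplying by $k$ then produces the two remaining terms on the right hand side.

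The main potential obstacle is the composition argument for $\nabla p_0\circ\widetilde{\eta}$: one must make sure that the Sobolev exponent used in Proposition \ref{Sobolev_composition} is compatible with the available bound $\p\nabla p_0\circ\eta\p_s\leq C$ for $s\leq 3$ from Proposition \ref{nabla_p_0_bounded}, and with the bound $\p\beta\p_s\leq C$ for $s\leq 3$ from corollary \ref{beta_bounded}. Since only the $L^2$ norm is required on the left, choosing any $s>\tfrac{n}{2}+1=2$ within the allowed range suffices, so the issue is benign. Combining the two contributions then yields the stated inequality.
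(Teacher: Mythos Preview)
Your proposal is correct and follows essentially the same route as the paper: split $p=p_0+k\cA_H$, handle $\nabla p_0\circ\widetilde{\eta}$ by composing through $\beta^{-1}$ and invoking Proposition~\ref{nabla_p_0_bounded}, and handle $k\nabla\cA_H\circ\widetilde{\eta}$ via Proposition~\ref{prop_harmonic_estimate} at $s=0$ together with the trace inequality. The only cosmetic difference is that the paper bounds $\p(D\widetilde{\eta})^{-1}\p_{1+\ve}$ by passing through $\p\eta\p_{2+\ve}$ and Corollary~\ref{eta_eta_dot_bounded}, whereas you (more directly) use $\widetilde{\eta}=\id+\nabla f$ and the bootstrap smallness of $\nabla f$; both are fine.
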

\begin{proof}
 From $p = p_0 + k \cA_H$ one obtains
\begin{gather}
 \p \nabla p \circ \widetilde{\eta} \p_0 \leq \p \nabla p_0 \circ \widetilde{\eta} \p_0 + 
k \p \nabla  \cA_H \circ \widetilde{\eta} \p_0  .
\nonumber
\end{gather}
Propositions \ref{Sobolev_composition} and \ref{nabla_p_0_bounded}, corollary \ref{beta_bounded} 
and $J(\beta) =1 $ imply
\begin{align}
\begin{split}
 \p \nabla p \circ \widetilde{\eta} \p_0 \leq 
\p \nabla p \circ \widetilde{\eta} \p_3 = \p \nabla p \circ \eta \circ \beta^{-1} \p_3
\leq \p \nabla p \circ \eta \p_3(1 + \p \beta^{-1} \p_3 ) \leq C.
\end{split}
\nonumber
\end{align}
Proposition \ref{prop_harmonic_estimate} gives
\begin{align}
\begin{split}
 \p \nabla \cA_H \circ \widetilde{\eta} \p_0 & \leq C \p (D\widetilde{\eta})^{-1} \p_{1+\ve}
\left( \p D_\tau D f \p_{\ve + \frac{1}{2},\partial } + \p D^2_\tau Df \p_{\ve + \frac{1}{2}, \partial} \right) \\
& \leq C \p \eta \p_{2+\ve}
\left( \p \nabla f \p_{2 + \ve } + \p \nabla f \p_{3 + \ve} \right).
\end{split}
\nonumber
\end{align}
Invoking (\ref{beta_inverse_Sobolev}) and corollary \ref{eta_eta_dot_bounded} we obtain the result.
\end{proof}

Finally we combine all of the above to obtain the desired energy estimate:

\begin{prop}
 \begin{align}
 \begin{split}
    \frac{1}{k} \p \nabla \dot{f} \p_0^2 + \p \nabla f \p_\frac{3}{2}^2 \, \leq\, &  C
   \p \nabla f \p_1^2 + 
\frac{C}{k} \int_0^t \p \nabla \dot{f} \p_0\p \nabla f\p_2  \\
&
  + \frac{C}{k} \int_0^t \p \nabla \dot{f} \p_0 \p \nabla f \p_{2+\ve} \left(\p \nabla \dot{f} \p_1 + \p \nabla f \p_2 \right) 
 \\
 &
 + \frac{C}{k} \int_0^t \p \nabla \dot{f} \p_0 \p \nabla f \p_{2+\ve} 
\left(1 + k\p \nabla f \p_{2+\ve} + k\p \nabla f \p_{3+\ve} \right)
\end{split}
 \label{energy_estimate_final}
 \end{align}
Here, $\ve>0$ is as indicated in remark \ref{rema_epsilon}.
\label{energy_estimate_final_prop}
\end{prop}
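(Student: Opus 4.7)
The plan is to assemble proposition \ref{energy_estimate_final_prop} by combining the boundary-energy estimate of lemma \ref{lemma_D}, the elliptic bound of lemma \ref{lemma_E}, and the pressure bound of proposition \ref{nabla_p_bound}, with all loose ends tied up using proposition \ref{bound_v}, lemma \ref{L_inv_P_bounded}, and the bootstrap assumption \ref{bootstrap_assump}.

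The first move is to upgrade the boundary quantity $\p D_\tau D_\nu f \p_{0,\partial}^2$ on the left-hand side of lemma \ref{lemma_D} to the interior quantity $\p \nabla f \p_{\frac{3}{2}}^2$. Lemma \ref{lemma_E} yields $\p Df \p_{\frac{3}{2}} \leq C \p Df \p_{2+\ve} \p Df \p_{\frac{3}{2}} + C \p D_\nu f \p_{1,\partial}$; since the bootstrap (\ref{bootstrap_eq_1}) makes $\p Df \p_{2+\ve}$ arbitrarily small for large $k$, the first term is absorbed on the left, giving $\p \nabla f \p_{\frac{3}{2}} \leq C \p D_\nu f \p_{1,\partial}$. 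Decomposing $\p D_\nu f \p_{1,\partial}^2 \leq C \p D_\tau D_\nu f \p_{0,\partial}^2 + C \p D_\nu f \p_{0,\partial}^2$ and bounding the lower-order piece by $C \p \nabla f \p_1^2$ via the trace inequality (\ref{restriction}) produces $\p \nabla f \p_{\frac{3}{2}}^2 \leq C \p D_\tau D_\nu f \p_{0,\partial}^2 + C \p \nabla f \p_1^2$. Adding this to lemma \ref{lemma_D} recovers the target left-hand side, with an extra $C \p \nabla f \p_1^2$ on the right that fits into the statement.

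The next step is to clean up the right-hand side of lemma \ref{lemma_D}. The terms $\p D_\tau f \p_{0,\partial}^2 + \p Df \p_1^2$ are already absorbed into $C \p \nabla f \p_1^2$ by (\ref{restriction}). The quartic boundary term $C \p Df \p_{\frac{3}{2}+\ve,\partial}(1+\p Df \p_{\frac{3}{2}+\ve,\partial}) \p D_\tau D f \p_{0,\partial}^2$ has a prefactor which is small for $k$ large by the bootstrap; after bounding $\p D_\tau Df\p_{0,\partial}^2$ by a sum of $\p D_\tau D_\nu f\p_{0,\partial}^2$ and a further second-derivative boundary quantity (in turn controlled by $\p \nabla f \p_{\frac{3}{2}}^2$ through interpolation with the bootstrap), it can be absorbed into the left-hand side. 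For the three integral pieces the required estimates are routine consequences of the bilinear inequality (\ref{bilinear}) and of the auxiliary bounds just cited: $\p D^2_{vv} \nabla f \p_0 \leq C \p v \p_{2+\ve}^2 \p \nabla f \p_2 \leq C \p \nabla f \p_2$ and $\p D_v \nabla \dot f \p_0 \leq C \p v \p_{1+\ve} \p \nabla \dot f \p_1 \leq C \p \nabla \dot f \p_1$ by proposition \ref{bound_v}; $\p L_1^{-1} P \p$ is bounded by lemma \ref{L_inv_P_bounded}; and proposition \ref{nabla_p_bound} supplies the factor $1 + k \p \nabla f \p_{2+\ve} + k \p \nabla f \p_{3+\ve}$ for $\p \nabla p \circ \widetilde\eta \p_0$. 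Substituting these in converts the integral pieces of lemma \ref{lemma_D} precisely into the three integral terms displayed in proposition \ref{energy_estimate_final_prop}.

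The main delicate point is the quartic boundary term discussed above: one has to chase tangential second derivatives of $f$ back into an interior $H^{\frac{3}{2}}$ norm and check that the bootstrap rates from assumption \ref{bootstrap_assump} really leave room to reabsorb it into the left-hand side. Beyond that the proof is bookkeeping, stitching together the three prepared estimates with the $v$- and $p$-dependent quantities replaced by the allotted powers of $\p \nabla f \p_s$ and $\p \nabla \dot f \p_s$.
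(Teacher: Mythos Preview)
Your proposal is correct and follows essentially the same approach as the paper's proof: start from lemma \ref{lemma_D}, upgrade $\p D_\tau D_\nu f\p_{0,\partial}^2$ to $\p \nabla f\p_{\frac{3}{2}}^2$ via lemma \ref{lemma_E} together with $\p D_\nu f\p_{1,\partial}^2 \leq C\p D_\tau D_\nu f\p_{0,\partial}^2 + C\p D_\nu f\p_{0,\partial}^2$ and the trace inequality, and then control the integral terms with proposition \ref{bound_v}, lemma \ref{L_inv_P_bounded}, and proposition \ref{nabla_p_bound}. The only cosmetic difference is your treatment of the quartic boundary term: the paper simply uses the trace estimate $\p D_\tau Df\p_{0,\partial}^2 \leq C\p \nabla f\p_{\frac{3}{2}}^2$ directly (via (\ref{restriction})) and then absorbs $C\p \nabla f\p_{2+\ve}\p \nabla f\p_{\frac{3}{2}}^2$ into the left side, which is cleaner than splitting off the $D_\tau D_\nu f$ component and appealing to interpolation.
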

\begin{proof}
 From lemmas \ref{lemma_D} and \ref{L_inv_P_bounded}, propositions \ref{bound_v} and \ref{nabla_p_bound}, 
and (\ref{restriction}), it follows that 
 \begin{gather}
    \frac{1}{k} \p \nabla \dot{f} \p_0^2 + \p  D_\tau D_\nu f \p_{0,\partial}^2 \leq  
C \p \nabla f \p_{2+\ve} \p \nabla f\p_\frac{3}{2}^2 +
C \p \nabla f \p_1^2 + 
\frac{C}{k} \int_0^t \p \nabla \dot{f} \p_0\p \nabla f\p_2 \nonumber \\
  + \frac{C}{k} \int_0^t \p \nabla \dot{f} \p_0 \p \nabla f \p_{2+\ve} \left(\p \nabla \dot{f} \p_1 + \p \nabla f \p_2 \right) 
\nonumber \\
 + \frac{C}{k} \int_0^t \p \nabla \dot{f} \p_0 \p \nabla f \p_{2+\ve} 
\left(1 + k\p \nabla f \p_{2+\ve} + k\p \nabla f \p_{3+\ve} \right).
  \nonumber
 \end{gather}
Adding $\p  \nabla_\nu f \p_{0,\partial}^2 + \p \nabla f \p_{2+\ve}^2 \p \nabla f\p_\frac{3}{2}^2$ 
to both sides and 
invoking lemma \ref{lemma_E} yields
\begin{gather}
    \frac{1}{k} \p \nabla \dot{f} \p_0^2 + \p  \nabla f \p_\frac{3}{2}^2 \leq  
C \p  \nabla_\nu f \p_{0,\partial}^2 + C(1+ \p \nabla f \p_{2+\ve} ) \p \nabla f \p_{2+\ve} \p \nabla f\p_\frac{3}{2}^2 
\nonumber \\
+
C \p \nabla f \p_1^2 + 
\frac{C}{k} \int_0^t \p \nabla \dot{f} \p_0\p \nabla f\p_2 
  + \frac{C}{k} \int_0^t \p \nabla \dot{f} \p_0 \p \nabla f \p_{2+\ve} \left(\p \nabla \dot{f} \p_1 + \p \nabla f \p_2 \right) 
\nonumber \\
 + \frac{C}{k} \int_0^t \p \nabla \dot{f} \p_0 \p \nabla f \p_{2+\ve} 
\left(1 + k\p \nabla f \p_{2+\ve} + k\p \nabla f \p_{3+\ve} \right)
  \nonumber
 \end{gather}
The term $\p  \nabla_\nu f \p_{0,\partial}^2$ can be estimated by 
$\p  \nabla_\nu f \p_{\ve,\partial}^2$, which in turns is estimated by
$\p \nabla f \p_1^2$ because of (\ref{restriction}). Since 
$\p \nabla f \p_{2+\ve}$ is small due to the bootstrap assumption, the term
\begin{gather}
(1+ \p \nabla f \p_{2+\ve} ) \p \nabla f \p_{2+\ve} \p \nabla f\p_\frac{3}{2}^2 
\nonumber
\end{gather}
 can be absorbed 
into the term $\p \nabla f\p_\frac{3}{2}^2$ on the left hand side, finishing the proof.
\end{proof}

\section{Regularity of $\nabla f$ and elliptic estimates. \label{regularity_and_elliptic_estimates}}

In this section we shall prove propositions  and 
\ref{decomposition_more_regular} and \ref{perturbation_elliptic}. 

\begin{prop} Let $\eta$
and 
$p$ be as in theorem \ref{main_theorem}. Suppose further that 
assumption (\ref{bootstrap_eq_1}) holds for $s \leq 4\half$.
Then
 $\nabla f \in H^{6}(\Om)$.
\label{f_more_regular_v}
\end{prop}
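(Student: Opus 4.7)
The plan is to derive $\nabla f \in H^6(\Om)$ by combining elliptic regularity for the nonlinear Dirichlet problem $\Delta f + \det(D^2 f) = 0$, $f|_{\partial\Om} = h$, with the extra boundary regularity that the surface tension term imposes on $\partial\eta(\Om) = \partial\widetilde{\eta}(\Om)$.

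First, I would exploit the pressure boundary condition $p|_{\partial\eta(\Om)} = k\cA$ together with the Coutand--Shkoller regularity $p \in H^4(\eta(\Om))$. Taking the boundary trace yields $\cA \in H^{7/2}(\partial\eta(\Om))$. Since $\cA$ is, to leading order, a second tangential derivative of the position of the boundary curve, $\partial\widetilde{\eta}(\Om)$ must be $H^{11/2}$ regular as a subset of $\RR^2$; that is $1\sfrac{1}{2}$ orders smoother than what follows directly from $\eta \in H^{4\half}$ alone.

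Second, I would convert this geometric regularity into regularity of the boundary trace $h = f|_{\partial\Om}$. The formulas in proposition \ref{prop_estimate_mean_curvature} express $\cA\circ\widetilde{\eta} - 1$ in terms of $D_\tau Df$ and $D_\tau^2 Df$, and the linearization of this expression at $\nabla f = 0$ is an elliptic second-order operator on $\partial\Om = S^1$. One can therefore invert it modulo lower-order terms that are small by assumption \ref{bootstrap_assump}; together with the volume-preserving constraint $J(\id + \nabla f) = 1$, which pins down the parameterization, this yields $\nabla f|_{\partial\Om} \in H^{11/2}(\partial\Om)$ and hence $h \in H^{13/2}(\partial\Om)$.

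Third, I would apply standard linear elliptic regularity for the Dirichlet problem for $f$, combined with the bilinear estimate (\ref{bilinear}) and the Sobolev algebra property in $H^s(\Om)$ for $s > 1$, to obtain
\begin{gather}
\p f \p_7 \leq C \p \det(D^2 f) \p_5 + C \p h \p_{13/2,\partial} \leq C \p \nabla f \p_6^2 + C \p h \p_{13/2,\partial}.
\nonumber
\end{gather}
The quadratic term is absorbed on the left via the smallness of $\nabla f$ (obtained from assumption \ref{bootstrap_assump} together with an interpolation between the available $H^{4\half}$ bound and the $H^6$ bound being proven), yielding $\nabla f \in H^6(\Om)$.

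The main obstacle will be the inversion argument in the second step: the decomposition $\eta = (\id + \nabla f)\circ\beta$ does not a priori select any canonical parameterization of $\partial\widetilde\eta(\Om)$, so one must exploit simultaneously that $\nabla f$ is a gradient and that $\id + \nabla f$ is volume-preserving in order to single out $\nabla f|_{\partial\Om}$ within the class of admissible $H^{11/2}$ parameterizations of the curve. Concretely, upgrading the implicit-function-theorem construction of $\varphi$ in proposition \ref{embedding} from the $H^{s+1/2}$ regularity scale to the $H^{13/2}$ regularity scale for the boundary data, while keeping track of the smallness provided by the bootstrap, should accomplish this.
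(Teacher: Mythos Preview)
Your outline is correct and matches the paper's strategy in broad strokes: extract $\cA\circ\widetilde\eta\in H^{7/2}(\partial\Om)$ from the pressure regularity, then use that the mean curvature is a second tangential derivative of $\widetilde\eta=\id+\nabla f$ to gain $1\sfrac{1}{2}$ derivatives on the boundary data, and finally apply elliptic regularity for the nonlinear Dirichlet problem (\ref{nl_Dirichlet_f_2d_system}) to conclude $\nabla f\in H^6(\Om)$.

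The difference lies in your second step, and here the paper's approach is more direct and avoids the obstacle you flag at the end. Rather than trying to invert a second-order operator in the scalar $\cA\circ\widetilde\eta$, the paper works with the full \emph{vector} $\cA\circ\widetilde\eta\,N\circ\widetilde\eta$ given by the formula (\ref{mean_curv_Df}). Taking its $\tau$ and $\nu$ components produces two scalar equations on $\partial\Om$ whose highest-order terms are $D_\tau^3 f$ and $D_\tau^2 D_\nu f$, respectively; the coefficient matrix is a perturbation of the identity when $\nabla f$ is small, so one solves a $2\times 2$ \emph{algebraic} system pointwise on $\partial\Om$ to express $D_\tau^3 f$ and $D_\tau^2 D_\nu f$ in terms of $\cA\circ\widetilde\eta$ and lower-order expressions in $D^2 f$. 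No inversion of a differential operator, no reparameterization ambiguity, and no need to invoke the volume constraint here: the parameterization is already fixed because $\widetilde\eta:\partial\Om\to\partial\widetilde\eta(\Om)$ is a specific map and the computation is carried out in that parameterization. This then gives $D_\tau f,\,D_\nu f\in H^{5\half}(\partial\Om)$, hence $h\in H^{6\half}(\partial\Om)$, and the elliptic step closes.

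One technical point in your third step: the estimate $\|\det(D^2 f)\|_5\leq C\|\nabla f\|_6^2$ is too crude to absorb, since the smallness of $\|\nabla f\|_6$ is exactly what you are proving. Use the bilinear inequality (\ref{bilinear}) asymmetrically to get $\|\det(D^2 f)\|_5\leq C\|\nabla f\|_{2+\ve}\|\nabla f\|_6$; the factor $\|\nabla f\|_{2+\ve}$ is small by the bootstrap assumption for $s\leq 4\half$, and absorption is then legitimate.
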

\begin{proof}
The strategy will be to solve for the highest derivatives of $f$ in 
(\ref{mean_curv_Df}). Notice that $\beta$ is as regular as $\eta$.

Recalling (\ref{Df_components}) and (\ref{D_tau_Df_components}),
\begin{align}
 D^2_\tau Df = 
\left( D^3_\tau f + 2 D_\tau D_\nu f - D_\tau f \right) \tau
+ \left( D^2_\tau D_\nu f - 2 D_\tau^2 f - D_\nu f \right)\nu,
\nonumber
\end{align}
and so 
\begin{align}
\begin{split}
\langle D_\tau^2 Df, \tau + D_\tau Df \rangle & = 
\left( D_\tau^3 f +2 D_\tau D_\nu f - D_\tau f \right)
\left( 1 + D_\tau^2 f + D_\nu f \right) \\
& + \left( D_\tau^2 D_\nu f - 2 D_\tau^2 f - D_\nu f \right)
\left( D_\tau D_\nu f - D_\tau f \right).
\end{split}
\nonumber
\end{align}
Hence (\ref{mean_curv_Df}) gives
\begin{align}
\begin{split}
\langle \tau, \cA \circ \widetilde{\eta} N \circ \widetilde{\eta} \rangle & =  
-|D\widetilde{\eta}|^{-2}\left( D^3_\tau f + 2 D_\tau D_\nu f - D_\tau f \right)  \\
& - |D\widetilde{\eta}|^{-4}
 \Big(  \left( D_\tau^3 f +2 D_\tau D_\nu f - D_\tau f \right)
\left( 1 + D_\tau^2 f + D_\nu f \right) \\
& + \left( D_\tau^2 D_\nu f - 2 D_\tau^2 f - D_\nu f \right)
\left( D_\tau D_\nu f - D_\tau f \right) \\
& - \left( D_\tau D_\nu f - D_\tau f \right) \Big) \left( 1 + D_\tau^2 f + D_\nu f \right) 
.
\end{split}
\nonumber
\end{align}
We write this as
\begin{align}
\begin{split}
\langle \tau, \cA \circ \widetilde{\eta} N \circ \widetilde{\eta} \rangle & =  
-D_\tau^3 f \left( |D\widetilde{\eta}|^{-2} + |D\widetilde{\eta}|^{-4}\left( 1 + D_\tau^2 f + D_\nu f \right)^2 \right) 
 \\
& - D_\tau^2 D_\nu f   \left( 1 + D_\tau^2 f + D_\nu f \right) |D\widetilde{\eta}|^{-4} + \cR_1,
\end{split}
\label{dot_tau}
\end{align}
where $\cR_1$ is a polynomial
 expression
in at most two derivatives of $f$, and in $|D\widetilde{\eta}|^{-2}$. Analogously,
\begin{align}
\begin{split}
\langle \nu, \cA \circ \widetilde{\eta} N \circ \widetilde{\eta} \rangle & = 
-|D\widetilde{\eta}|^{-2} \left( D_\tau^2 D_\nu f - 2 D_\tau^2 f - D_\nu f \right) + |D\widetilde{\eta}|^{-2} \\
& - |D\widetilde{\eta}|^{-4}\Big ( 
\left( D_\tau^3 f +2 D_\tau D_\nu f - D_\tau f \right)
\left( 1 + D_\tau^2 f + D_\nu f \right) \\
& + \left( D_\tau^2 D_\nu f - 2 D_\tau^2 f - D_\nu f \right)
\left( D_\tau D_\nu f - D_\tau f \right) \\
& - \left( D_\tau D_\nu f - D_\tau f \right) \Big) \left( D_\tau D_\nu f - D_\tau f \right),
\end{split}
\nonumber
\end{align}
which can be written as 
\begin{align}
\begin{split}
\langle \nu, \cA \circ \widetilde{\eta} N \circ \widetilde{\eta} \rangle & = 
-D_\tau^2 D_\nu f \left( |D\widetilde{\eta}|^{-2} + |D\widetilde{\eta}|^{-4} \left( D_\tau D_\nu f - D_\tau f \right)^2 \right)\\
& - D_\tau^3  |D\widetilde{\eta}|^{-4} \left( 1 + D_\tau^2 f + D_\nu f \right) \left( D_\tau D_\nu f - D_\tau f \right) 
 |D\widetilde{\eta}|^{-4} + \cR_2,
\end{split}
\label{dot_nu}
\end{align}
where $\cR_2$ is a polynomial expression in at most two derivatives of $f$ and in $|D\widetilde{\eta}|^{-2}.$ 

To get a formula for $N\circ \widetilde{\eta}$ we note that the unit tangent vector to $\partial \widetilde{\eta}(\Om)$ is
$|D_{\tau} \widetilde{\eta} |^{-1} (\tau + D_{\tau} \nabla f).$  Thus if $\mathcal{J}$ denotes a clockwise right-angle rotation of the plane we find that $N\circ \widetilde{\eta} = |D_{\tau} \widetilde{\eta} |^{-1} \mathcal{J} (\tau + D_{\tau} \nabla f)$ which equals
$|D_{\tau} \widetilde{\eta} |^{-1} (\nu + \mathcal{ J} D_{\tau} \nabla f).$  Then 
letting $W= \mathcal{J} D_{\tau} \nabla f, $
we get 
\begin{gather}
 N \circ \widetilde{\eta} = |D_{\tau} \widetilde{\eta} |^{-1} (\nu + W).
 \label{normal}
\end{gather}
Use (\ref{normal}) to combine (\ref{dot_tau}) and 
(\ref{dot_nu}) as

\begin{subnumcases}{\label{algebraic_Df_3}}
 A D_\tau^3 f + r_1 D_\tau^2 D_\nu f = - \cA \circ \widetilde{\eta} \, \langle \tau,  W \rangle |D_\tau\widetilde{\eta}|^{-1} - \cR_1
\label{algebraic_Df_3_tau} \\
r_2  D_\tau^3 f + B D_\tau^2 D_\nu f = - \cA \circ \widetilde{\eta}|D_\tau\widetilde{\eta}|^{-1} 
 - \cA \circ \widetilde{\eta}|D_\tau\widetilde{\eta}|^{-1} \, \langle \nu,  W \rangle |D_\tau\widetilde{\eta}|^{-1}- \cR_2,
\label{algebraic_Df_3_nu}
\end{subnumcases}
where $r_1$ and $r_2$ are again
polynomials in at most two derivatives of $f$ and in $|D\widetilde{\eta}|^{-2}.$ 
 Also $A$ and $B$ are given by:
\begin{align}
A &= |D\widetilde{\eta}|^{-2} + |D\widetilde{\eta}|^{-4}\left( 1 + D_\tau^2 f + D_\nu f \right)^2,  \nonumber \\
B &= |D\widetilde{\eta}|^{-2} + |D\widetilde{\eta}|^{-4} \left( D_\tau D_\nu f - D_\tau f \right)^2. \nonumber
\end{align}
Notice that $W$ also contains at most two derivatives of $f$.

Given assumption \ref{bootstrap_assump} for $s \leq 4\half$, if $k$ is sufficiently large (and so $\nabla f$ sufficiently small),
use (\ref{expansion_D_eta_tilde_2}) to expand 
$|D\widetilde{\eta}|^{-2} = 1 + M_1$ and $|D\widetilde{\eta}|^{-4} = 1 + 2M_1 + M_1^2$,
as it was done in proposition 
\ref{prop_estimate_mean_curvature}.
It then follows that 
the lowest order terms in $r_1$ and $r_2$ are $O( \nabla f)$, and in particular $r_1$ and $r_2$ can
be made arbitrarily small by taking $k$ large. It also follows that $A$ and $B$ are nowhere zero,
so the algebraic system (\ref{algebraic_Df_3}) can be solved for $D_\tau^3 f $ and 
$D_\tau^2 D_\nu f$. We conclude that 

\begin{subnumcases}{\label{algebraic_sys_sol}}
D_\tau^3 f = \cP_1 (\cA \circ \widetilde{\eta}, \langle \tau, W\rangle ,
\langle \nu, W \rangle,\cR_3)
\label{polynomial_D_3_f_tau} \\
D_\tau^2 D_\nu f = \cP_2 (\cA \circ \widetilde{\eta}, \langle \tau, W\rangle ,
\langle \nu, W \rangle,\cR_4)
\label{polynomial_D_3_f_nu} 
\end{subnumcases}
where $\cP_1$ and $\cP_2$ are polynomials in their arguments and $\cR_3$ and $\cR_4$ are polynomial expressions
containing at most two derivatives of $f$ and in $|D_\tau \widetilde{\eta}|^{-1}$. The explicit dependence on $|D_\tau \widetilde{\eta}|^{-1}$ can be
eliminated by expanding $|D_\tau \widetilde{\eta}|^{-1} = 1 + M.$ Here $M$ is a smooth function of $D^2 f$ which is $O(D^2 f).$
Hence this term is absorbed into $\cR_3$ 
and $\cR_4.$

We can now determine the regularity of $f$. First notice that $\beta$ is as regular as $\eta$. 
By construction, we know that $\nabla f$ is at least in $H^{4\half}(\Om).$
From the hypotheses we have $\nabla p \circ \eta \in H^3(\Om)$
and $\nabla p_0 \circ \eta \in H^3(\Om)$, hence $\nabla \cA_H \circ \eta \in H^3(\Om)$,
and $\nabla \cA_H \circ \widetilde{\eta} = \nabla \cA_H \circ \eta \circ \beta^{-1} \in H^3(\Om)$,
where we recall that $p_0$ and $\cA_H$ have been defined in (\ref{free_boundary}). Since
\begin{gather}
 \nabla (\cA_H \circ \widetilde{\eta} ) = \nabla \cA_H \circ \widetilde{\eta}\,  D\widetilde{\eta}
\nonumber
\end{gather}
one sees that $\nabla (\cA_H \circ \widetilde{\eta} ) \in H^3(\Om)$, so 
$\cA_H \circ \widetilde{\eta} \in H^4(\Om)$,  and therefore
$\cA \circ \widetilde{\eta} \in H^{3\half}(\partial \Om)$.
We infer from (\ref{algebraic_sys_sol}) --- using also the fact that the right hand side of this expression
has at most two derivatives of $f$ --- that $D_\tau f \in H^{5\half}(\partial \Om)$
and $D_\nu f \in H^{5\half}(\partial \Om)$, which implies
$\nabla f \in H^6(\Om)$. \hfill $\qed$\\ 

\noindent \emph{Proof of proposition \ref{decomposition_more_regular}:}
We know already that $\dot{\eta}_k \in H^{4\half}(\Om)$ (see remark \ref{remark_regularity_eta_dot}), so that  $\nabla f_k$ 
and $\nabla \dot{f}_k$ are in $H^{4\half}(\Om)$. As they satisfy
$\nabla f_k(0) = 0 = \nabla \dot{f}_k(0)$, shrinking $T_k$ if necessary we can assume 
that assumption \ref{bootstrap_assump} holds for $s \leq 4\half$.
We can therefore apply proposition \ref{f_more_regular_v} and conclude that
in fact
$\nabla f_k \in H^6(\Om)$. Shrinking $T_k$ once more, we can now assume
that assumption \ref{bootstrap_assump} holds in full, finishing the proof.
\end{proof}

\noindent \emph{Proof of proposition \ref{perturbation_elliptic}:} Either $\xi = \id + \nabla f$ or
$\xi = \beta + \nabla f \circ \beta = \id + \int_0^t v\circ \beta + \nabla f \circ \beta$. $\p\nabla f\p_3$ will be 
small by the bootstrap assumption; for small time $\int_0^t \p v\circ \beta \p_3$ will be small due to 
proposition \ref{bound_v} and corollary \ref{beta_bounded}, whereas $\p \nabla f \circ \beta \p_3$ will be small in light of 
proposition \ref{Sobolev_composition}, the bootstrap assumption and corollary \ref{beta_bounded}.
Therefore $\xi$ is near the identity in $H^3$ and so $\Delta_{\xi}$ is a bounded elliptic operator
from $H^s_0(\Om)$ to $H^{s-2}(\Om).$ for $s \leq 3.$
\hfill $\qed$\\ 

\begin{rema} Notice that $\Delta_{\xi}$ in the previous
proof is bounded uniformly in $\xi$ as long as the bootstrap assumption holds. We shall show in the next section that equations (\ref{bootstrap_eq}) in fact hold in 
a uniform time interval, so the perturbation will be uniformly bounded in a small time interval independent of $k$.
\end{rema}

\section{The bootstrap estimate.\label{section_bootstrap}}
The goal of this section is to show 
that assumption \ref{bootstrap_assump} holds uniformly on a small time interval. 
As pointed out in remark \ref{remark_finite}, it is enough to our purposes to prove this
for finitely many values of $s$. 

\begin{prop}
Assume that (\ref{bootstrap_eq_1}) holds for $s=0,1,2,2+\ve,3+\ve \;{\rm and}\; 6$, and that (\ref{bootstrap_eq_2})
holds for $s=0,1 \;{\rm and}\; 4\half$. Then there exist a number $\La > 0$ and a constant $\widetilde{C}>0$,
independent of $k$, such 
that 
\begin{subequations}
\begin{align}
 & \p \nabla f_k \p_s \leq \frac{\widetilde{C}}{k^{\frac{6-s}{2} + \La} }, ~ s=0,1,2,2+\ve,3+\ve,6,
\label{bootstrap_improved_eq_1} \\
 & \p \nabla \dot{f}_k \p_s \leq \frac{\widetilde{C}}{k^{\frac{4.5-s}{2} + \La}},~ s=0,1, 4\half.
\label{bootstrap_improved_eq_2}
\end{align}
\label{bootstrap_improved_eq}
\end{subequations}
\label{prop_energy_bootstrap}
\end{prop}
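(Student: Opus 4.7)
The plan is to substitute the bootstrap assumption into the energy inequality of Proposition \ref{energy_estimate_final_prop}, verify that the resulting right-hand side decays strictly faster in $k$ than the bootstrap exponent at $s = \sfrac{3}{2}$, and then spread this improvement to every listed $s$ using convex Sobolev interpolation together with the elliptic and algebraic identities already available for $\nabla f$.

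Plugging the bootstrap bounds into the right-hand side of Proposition \ref{energy_estimate_final_prop}, the dominant term is $C\p\nabla f\p_1^2 \leq C/k^{5}$; a direct term-by-term computation shows that every time-integral term is of the form $Ct/k^{\alpha}$ with $\alpha > 5$ (the worst exponent, produced by $(C/k)\int_0^t \p\nabla \dot f\p_0\p\nabla f\p_2$, is $21/4$). Hence, for $t$ in a fixed small interval,
\begin{gather}
\p\nabla f\p_{3/2}^{2} + \frac{1}{k}\p\nabla \dot f\p_0^{2} \;\leq\; \frac{C(1+t)}{k^{5}},\nonumber
\end{gather}
which gives $\p\nabla f\p_{3/2} \leq C/k^{5/2}$, improving the bootstrap value $C/k^{9/4}$ by $\La = \sfrac{1}{4}$. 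Iterating the convex interpolation $\p\nabla f\p_1 \leq C\p\nabla f\p_0^{1/3}\p\nabla f\p_{3/2}^{2/3}$ against the bootstrap $\p\nabla f\p_0 \leq C/k^{3}$ and re-feeding the result into the energy inequality produces a strictly improved bound at $s=1$, the fixed point of the iteration being controlled by the dominant time-integral exponent in the energy estimate. Interpolating the improved $\p\nabla f\p_{3/2}$ against the bootstrap $\p\nabla f\p_{6} \leq C$ then yields improvements at $s = 2,\, 2+\ve,\, 3+\ve$.

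The endpoints $s=0$ and $s=6$ cannot be reached by interpolation alone, since the bootstrap exponents $(6-s)/2$ are exactly those predicted by interpolation between the two endpoint values, so for these I would invoke the equations satisfied by $f$. At $s=6$: the algebraic system in the proof of Proposition \ref{f_more_regular_v} solves for $D_\tau^3 f$ and $D_\tau^2 D_\nu f$ on $\partial\Om$ in terms of $\cA\circ\widetilde\eta-1$ and lower-order nonlinear expressions in $\nabla f$; combining the mean-curvature estimate of Proposition \ref{prop_estimate_mean_curvature} (evaluated with the improved intermediate $\nabla f$ norms) with standard elliptic regularity yields $\p\nabla f\p_{6} \leq \widetilde C/k^{\La}$. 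At $s=0$: the elliptic estimate $\p f\p_{1} \leq C\p h\p_{1/2,\partial}$ for (\ref{nl_Dirichlet_f_2d_system}) together with the trace inequality $\p h\p_{1/2,\partial} \leq C\p\nabla f\p_{3/2}$ transports the $s=\sfrac{3}{2}$ improvement back down to $s=0$. Finally, the $\p\nabla \dot f\p_s$ bounds at $s = 0,1,4\half$ follow by reading (\ref{equation_ddot_nabla_f}) (equivalently (\ref{system_f_v_full})) as an equation for $\nabla\ddot f$ and applying Proposition \ref{nabla_p_bound}, Proposition \ref{bound_v}, and the improved spatial estimates on $\nabla f$; at $s = 4\half$ one also uses the algebraic system of Proposition \ref{f_more_regular_v} to control the highest boundary derivatives.

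The principal obstacle is that the bootstrap exponents are self-consistent under interpolation, so interpolation alone can never close the argument. The loop closes only because the energy estimate is a genuinely new piece of information, sitting at the interior scale $s=\sfrac{3}{2}$, and because the two endpoint improvements can be extracted from equation-based identities (the boundary algebra for $D_\tau^3 f$ at $s=6$ and the Dirichlet estimate at $s=0$) rather than from interpolation. A subsidiary technical point is to verify that all the improvements can be made with a single uniform $\La > 0$ and a single $\widetilde C$ independent of $k$, which reduces to checking that the dominant time-integral exponent $21/4 - O(\ve)$ strictly exceeds $5$ for small $\ve$, and that the iteration does not degrade the constants across finitely many steps.
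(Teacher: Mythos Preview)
Your overall architecture --- feed the bootstrap into Proposition~\ref{energy_estimate_final_prop} to gain at $s=\sfrac{3}{2}$, interpolate against the $s=6$ endpoint, iterate, and then treat $s=6$ via the boundary algebra of Proposition~\ref{f_more_regular_v} --- is exactly the paper's strategy, and your identification of the dominant exponents is correct. Two points, however, deserve comment.

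First, the paper handles $s=0$ more simply than you do: once the iteration has pushed $\p\nabla f\p_1$ past $C/k^{3}$, it just uses $\p\nabla f\p_0\le\p\nabla f\p_1$. Your route via the Dirichlet estimate $\p f\p_1\le C\p h\p_{\sfrac12,\partial}$ and a ``trace'' bound $\p h\p_{\sfrac12,\partial}\le C\p\nabla f\p_{\sfrac32}$ is not wrong, but the second inequality is not a trace inequality --- it is simply $\p h\p_{\sfrac12,\partial}\le\p h\p_{2,\partial}\le C\p f\p_{\sfrac52}$ --- and in the end gives the same thing as the paper's one-line argument.

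Second, and this is a genuine gap: your proposal to obtain the $\p\nabla\dot f\p_s$ bounds at $s=0,1$ by ``reading (\ref{equation_ddot_nabla_f}) as an equation for $\nabla\ddot f$'' does not work. The right-hand side of (\ref{system_f_v_f_dot_dot}) contains $(Q-L_2L_1^{-1}P)\big(\nabla p\circ\widetilde\eta\big)$, and the interior-pressure piece $\nabla p_0\circ\widetilde\eta$ is only $O(1)$ (Proposition~\ref{nabla_p_bound} gives the bound $C(1+k\p\nabla f\p_{3+\ve})$, and the ``$1$'' does not go away). Integrating in time therefore yields $\p\nabla\dot f\p_0=O(t)$, which is useless for closing a $k^{-9/4-\La}$ bound on a $k$-independent interval. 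The paper instead reads $\p\nabla\dot f\p_0$ directly from the kinetic term $\tfrac{1}{k}\p\nabla\dot f\p_0^2$ on the left side of the energy inequality \emph{after} the iteration has improved the right side; this is where the factor $1/k$ in front of $\p\nabla\dot f\p_0^2$ becomes essential. Then $\p\nabla\dot f\p_1$ follows by interpolating the improved $\p\nabla\dot f\p_0$ against the bootstrap $\p\nabla\dot f\p_{4\half}$. Only the top value $s=4\half$ is obtained, as you say, by time-differentiating the boundary algebraic system.
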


\noindent \emph{Proof for $\p \nabla f \p_s$, $s=0,1,2,2+\ve,3+\ve$:}
We will use an iteration scheme, hence, write assumption
(\ref{bootstrap_eq_1}) as
\begin{align}
 \p \nabla f \p_s \leq \frac{C}{k^{\frac{6-s}{2} + \frac{\de_j(s)}{2}}}, 
\label{bootstrap_eq_1_general}
\end{align}
for $s=0,1,2,2+\ve,3+\ve,6$, where $\frac{\de_j(s)}{2}$ is defined inductively by $\frac{\de_0(s)}{2} = 0$ and
\begin{subequations}
\begin{align}
&\frac{\de_{j+1}(1.5)}{2} = \min\Big \{ \frac{0.5 + \de_{j}(1)}{2}, \frac{ 1.5 + \de_{j}(2+\ve) - \ve}{4} \Big \}, 
\label{delta_def_1.5} \\
 &\frac{\de_{j+1}(1)}{2} = \frac{\de_{j+1}(1.5)}{3} 
\label{delta_def_1} \\
&\frac{\de_{j+1}(2)}{2} = \frac{4\de_{j+1}(1.5)}{9}, 
\label{delta_def_2} \\
&\frac{\de_{j+1}(2+ \ve)}{2} = \frac{(4-\ve) \de_{j+1}(1.5)}{9}, 
\label{delta_def_2_ep} \\
&\frac{\de_{j+1}(3+ \ve)}{2} = \frac{(3 - \ve)\de_{j+1}(1.5)}{9}.
\label{delta_def_3_ep} 
\end{align}
\label{delta_def}
\end{subequations}

Notice that with the exception of the first line, $j+1$, and not $j$, appears on the right hand side of the 
above expressions.

Plugging (\ref{bootstrap_eq_1_general}) into (\ref{energy_estimate_final}) yields
\begin{align}
\begin{split}
  \frac{1}{k}\p \nabla \dot{f} \p_0 ^2+ \p \nabla f \p_\frac{3}{2}^2 & \leq
\frac{C}{k^{5 + \de_j(1)}} + \frac{C}{k^{\frac{10.5}{2} + \frac{\de_j(2)}{2}}} 
\\
& + \frac{C}{ k^{ \frac{10.5 - \ve}{2} + \frac{ \de_j(2+\ve) }{2} } } 
\left( \frac{1}{k^{ \frac{ 5.5 }{2} }} + \frac{1}{ k^{2+ \frac{\de_j(2)}{2}}} +1 + 
\frac{1}{k^{\frac{1}{2} -\frac{\ve}{2} + \frac{\de_j(3+\ve)}{2} } } \right).
\end{split}
\nonumber
\end{align}
For $\ve$ sufficiently small (see remark \ref{rema_epsilon}), the term in parenthesis on the right hand side is bounded by a constant $C$.
It then follows that the second term on the right hand side is bounded by the third term because
from the above definitions one has 
$\frac{10.5 - \ve}{2} + \frac{ \de_j(2+\ve) }{2}  \leq \frac{10.5}{2} + \frac{\de_j(2)}{2}$. Therefore
\begin{align}
\begin{split}
  \frac{1}{k}\p \nabla \dot{f} \p_0^2 + \p \nabla f \p_\frac{3}{2}^2  & \leq
\frac{C}{k^{5 + \de_j(1)}}  + \frac{C}{ k^{ \frac{10.5 - \ve}{2} + \frac{ \de_j(2+\ve) }{2} } } \\
& = \frac{C}{k^{4.5 + 0.5 + \de_j(1)}}  
+ \frac{C}{ k^{ 4.5 +  \frac{ 1.5 + \de_j(2+\ve) -\ve}{2} }  }.
\end{split}
\label{energy_with_k}
\end{align}
In particular,
\begin{gather}
 \p \nabla f \p_\frac{3}{2}  
\leq \frac{C}{k^{\frac{4.5}{2} + \frac{0.5 + \de_j(1)}{2} } }  
+ \frac{C}{ k^{ \frac{4.5}{2} + \frac{ 1.5 + \de_j(2+\ve) -\ve}{4} }  }
\leq \frac{C}{k^{\frac{4.5}{2} + \frac{\de_{j+1}(1.5)}{2} } }.
\label{improved_1.5}
\end{gather}
In order to handle the other values of $s$, recall the interpolation inequality
\begin{gather}
 \p u \p_s^{t-r}  \leq C\p u \p_t^{s-r} \p u \p_r^{t-s}\; {\rm for}~r \leq s \leq t,
\label{interpolation}
\end{gather}
which implies
\begin{gather}
 \p \nabla f \p_s^{t-1.5}  \leq C\p \nabla f \p_t^{s-1.5} \p \nabla f \p_{1.5}^{t-s}\; {\rm for} ~ 1.5 \leq s \leq t \leq 6,
\label{interp_greater}
\end{gather}
and 
\begin{gather}
 \p \nabla f \p_s^{1.5-r}  \leq C\p \nabla f \p_{1.5}^{s-r} \p \nabla f \p_r^{1.5-s} \; {\rm for} ~ 0 \leq r < s < 1.5.
\label{interp_less}
\end{gather}
Using (\ref{bootstrap_eq_1_general}), (\ref{improved_1.5}), and (\ref{interp_greater}) 
 with $t=6$ yields
\begin{align}
 \p \nabla f \p_s \leq \frac{C}{k^{\frac{6-s}{2} + \frac{\de_j(s)}{2}}}, 
\label{improved_greater}
\end{align}
for $s = 2, 2+ \ve, 3 + \ve$, with $\de(2)$, $\de(2+\ve)$ and $\de(3+\ve)$  given by (\ref{delta_def_2}),
(\ref{delta_def_2_ep}) and (\ref{delta_def_3_ep}), respectively.
Then using (\ref{bootstrap_eq_1_general}) and (\ref{improved_1.5}) with (\ref{interp_less}) and letting $r=0$ yields
\begin{align}
 \p \nabla f \p_1 \leq \frac{C}{k^{\frac{5}{2} + \frac{\de_j(1)}{2}}}, 
\label{improved_less}
\end{align}
with $\de(1)$  given by (\ref{delta_def_1}). 

Now one can repeat this argument: given a value of $\de_j(s)$ in 
(\ref{bootstrap_eq_1_general}), estimate (\ref{energy_with_k}) yields 
(\ref{improved_1.5}) with $\de_{j+1}(1.5)$ given by (\ref{delta_def_1.5}); this then gives,
through (\ref{improved_greater}) and (\ref{improved_less}), newly improved values for the remaining  $\de_j(s)$.
One can check from the definitions (\ref{delta_def}) that it is possible 
to iterate this procedure a sufficient number of steps as to obtain
(i) (\ref{bootstrap_improved_eq_1}) with some $\La^\prime$, for $s=1,1.5, 2,2+\ve,3+\ve$; (ii)
$\frac{\de_{j+1}(1.5)}{2} > 1+ \ga$ and $\frac{\de_j(1)}{2} > 1+ \ga$ for some $\ga > 0$.

Now, since $\p \nabla f \p_0 \leq \p \nabla f \p_1$, (\ref{improved_less}) 
gives
\begin{gather}
 \p \nabla f \p_0 \leq \frac{C}{k^{3 + \frac{\ga}{2} } },
\nonumber
\end{gather}
giving the result with $\La = \min\{ \La^\prime, \frac{\ga}{2} \}$.
\hfill $\qed$\\ 

\noindent \emph{Proof for $\p \nabla \dot{f} \p_s$, $s=0,1$.}
Using estimate (\ref{energy_with_k}) and the above,
\begin{gather}
 \p \nabla \dot{f} \p_0 \leq \frac{C k}{k^{\frac{4.5}{2} + \frac{ \de_{j+1}(1.5) }{2} } } 
\leq \frac{C }{k^{\frac{4.5}{2} + \ga } } .
\label{improved_0_dot}
\end{gather}
Using (\ref{improved_0_dot}) with
(\ref{interp_less})
 and letting $r=0$, $s=1$ and $t=4.5$ produces
\begin{gather}
 \p \nabla \dot{f} \p_1 \leq \frac{C}{k^{\frac{3.5}{2} +\frac{3.5}{4.5}\ga} } ,
\nonumber
\end{gather}
giving the result with possibly a new constant $\La$. \hfill $\qed$\\ 

\begin{rema} With (\ref{bootstrap_improved_eq_1}) 
proven for $s=0,1,2,2+\ve,3+\ve$ and with (\ref{bootstrap_improved_eq_2}) proven $s=0,1$, one can apply the interpolation
inequality (\ref{interpolation}) once more in order to obtain
\begin{subequations}
\begin{align}
 & \p \nabla f_k \p_s \leq \frac{\widetilde{C}}{k^{\frac{6-s}{2} + \La} }, ~ s< 6,
\label{bootstrap_improved_more_eq_1} \\
 & \p \nabla \dot{f}_k \p_s \leq \frac{\widetilde{C}}{k^{\frac{4.5-s}{2} + \La}},~ s < 4.5
\label{bootstrap_improved_more_eq_2}
\end{align}
\label{bootstrap_improved_more_eq}
\end{subequations}
\flushleft{possibly} with new constants $\La$ and $\widetilde{C}$. These can be assumed independent of $s$ since we shall use 
(\ref{bootstrap_improved_more_eq}) for only finitely many $s$-values. In particular 
these bounds are valid for integral $s$.
We henceforth assume (\ref{bootstrap_improved_more_eq}).
\end{rema}

\noindent \emph{Proof for $\p \nabla f \p_6$.} From (\ref{algebraic_Df_3_tau})
\begin{gather}
D_\tau^3 f = - 
\frac{r_1 D_\tau^2 D_\nu f + \cA \circ \widetilde{\eta} \, \langle \tau,  W \rangle |D_\tau\widetilde{\eta}|^{-1} + \cR_1}{A}.
\label{D_tau_3_f_improv_6}
\end{gather}
From our constructions and application of (\ref{bilinear}) it is seen that for $\nabla f$ sufficiently small
\begin{align}
\p r_1 \p_{3\half,\partial} +
\p \langle \tau, W \rangle \p_{3\half,\partial} +
 \p \cR_1 \p_{3\half,\partial} \leq C \p D^2 f \p_{3\half,\partial} \leq C \p \nabla f \p_{4\half,\partial}
\leq C \p \nabla f \p_{5}
\label{improv_6_a}
\end{align}
and 
\begin{gather}
 \p | D_\tau \widetilde{\eta} |^{-1} \p_{3\half,\partial} \leq C(1 + \p D^2 f \p_{3\half,\partial} ) 
\leq C(1 + \p \nabla f \p_{5} ) \leq C.
\label{improv_6_b}
\end{gather}
Furthermore, from proposition \ref{prop_estimate_mean_curvature}
with $s = 3$ one obtains
\begin{gather}
 \p \cA \circ \widetilde{\eta} \p_{3\half, \partial} \leq C (1+ \p \nabla
  f \p_{5\half,\partial})
\leq C(1 + \p \nabla f \p_6 ) \leq C,
\label{improv_6_c}
\end{gather}
provided that $\nabla f$ is sufficiently small. We also have
\begin{gather}
 \p D_\tau^2 D_\nu f \p_{3\half,\partial} \leq C \p \nabla f \p_{5\half,\partial} \leq C \p \nabla f \p_6 \leq C.
\label{improv_6_d}
\end{gather}
Using (\ref{improv_6_a}), (\ref{improv_6_b}), (\ref{improv_6_c}) and (\ref{improv_6_d}) along with 
(\ref{bootstrap_improved_more_eq}) and (\ref{D_tau_3_f_improv_6}) yields
\begin{gather}
 \p D_\tau^3 f \p_{3\half, \partial} \leq \frac{C}{k^{\frac{1}{2} + \La}},
\label{estimate_for_bootstrap_D_3_tau_f}
\end{gather}
for some $\La > 0$.

We shall now use equation (\ref{nl_Dirichlet_f_2d}) to estimate $f$. Since this equation is unchanged by subtracting
a constant from $f$, and since our goal is to estimate $\nabla f$, we can assume that
$ \int_{\partial \Om} f = 0$. Then Poincar\'e's inequality implies
\begin{gather}
 \p f \p_{6\half,\partial } \leq C \p D_\tau f \p_{5\half,\partial}.
\nonumber
\end{gather}
But
\begin{gather}
 \p D_\tau f \p_{5\half,\partial} \leq C \left( \p D_\tau f \p_{2,\partial} 
+ \p D^3_\tau f \p_{3\half,\partial} \right) \leq \frac{C}{k^{\frac{1}{2} + \La}} ,
\nonumber
\end{gather}
so that 
\begin{gather}
 \p f \p_{6\half,\partial } \leq \frac{C}{k^{\frac{1}{2} + \La}},
\label{Dirichlet_estimate_6}
\end{gather}
where (\ref{bootstrap_improved_more_eq}) and (\ref{estimate_for_bootstrap_D_3_tau_f}) have been used.
From (\ref{nl_Dirichlet_f_2d}),
\begin{align}
\begin{split}
 \p \nabla f \p_6 & \leq C \p f \p_7 \leq C \Big (
\p \Delta f \p_5 + \p f \p_{6\half,\partial} \Big ) \\
& \leq C \Big (
\p (D^2 f )^2  \p_5 + \p f \p_{6\half,\partial} \Big ).
\end{split}
\label{elliptic_6}
\end{align}
In order to estimate $\p (D^2 f )^2  \p_5 $, let us look at the highest order term
\begin{align}
\begin{split}
 \p D^5 ( D^2 f )^2 \p_0 & \leq C \sum_{\ell=0}^4 \p (D^\ell D^2f) (D^{4-\ell} D^3 f) \p_0  \\
& \leq C \sum_{\ell=0}^4 \p (D^{\ell+1} D f) (D^{6-\ell} D f) \p_0 
\end{split}
\nonumber
\end{align}
Applying (\ref{bilinear}) with $s=1+\ve$, $r=0$ and the term with the $r$-norm on the the right 
of (\ref{bilinear}) being always the one with fewer derivatives in the product
$(D^{\ell+1} D f) (D^{6-\ell} D f)$, leads to
\begin{align}
\begin{split}
 \p D^5 ( D^2 f )^2 \p_0 & \leq C \sum_{\ell=0}^2 \p \nabla f \p_{2+\ell + \ve } \p \nabla f \p_{6-\ell} \\
& \leq \p \nabla f \p_{3+ \ve } \p \nabla f \p_{5}
+ \p \nabla f \p_{4 + \ve } \p \nabla f \p_{4}  + 
\p \nabla f \p_{2+ \ve } \p \nabla f \p_{6} \\
& \leq \frac{C}{k^{\frac{1}{2} + \La}} + \p \nabla f \p_{2+ \ve } \p \nabla f \p_{6}.
\end{split}
\nonumber
\end{align}
The remaining terms in $\p (D^2 f )^2  \p_5 $ are estimated similarly, yielding
\begin{gather}
 \p (D^2 f )^2  \p_5 \leq \frac{C}{k^{\frac{1}{2} + \La}} + \p \nabla f \p_{2+ \ve } \p \nabla f \p_{6}.
\label{D_2_f_sq_estimate}
\end{gather}
Combining 
(\ref{Dirichlet_estimate_6})
(\ref{elliptic_6})
(\ref{D_2_f_sq_estimate}) gives
\begin{gather}
 \p \nabla f \p_6 \leq \frac{C}{k^{\frac{1}{2}+ \La} } + \p \nabla f \p_{2+ \ve } \p \nabla f \p_{6}
\nonumber
\end{gather}
The term $\p \nabla f \p_{2+ \ve } \p \nabla f \p_{6}$ can be absorbed on the left hand side by 
(\ref{bootstrap_improved_more_eq_1}), finishing the proof. \hfill $\qed$ \\

\noindent \emph{Proof for $\p \nabla \dot{f} \p_{4\half}$.} Differentiate 
(\ref{D_tau_3_f_improv_6}) in time and proceed as in the proof of 
$\p \nabla f \p_{6}$.

\section{Proof of theorem \ref{main_theorem}.\label{final_proof}}

We are now in a position to prove theorem \ref{main_theorem}; 
assume therefore all its hypotheses. As we are interested in passing to 
the limit, it will be convenient to reinstate the 
subscript $k$ \footnote{Notice that the operators $L_1$ and $L_2$ also depend
on $k$, but we shall not attach a subscript $k$ to them
as to avoid a cumbersome notation. All we shall need about $L_1$ and $L_2$, in fact,
is that they are bounded operators with norm bounded above by
a constant independent of $k$  --- a condition that clearly
 holds for $k$ large.}.
The $t$-dependence will also be written when we want to stress
the interval where certain estimate is valid. We shall
make  use of proposition \ref{Sobolev_composition}, so 
we remind the reader that $J(\beta_k) = 1$. \\

\noindent \emph{Proof of theorem \ref{main_theorem}:}
For each $k$, let $T_k$ be the maximum interval of existence
for (\ref{free_boundary_full}). Since $\zeta(0) = \id = \eta_k(0) $ and $\dot{\eta}(0) = u_0 = \dot{\eta}_k$,
\begin{align}
\p \zeta(t) - \eta_k(t) \p_{4\half} & = 
\p \zeta(t) - \zeta(0) + \eta_k(0) - \eta_k(t) \p_{4\half} \nonumber
\\
& \leq \p \zeta(t) - \zeta(0) \p_{4\half} + \p \eta_k(t) - \eta_k(0) \p_{4\half},
\nonumber
\end{align}
and
\begin{align}
\p \dot{\zeta}(t) - \dot{\eta}_k(t) \p_{4\half} & = 
\p \dot{\zeta}(t) - \dot{\zeta}(0) + \dot{\eta}_k(0) - \dot{\eta}_k(t) \p_{4\half} \nonumber
\\
& \leq \p \dot{\zeta}(t) - \dot{\zeta}(0) \p_{4\half} + \p \dot{\eta}_k(t) - \dot{\eta}_k(0) \p_{4\half}
\nonumber
\end{align}
As $\zeta$, $\dot{\zeta}$, $\eta_k$ and $\dot{\eta}_k$ are 
continuous functions of $t$,
given $\varrho>0$ we can choose $\overline{T}_k = \overline{T}_k(\varrho)$ 
such that 
\begin{align}
\p \zeta(t) - \eta_k(t) \p_{4\half}  < \varrho, \text{  on } [0,\overline{T}_k],
\label{eta_close_zeta}
\end{align}
and
\begin{align}
\p \dot{\zeta}(t) - \dot{\eta}_k(t) \p_{4\half}  < \varrho, \text{  on } [0,\overline{T}_k].
\label{eta_close_zeta_dot}
\end{align}
Since $\zeta$ exists for all time, there exists a constant $M$ such that 
\begin{gather}
\p \zeta(t) \p_{4\half} + \p \dot{\zeta}_k(t) \p_{4\half}  \leq M, \text{ for } t \leq \overline{T}_k
\label{bound_zeta} 
\end{gather}
Combining (\ref{eta_close_zeta}),
(\ref{eta_close_zeta_dot}) and (\ref{bound_zeta}) yields
\begin{gather}
\p \eta_k(t) \p_{4\half} + \p \dot{\eta}_k(t)  \p_{4\half}
\leq 2M + 2\varrho = M_\varrho,
\text{  on } [0,\overline{T}_k].
\label{bound_eta_eta_dot}
\end{gather}
By proposition \ref{decomposition_more_regular}, the
equations (\ref{bootstrap_eq}) hold on a time interval
$[0,\cT_k]$, and shrinking $\cT_k$ if necessary we can assume $\cT_k \leq \overline{T}_k$. 
Proposition
\ref{prop_energy_bootstrap}
then implies that equations (\ref{bootstrap_improved_eq}) hold
on $\cT_k$. Therefore from (\ref{def_f_beta}) ,  (\ref{bound_eta_eta_dot}) 
and proposition \ref{Sobolev_composition} it follows
\begin{align}
\p \beta_k (t)\p_{4\half} \leq M_\varrho
+ C \p \nabla f_k (t) \p_{4\half}(1 + \p \beta_k  (t) \p_{4\half} ), \text{  on } 
[0,\cT_k].
\nonumber
\end{align}
Because of (\ref{bootstrap_improved_eq_1}), if $k$ is sufficiently large, 
the term 
$ C \p \nabla f_k (t) \p_{4\half} \, \p \beta_k (t) \p_{4\half} $ can be absorbed on the left
hand side and therefore
\begin{align}
\p \beta_k (t)  \p_{4\half} \leq C M_\varrho
+ C \p \nabla f_k  (t) \p_{4\half} \leq C, \text{  on } 
[0,\cT_k],
\label{bound_beta}
\end{align}
where   (\ref{bootstrap_improved_eq_1}) has been used to 
bound $\p \nabla f_k  (t) \p_{4\half}$.
Then, using 
(\ref{bound_beta}) along with proposition \ref{Sobolev_composition} and
 (\ref{beta_inverse_Sobolev}), (\ref{dot_eta_f_beta}), (\ref{eta_eta_dot_bounded}), (\ref{bilinear}), 
 (\ref{bootstrap_improved_eq_2}) gives
\begin{align}
\begin{split}
\p v_k(t) \p_{4\half} & \leq \p \dot{\eta}_k(t)  \circ \beta_k^{-1}(t) \p_{4\half}
 + \p D_{v_k(t)} \nabla f_k(t) \p_{4\half} + \p \nabla \dot{ f}_k(t) \p_{4\half} \\
 & \leq C + \p v_k(t) \p_{4\half} \, \p \nabla f_k (t) \p_{5\half},
 \text{  on } 
[0,\cT_k]. 
  \end{split}
  \nonumber
\end{align}
As before, the last term can be absorbed on the left hand side if $k$ is large 
enough because of (\ref{bootstrap_improved_eq_1}), so that 
\begin{align}
\begin{split}
\p v_k(t) \p_{4\half} & \leq C, \text{  on } 
[0,\cT_k].
  \end{split}
\label{bound_v_45}
\end{align}
From  (\ref{bound_beta}), (\ref{bound_v_45}) and proposition
 \ref{Sobolev_composition} it  immediately follows that 
\begin{align}
\begin{split}
\p \dot{\beta}_k(t) \p_{4\half} & \leq C, \text{  on } 
[0,\cT_k].
  \end{split}
\label{bound_beta_dot}
\end{align}
Therefore, invoking again (\ref{def_f_beta}) we obtain,
with the help of proposition \ref{Sobolev_composition},
(\ref{bound_beta}) and (\ref{bootstrap_improved_eq_1}),
\begin{align}
\p \eta_k(t) -\beta_k(t) \p_{4\half} \leq C \p \nabla f_k(t) \p_{4\half}
\leq \frac{C}{k^\La}, \text{  on } 
[0,\cT_k],
\label{eta_minus_beta}
\end{align}
and from (\ref{dot_eta_f_beta}), (\ref{bound_v_45}), (\ref{bilinear}), 
proposition \ref{Sobolev_composition}, (\ref{bootstrap_improved_eq_1})
and (\ref{bootstrap_improved_eq_2}) we have, after recalling that
$\dot{\beta}_k(t) = v_k(t) \circ \beta_k(t)$,
\begin{align}
\p \dot{\eta}_k(t) -\dot{\beta}_k(t) \p_{4\half} \, \leq C \p \nabla f_k(t) \p_{5\half}
+  C \p \nabla \dot{f}_k(t) \p_{4\half} \leq \frac{C}{k^\La}, \text{  on } 
[0,\cT_k],
\label{eta_minus_beta_dot}
\end{align}
where $\La$ comes from proposition \ref{prop_energy_bootstrap}.
From (\ref{eta_minus_beta}) and (\ref{eta_minus_beta_dot}) we see that in 
order to obtain the desired convergence it will suffice to show that 
$\beta_k(t) \rar \zeta(t)$ and $\dot{\beta}_k(t) \rar \dot{\zeta}(t)$. We proceed
therefore to analyze the equation of motion  for $\beta_k(t)$, i.e., 
(\ref{system_f_v_v_dot}).
Differentiating $\dot{\beta}_k = v_k \circ \beta_k$ in time,
\begin{gather}
\ddot{\beta}_k = ( \dot{v}_k + \nabla_{v_k} v_k ) \circ \beta_k.
\label{beta_v_Lagran}
\end{gather}
But since
\begin{gather}
\dot{v}_k +  P(\nabla_{v_k} v_k ) =
\dot{v}_k +  (I - Q) (\nabla_{v_k} v_k )
\nonumber
\end{gather}
composing (\ref{system_f_v_v_dot}) with $\beta_k$ on the right and 
using (\ref{beta_v_Lagran}) yields
\begin{align}
\ddot{\beta}_k = 
\big (Q(\nabla_{\dot{\beta}_k\circ \beta_k^{-1} }
\dot{\beta}_k\circ \beta_k^{-1}) \big ) \circ \beta_k
- 2 \big ( L_1^{-1}P\nabla_{v_k}
 \nabla\dot{f_k} \big ) \circ \beta_k
  -  \big ( L_1^{-1} P D^2_{v_k v_k} \nabla f_k  \big ) \circ \beta_k .
 \label{eq_beta_prior}
\end{align}
We recall that the pressure $p$ appearing in (\ref{Euler_edo}) is
determined by the fluid velocity $\vartheta = \dot{\zeta} \circ \zeta^{-1}$.
In fact, exactly as in (\ref{beta_v_Lagran}), we have
$\ddot{\zeta} = ( \dot{\vartheta} + \nabla_\vartheta \vartheta )\circ \zeta$,
so that (\ref{Euler_edo}) becomes
\begin{gather}
\dot{\vartheta} + \nabla_\vartheta \vartheta = -\nabla p.
\label{Euler_eq_Eulerian}
\end{gather}
Taking divergence of (\ref{Euler_eq_Eulerian}), using 
$\dive(\dot{\vartheta}) = 0$ and recalling the definition of $Q$, 
(\ref{Euler_edo}) can be written as
\begin{gather}
\ddot{\zeta} =
\big ( Q( \nabla_{\dot{\zeta}\circ \zeta^{-1} } \dot{\zeta}\circ \zeta^{-1} ) \big ) \circ \zeta.
 \label{Euler_Lagran}
\end{gather}
Furthermore, since $\dot{\beta}_k\circ \beta_k^{-1}$
and $\dot{\zeta}\circ \zeta^{-1}$ are on the image of $P$, 
we see that 
(\ref{eq_beta_prior}) and (\ref{Euler_Lagran}) can also be written as
\begin{align}
\ddot{\beta}_k = 
Z(\beta_k, \dot{\beta}_k)
+ \cR_k,
 \label{eq_beta}
\end{align}
and
\begin{gather}
\ddot{\zeta} = Z(\zeta, \dot{\zeta}),
 \label{eq_zeta}
\end{gather}
where $Z$ is the operator defined by
\begin{align}
\begin{split}
& Z: \cD_\mu^{4\half} (\Om) \times H^{4\half}(\Om) \rar H^{4\half}(\Om), \\
& Z(\al, v) = 
\Big( Q \big (  \nabla_{v \circ \al^{-1} } P( v \circ \al^{-1} ) \big ) \Big )\circ \al .
\end{split}
\nonumber
\end{align}
and $\cR_k$ is given by
\begin{gather}
\cR_k(t) = 
- 2 \big ( L_1^{-1}P\nabla_{v_k}
 \nabla\dot{f_k} \big ) \circ \beta_k
  -  \big ( L_1^{-1} P D^2_{v_k v_k} \nabla f_k  \big ) \circ \beta_k .
\nonumber
\end{gather}
$Z$ is well defined since if $w$ is a $H^s$ divergence free vector field, then
the gradient part of $\nabla_w w$, i.e., $Q(\nabla_w w)$ is also in $H^s$ 
and not merely in $H^{s-1}$ as one would expect at first sight. This follows from the
definition of $Q$ since the divergence free condition implies that 
$\dive(\nabla_w w)$ depends on only first derivatives of $w$; see \cite{EM} 
for details.

Integrating equations (\ref{eq_beta}) and (\ref{eq_zeta}), subtracting, 
and using that $\dot{\beta}_k(0) = u_0 = \dot{\zeta}(0)$ 
(because  $\nabla \dot{f}_k(0) = 0$) gives
\begin{align}
\p \dot{\beta}_k(t)  -\dot{\zeta}(t) \p_{4\half} &
\leq \int_0^t \p Z(\beta_k,\dot{\beta}_k)(s) - Z(\zeta, \dot{\zeta})(s) \p_{4\half}
\, ds + \int_0^t \p \cR_k (s) \p_{4\half} \, ds,
\label{beta_minus_zeta_1}
\end{align}
We need to show that $\p \cR_k \p_{4\half}$ is small. For this, first notice that
\begin{gather}
P D_{v_k} \nabla \dot{f}_k = [P, D_{v_k} ] \nabla \dot{f}_k 
+ D_{v_k} P \nabla f = [P, D_{v_k} ] \nabla \dot{f}_k ,
\label{P_nabla}
\end{gather}
where $[\cdot, \cdot ]$ is the
commutator and we used that $P \circ \nabla = 0$.
Recall that if $\cL_1$ and $\cL_2$ are differential operators
of orders $\ell_1$ and $\ell_2$, respectively, then 
their commutator $[\cL_1,\cL_2]$ has order
$\ell_1 + \ell_2 - 1$. Therefore 
$[P,D_{v_k}]$ is a zeroth order operator and hence (\ref{P_nabla})
can be estimated in $H^{4\half}$ with the help of (\ref{bilinear}):
\begin{gather}
\p P D_{v_k} \nabla \dot{f}_k \p_{4\half} \leq C \p v_k \p_{4\half}
\p \nabla \dot{f}_k \p_{4\half}.
\label{P_nabla_bound}
\end{gather}
Analogously, $[P,D_{v_k v_k} ]$ is first order and therefore
\begin{gather}
\p P D_{v_k v_k} \nabla f_k\p_{4\half} \leq C \p v_k \p_{4\half}^2 
\p \nabla f_k \p_{5\half},
\label{P_D_vv_bound}
\end{gather}
where we used again (\ref{bilinear}) and the fact that the operator
$D_{v_k v_k}$ does not depend on derivatives of $v_k$ (see
(\ref{D_vv_two})).  Combining (\ref{P_nabla_bound}) and 
(\ref{P_D_vv_bound}) with  (\ref{bound_beta}), (\ref{bound_v_45}),
(\ref{bootstrap_improved_eq}), and proposition \ref{Sobolev_composition} 
we obtain 
\begin{gather}
\p \cR_k(t) \p_{4\half} \leq \frac{C}{k^\La}, \text{  on }
[0,\cT_k].
\label{estimate_error_Euler}
\end{gather}
Then, from (\ref{estimate_error_Euler}) and  (\ref{beta_minus_zeta_1}), 
it follows
\begin{align}
\p \dot{\beta}_k(t)  -\dot{\zeta}(t) \p_{4\half} &
\leq 
\frac{C}{k^\La} + \int_0^t \p Z(\beta_k,\dot{\beta}_k)(s) - Z(\zeta, \dot{\zeta})(s) \p_{4\half} \, ds, \text{  on } [0,\cT_k].
\label{beta_minus_zeta_2}
\end{align}
In  \cite{E2} it is proven that the map $Z$ is smooth. Since 
$(\zeta(t), \dot{\zeta}(t) )\big( [0,\cT_k] \big )$ is 
compact in $\cD_\mu^{4\half}(\Om) \times H^{4\half}(\Om)$,
the map $Z$ is uniformly Lipschitz in a small neighborhood of the curve
$(\zeta(t), \dot{\zeta}(t) )$. Therefore, because of 
(\ref{eta_close_zeta}), (\ref{eta_close_zeta_dot}), (\ref{eta_minus_beta}) 
and 
(\ref{eta_minus_beta_dot}), if $\varrho$ and $\cT_k$ are sufficiently small
and $k$ sufficiently large, 
\begin{align}
\begin{split}
\p Z(\beta_k,\dot{\beta}_k)(t) - Z(\zeta, \dot{\zeta})(t) \p_{4\half} 
& \leq C \p \beta_k(t) - \zeta(t) \p_{4\half} + 
C \p \dot{\beta}_k(t) - \dot{\zeta}(t) \p_{4\half} 
\text{  on } [0,\cT_k].
\end{split}
\label{Lip_estimate}
\end{align}
But,
\begin{align}
 \p \beta_k(t) - \zeta(t) \p_{4\half} & \leq 
 \int_0^t  \p \dot{\beta}_k(s) - \dot{\zeta}(s) \p_{4\half} \, ds.
 \label{beta_zeta_integral}
\end{align}
Using (\ref{Lip_estimate}) and  (\ref{beta_zeta_integral}) 
into (\ref{beta_minus_zeta_2}) we obtain, after iterating the
 inequality (\ref{beta_minus_zeta_2}), that
 \begin{align}
 \p \beta_k(t) - \zeta(t) \p_{4\half}  +  \p
 \dot{\beta}_k(t) - \dot{\zeta}(t) \p_{4\half}
& \leq \frac{C}{k^\La} e^{C t}, \text{   on } [0,\cT_k].
\label{estimate_distance_beta_zeta}
\end{align}
Recall now that the time interval was defined by the validity of 
(\ref{bootstrap_eq}). Proposition 
\ref{prop_energy_bootstrap} shows that this implies that a better estimate, namely,
(\ref{bootstrap_improved_eq}), holds on $[0,\cT_k]$, and thus  
if $k$ is sufficiently large, $\cT_k$ is a non-decreasing function 
of $k$, as long the solution $\eta_k(t)$ exists on that interval.
But now, since the solution $\zeta$ exists for all time, it follows from
(\ref{estimate_distance_beta_zeta}), equations (\ref{eq_zeta}), 
(\ref{eq_beta}) and (\ref{estimate_error_Euler}), that the interval of existence 
of $\beta_k(t)$ is also non-decreasing with $k$ (for $k$ large), and then 
the same is true for $\eta_k(t)$ by (\ref{eta_minus_beta}) and (\ref{eta_minus_beta_dot}). We conclude that there exists  a $T>0$, independent
of $k$, such that $\eta_k(t)$ exists on $[0,T]$  for all $k$,
and such that (after combining (\ref{estimate_distance_beta_zeta}) with
(\ref{eta_minus_beta}) and (\ref{eta_minus_beta_dot})),
\begin{align}
 \sup_{0 \leq t \leq T} \p \eta_k(t) - \zeta(t) \p_{4\half}  
 +  \sup_{0 \leq t \leq T} \p  \dot{\eta}_k(t) - \dot{\zeta}(t) \p_{4\half}
\, \longrightarrow  0 \text{  as } k \rar \infty,
\nonumber
\end{align}
finishing the proof. \hfill $\qed$

\subsection{Final remarks.\label{final_remarks}}
Here we expand on several comments made in the introduction,
and make some further remarks.

We start by pointing out how the theorem can be generalized to include 
different initial conditions $u_{0 k}$, not necessarily tangent to the boundary, 
but which converge suitably to an initial velocity $u_0$ of problem
(\ref{Euler}). In this case, we would have
\begin{gather}
u_{0k} = P(u_{0k}) + Q(u_{0k}) 
= v_{0k} + \nabla \dot{f}_{0k}.
\nonumber
\end{gather}
We then have to choose
\begin{gather}
v_{0 k } - u_0 \sim \frac{1}{k^A} \text{ \hskip 0.3cm and \hskip 0.3cm }
\nabla \dot{f}_{0k}  \sim \frac{1}{k^A},
\nonumber
\end{gather}
in the appropriate norms. It is not difficult to see then that
if $A$ is large, the estimates we presented will still go through. In fact, in the 
energy estimates, there will be an additional term $\widetilde{u}_{0k}$
coming from $\nabla \dot{f}_k(0)$, which can then be grouped
with the other terms in $\frac{1}{k}$ without affecting the result, 
provided that $A$ is suitably chosen for this purpose.
Similarly, when we considered 
$\p \dot{\zeta}(t) - \dot{\eta}_k(t) \p_{4\half}$
and $\p \dot{\zeta}(t) - \dot{\beta}_k(t) \p_{4\half}$ in the previous proof, 
we would have extra terms
of the form $\p u_0 - ( v_{0k} + \nabla \dot{f}_{0k} ) \p_{4\half}$
and $\p u_0 -  v_{0k} \p_{4\half}$, respectively, which again will be negligible errors
of the order $\frac{1}{k^A}$.

The importance of having only one boundary component with constant mean
curvature at time zero can be seen from 
the boundary condition for the pressure $p$. $p$ is explicitly coupled to $\eta$ via 
its gradient. The ``boundary part'' of $p$ is given by the harmonic extension $\cA_H$ of the 
the mean curvature $\cA$, and therefore $\nabla \cA_H = 0$ when $\cA$ is constant so one can expect $\nabla \cA_H$
to be small for small time. 

We point out that our method can also be used to prove well-posedness in
$\ccE_\mu^s(\Om)$. This is done by using an iteration scheme for the system
(\ref{system_f_v_full}). The iteration will converge if $f$ is sufficiently small,
which, due to the 
estimates of section \ref{space_smoother_emb}, will be the case provided that $\de_0$ in theorem \ref{embedding} 
is small enough. This will be shown in a future work \cite{in_prep}.

We finally comment on the higher dimensional case. There seems to be no conceptual 
problem in generalizing our proofs to three and higher dimensions, the difficulties being
mostly computational. Notice that in section \ref{final_proof} we did use the fact that solutions to 
(\ref{Euler}) exist for all time, a feature unique to those equations in two spatial dimensions, 
but this is superfluous. All we need there is a fixed time interval $[0,T_*]$ in which
(\ref{Euler}) has a unique solution (see \cite{E2, E4, ED} where a similar 
convergence argument is used). In fact, we expect that most of the 
arguments here presented
can be generalized to higher dimensions in a more or less straightforward manner. The part
that presents substantial difficulties is the derivation of the energy estimates of section
\ref{energy_esimates_section}, in particular those dealing with the mean curvature of the boundary.
The simplicity of the mean curvature operator for one-dimensional boundaries was employed
to a great extent in section \ref{energy_esimates_section}. The expressions will become much 
harder to handle in three and higher dimensions, as it should be expected from usual
difficulties posed by equations involving mean curvature.

\end{document}